\def\subsection{\@startsection{subsection}{2}%
  \z@{.5\linespacing\@plus.7\linespacing}{.5\linespacing}%
  {\normalfont\bfseries}}
\def\@defaultbiblabelstyle#1{[#1]}
\def\@setauthors{%
  \begingroup
  \def\thanks{\protect\thanks@warning}%
  \trivlist
  \centering\footnotesize \@topsep30\p@\relax
  \advance\@topsep by -\baselineskip
  \item\relax
  \author@andify\authors
  \def\\{\protect\linebreak}%
%  \MakeUppercase{\authors}%
  \authors%
  \ifx\@empty\contribs
  \else
    ,\penalty-3 \space \@setcontribs
    \@closetoccontribs
  \fi
  \endtrivlist
  \endgroup
}
\def\@settitle{\begin{center}%
  \baselineskip14\p@\relax
    \bfseries
%\uppercasenonmath\@title
  \@title
  \end{center}%
}
\newtheorem{theorem}{Theorem}[section]
\newtheorem{lemma}[theorem]{Lemma}
\newtheorem{proposition}[theorem]{Proposition}
\newtheorem{corollary}[theorem]{Corollary}
\theoremstyle{definition}
\newtheorem{definition}[theorem]{Definition}
\theoremstyle{remark}
\newtheorem{remark}[theorem]{Remark}
\numberwithin{equation}{section}
\begin{document}

\title{On exterior powers of reflection representations, II}

\author{Hongsheng Hu}
\address{School of Mathematics, Hunan University, Changsha 410082, China}
\email{huhongsheng@hnu.edu.cn}
\urladdr{\url{https://huhongsheng.github.io/}
         \newline \indent {\rmfamily {\itshape ORCID}:} \href {https://orcid.org/0000-0002-1850-056X} {0000-0002-1850-056X}}

\subjclass[2020]{Primary 15A75; Secondary 05E10, 20C15, 20F55, 51F15}

\keywords{exterior powers, generalized reflections, reflection representations, Coxeter groups}

\date{January 28, 2025}

\begin{abstract}
Let $W$ be a group endowed with a finite set $S$ of generators.
A representation $(V,\rho)$ of $W$ is called a reflection representation of $(W,S)$ if $\rho(s)$ is a (generalized) reflection on $V$ for each generator $s \in S$.
In this paper, we prove that for any irreducible reflection representation $V$, all the exterior powers $\bigwedge ^d V$, $d = 0, 1, \dots, \dim V$, are irreducible $W$-modules, and they are non-isomorphic to each other.
This extends a theorem of R.\ Steinberg which is stated for Euclidean reflection groups.
Moreover, we prove that the exterior powers (except for the 0th and the highest power) of two non-isomorphic reflection representations always give non-isomorphic $W$-modules.
This allows us to construct numerous pairwise non-isomorphic irreducible representations for such groups, especially for Coxeter groups.
\end{abstract}

\maketitle

\setcounter{tocdepth}{2}
\tableofcontents

\section{Introduction} \label{sec-intro}

\subsection{Overview}

In \cite[Section 14]{Steinberg1968}, R.\ Steinberg proved a theorem stating that the exterior powers of the irreducible reflection representation of a Euclidean reflection group are again irreducible and pairwise non-isomorphic (see also \cite[Chapter V, Section 2, Exercise 3]{Bourbaki2002}).
For Weyl groups, the exterior powers of the standard reflection representation are well studied (see, for example, \cite{CIK71, GP00, Henderson10, Sommers11}).

The proof of Steinberg's theorem relies on the existence of an inner product which stays invariant under the group action.
In a previous paper \cite{Hu23-ext-pow}, the author extended Steinberg's result to a more general context where the inner product may not exist.
Let $W$ be a group and $S= \{s_1, \dots, s_k\}$ be a set of generators of $W$.
We say a representation $\rho: W \to \operatorname{GL}(V)$ is a reflection representation of $(W,S)$ if each of the generators $s_i$ acts by a generalized reflection, and denote by $\alpha_i$ the chosen reflection vector (see Subsection \ref{subsec-refl} for related notions).
The main theorem in \cite{Hu23-ext-pow} reads:

\begin{theorem}[{\cite[Theorem 1.2]{Hu23-ext-pow}}] \label{thm-pre}
  Let $(V,\rho)$ be an $n$-dimensional irreducible reflection representation of $(W,S)$ over a field $\mathbb{F}$ of characteristic 0, with reflection vectors $\alpha_1, \dots, \alpha_k$.
  Suppose
  \begin{equation}\label{eq-cond-remove}
    \text{for any two indices $i,j$, $s_i \cdot \alpha_j \ne \alpha_j$ if and only if $s_j \cdot \alpha_i \ne \alpha_i$}.
  \end{equation}
  Then the $W$-modules $\{\bigwedge^d V \mid 0 \le d \le n\}$ are irreducible and pairwise non-isomorphic.
\end{theorem}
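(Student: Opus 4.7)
The plan is to tackle the two conclusions --- irreducibility and pairwise non-isomorphism --- separately, with irreducibility being the main issue and non-isomorphism following from simple eigenvalue counts once irreducibility is in hand.

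For irreducibility, my starting point is the eigenspace decomposition of each $s_i$ on $\bigwedge^d V$. Since $s_i$ acts on $V$ with eigenvalue $\lambda_i \ne 1$ on $\mathbb{F}\alpha_i$ and eigenvalue $1$ on the fixed hyperplane $H_i$, one has
\[
  \bigwedge^d V = \bigwedge^d H_i \;\oplus\; \bigl(\alpha_i \wedge \bigwedge^{d-1} H_i\bigr),
\]
where the two summands are the $1$- and $\lambda_i$-eigenspaces of $s_i$, respectively. Any $W$-invariant subspace $U$ of $\bigwedge^d V$ inherits this decomposition, and applying the projector $(s_i - 1)/(\lambda_i - 1)$ to an element of $U$ yields, when nonzero, an element of $U$ of the form $\alpha_i \wedge \xi$ with $\xi \in \bigwedge^{d-1} H_i$. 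This ``extraction of an $\alpha_i$-factor'' is the fundamental move.

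The strategy is then to show by induction on $d$ that any nonzero $W$-submodule $U$ must contain a spanning family of wedge products $\alpha_{i_1} \wedge \cdots \wedge \alpha_{i_d}$; since $V$ is irreducible, the $\alpha_i$'s span $V$, so such wedges span $\bigwedge^d V$ and force $U = \bigwedge^d V$. Starting from an extracted element $\alpha_{i_1} \wedge \eta \in U$, one repeatedly applies generators $s_j$ for which $s_j \alpha_{i_k} \ne \alpha_{i_k}$ and subtracts off the original vector, producing new elements of $U$ which carry a fresh $\alpha_j$ as a wedge factor. Condition (1.1) is essential here: it supplies the symmetry needed so that propagation between indices is two-sided, allowing the iteration to spread across the full index set. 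I expect the main technical obstacle to be the combinatorial bookkeeping needed to certify that enough such propagations occur to span $\bigwedge^d V$, rather than letting the iteration stay trapped in some proper $s_i$-invariant subspace.

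For pairwise non-isomorphism, the eigenvalues of each $s_i$ on $\bigwedge^d V$ are $1$ and $\lambda_i \ne 1$ with multiplicities $\binom{n-1}{d}$ and $\binom{n-1}{d-1}$, respectively. These two multiplicities together already determine $d$ uniquely in $\{0, 1, \dots, n\}$, so distinct exterior powers produce distinct traces on every single generator. Since in characteristic $0$ the character of an irreducible representation determines it, pairwise non-isomorphism follows once irreducibility is in hand.
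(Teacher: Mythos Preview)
Your treatment of pairwise non-isomorphism is essentially the paper's (Lemma~\ref{lem-ext-num}): the eigenvalue multiplicities $\binom{n-1}{d}$ and $\binom{n-1}{d-1}$ of a single $s_i$ on $\bigwedge^d V$ already pin down $d$. Note that you do not need irreducibility or character theory here; two representations on which one fixed group element has different eigenvalue multiplicities are non-isomorphic, simple or not.

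Your plan for irreducibility, however, diverges from the paper and carries a real gap. The paper (Section~\ref{sec-main1}, which proves the stronger Theorem~\ref{thm-main1} and hence the present statement) does \emph{not} attack an arbitrary nonzero submodule $U$ directly. It first invokes the blanket fact (Lemma~\ref{lem-ss}, using $\operatorname{char}\mathbb F=0$) that $\bigwedge^d V$ is semisimple whenever $V$ is simple, and then shows $\operatorname{End}_W\bigl(\bigwedge^d V\bigr)=\mathbb F$. The endomorphism route succeeds because any $\varphi$ commuting with every $s_i$ preserves each eigenspace $V^-_{d,i}$ \emph{individually}, hence preserves each one-dimensional intersection $\bigcap_j V^-_{d,i_j}=\mathbb F\,\alpha_{i_1}\wedge\cdots\wedge\alpha_{i_d}$; thus $\varphi$ scales every pure basis wedge, and a one-step computation along an arrow of the associated digraph forces adjacent scalars to coincide (weak connectedness finishes).

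Your direct approach stalls at exactly the point this trick handles for free. A submodule $U$ does split as $(U\cap V^+_{d,i})\oplus(U\cap V^-_{d,i})$ for each $i$, but $U\cap V^-_{d,i_1}$ is \emph{not} $s_{i_2}$-stable in general (the reflections do not commute), so you cannot iterate the projectors $(s_i-1)/(\lambda_i-1)$ to reach $\bigcap_j V^-_{d,i_j}$ while staying inside $U$. Your ``induction on $d$'' does not repair this: the factor $\eta$ lives in $\bigwedge^{d-1}H_i$, and $H_i$ is not a $W$-module, so there is no smaller reflection representation on which to invoke an inductive hypothesis. Consequently you have given no mechanism for placing even one pure wedge $\alpha_{i_1}\wedge\cdots\wedge\alpha_{i_d}$ inside $U$, and without that your propagation step---which is essentially the paper's ``move'' argument and is itself sound---never gets off the ground. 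The detour through semisimplicity and endomorphisms is precisely what replaces this missing step.
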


As pointed out in \cite{Hu23-ext-pow}, usually there is no $W$-invariant bilinear form on the reflection representation, so that our result is not a trivial generalization.

The first aim of this paper is to show that the assumption \eqref{eq-cond-remove} can be removed:

\begin{theorem} \label{thm-main1}
  Let $(V,\rho)$ be an $n$-dimensional irreducible reflection representation of $(W,S)$ over a field $\mathbb{F}$ of characteristic 0.
  Then the $W$-modules $\{\bigwedge^d V \mid 0 \le d \le n\}$ are irreducible and pairwise non-isomorphic.
\end{theorem}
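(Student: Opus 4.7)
The plan is to reduce Theorem~\ref{thm-main1} to Theorem~\ref{thm-pre} by exhibiting, for any irreducible reflection representation $(V,\rho)$ of $(W,S)$, a generating set $S'$ of $W$ such that $(V,\rho)$ remains a reflection representation of $(W,S')$ and such that condition~\eqref{eq-cond-remove} holds with respect to $S'$. Since the $W$-module structure on each $\bigwedge^d V$ depends only on the pair $(W,\rho)$ and not on the choice of generators, once such an $S'$ is produced the previous theorem applies and the conclusion follows.

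The first step is to enlarge $S$ to the set $\tilde S := \{g \in W : \rho(g) \text{ is a generalized reflection on } V\}$. This $\tilde S$ contains $S$, is closed under $W$-conjugation, and hence generates $W$. For each $g \in \tilde S$ write $\rho(g)(v) = v + \lambda_g(v)\,\alpha_g$, with $\alpha_g$ spanning the non-trivial eigenline and $\ker \lambda_g$ the fixed hyperplane. In this language \eqref{eq-cond-remove} becomes the zero-symmetry condition $\lambda_s(\alpha_t) = 0 \iff \lambda_t(\alpha_s) = 0$ for all $s,t \in S'$.

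The second step is to select a generating subset $S' \subseteq \tilde S$ on which this zero-symmetry holds. Elementary two-dimensional examples already show that $\tilde S$ itself may fail the symmetry, so one cannot simply take $S' = \tilde S$. Instead, I would build $S'$ from $S$ by iteratively inserting carefully chosen conjugates to repair each asymmetric pair, using the irreducibility of $V$, which forces $\{\alpha_g : g \in \tilde S\}$ to span $V$, to supply the needed flexibility.

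The main obstacle I anticipate is the compatibility of these local repairs: an asymmetric pair $(s,t)$ with $\lambda_s(\alpha_t)\ne 0$ and $\lambda_t(\alpha_s)=0$ cannot be repaired merely by passing to the conjugate $sts^{-1}$, since a short computation shows the analogous asymmetry persists for the pair $(s,\,sts^{-1})$. Should no clean uniform choice of $S'$ exist, the fallback plan is to revisit the proof of Theorem~\ref{thm-pre}, localize each appeal to \eqref{eq-cond-remove}, and bypass it by invoking a third reflection in $\tilde S$ whose reflection vector restores the required symmetric behavior against both members of the problematic pair, again leveraging irreducibility of $V$ to guarantee that such a mediating reflection is available.
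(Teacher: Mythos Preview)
Your proposal is a strategy outline rather than a proof, and the central step---constructing a finite generating set $S'$ on which \eqref{eq-cond-remove} holds---is never carried out. You yourself note that the obvious repair by conjugation fails, and ``iteratively inserting carefully chosen conjugates'' is not an argument until you specify which conjugates and prove the process terminates with the symmetry restored. There is also a structural obstacle you do not address: the set $\tilde S$ of all elements acting as reflections need not be finite (think of an infinite Coxeter group), whereas both Definition~\ref{def-refl-rep} and Theorem~\ref{thm-pre} require a \emph{finite} generating set; so any $S'\subseteq\tilde S$ you extract must be shown to be finite and still generate $W$. In short, the reduction to Theorem~\ref{thm-pre} is not established, and the example in the remark at the end of Section~\ref{sec-main1} (a two-dimensional representation whose associated digraph on $\{1,2,3\}$ is a directed $3$-cycle) suggests that producing such an $S'$ is genuinely delicate, if possible at all.

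The paper's proof takes a different route that avoids \eqref{eq-cond-remove} entirely rather than trying to restore it. The key observation is that one does not need the symmetric condition: it suffices to find a subset $I\subseteq[k]$ such that $\{\alpha_i : i\in I\}$ is a basis of $V$ and the associated digraph $G_I$ is merely \emph{weakly} connected (Lemma~\ref{lem-GI}). Semisimplicity of $\bigwedge^d V$ reduces irreducibility to showing $\operatorname{End}_W(\bigwedge^d V)=\mathbb{F}$; any endomorphism must scale each one-dimensional common eigenspace $\mathbb{F}\,\alpha_{i_1}\wedge\cdots\wedge\alpha_{i_d}$, and a single arrow $i\to j$ in $G_I$ already forces the scalars for $\{i_1,\ldots,i_d\}$ and for the set obtained by replacing $i$ with $j$ to agree. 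Weak connectivity then propagates equality of all scalars via Lemma~\ref{lem-move}. Your ``fallback'' of revisiting the proof of Theorem~\ref{thm-pre} and bypassing each appeal to \eqref{eq-cond-remove} is in spirit close to this, but as stated it is too vague to constitute a proof.
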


The readers may find that the proof of Theorem \ref{thm-main1} is similar to that of Theorem \ref{thm-pre} (\cite[Theorem 1.2]{Hu23-ext-pow}).
The proof here simplifies the proof in \cite{Hu23-ext-pow} a little bit.
See Section \ref{sec-main1} for more details.

The major contribution of this paper is the second main result, stating that
the exterior powers of two different reflection representations are also different.
To be precise, we have

\begin{theorem} \label{thm-main2}
  Let $(V_\iota, \rho_\iota)$, $\iota = 1, 2$, be two irreducible reflection representations of $(W,S)$ over a field $\mathbb{F}$ of characteristic 0, with dimensions $n_1$ and $n_2$ respectively.
  Suppose $\bigwedge^{d_1} V_1 \simeq \bigwedge^{d_2} V_2$ as $W$-modules for some integers $d_1, d_2$ with $1 \le d_\iota \le n_\iota - 1$ $(\iota = 1, 2)$.
  Then $d_1 = d_2$, $n_1 = n_2$, and $V_1 \simeq V_2$ as $W$-modules.
\end{theorem}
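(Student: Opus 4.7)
The plan is to proceed in three stages: first match the eigenvalue data of each generator via characteristic polynomials, then force $n_1 = n_2$ and $d_1 = d_2$ by a combinatorial lemma, and finally reconstruct enough of the Cartan-like structure of $V_\iota$ from the character on longer words to conclude $V_1 \simeq V_2$.

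For stage one: the assumed $W$-isomorphism implies the characteristic polynomial of each $s_i$ agrees on both sides. Since $s_i$ acts on $V_\iota$ as a generalized reflection with unique nontrivial eigenvalue $\zeta_i^{(\iota)} \ne 1$, this characteristic polynomial factors as $(x - 1)^{\binom{n_\iota - 1}{d_\iota}} (x - \zeta_i^{(\iota)})^{\binom{n_\iota - 1}{d_\iota - 1}}$; matching the two distinct-root factorizations forces $\zeta_i^{(1)} = \zeta_i^{(2)} =: \zeta_i$ together with the multiplicity equalities $\binom{n_1 - 1}{d_1} = \binom{n_2 - 1}{d_2}$ and $\binom{n_1 - 1}{d_1 - 1} = \binom{n_2 - 1}{d_2 - 1}$. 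For stage two: the ratio of these identities yields $n_1/d_1 = n_2/d_2$, and combined with $\binom{n_1}{d_1} = \binom{n_2}{d_2}$ (forced by equal dimensions), a short monotonicity argument on $m \mapsto \binom{pm}{qm}$ for $m \ge 1$ with coprime $p > q \ge 1$ (using $\binom{p(m+1)}{q(m+1)} \ge \binom{pm}{qm} \binom{p}{q} > \binom{pm}{qm}$ via Vandermonde) gives $n_1 = n_2 = n$ and $d_1 = d_2 = d$.

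For stage three (the crux), write $s_i \cdot \alpha_j^{(\iota)} = \alpha_j^{(\iota)} - c_{ij}^{(\iota)} \alpha_i^{(\iota)}$; the irreducible reflection representation $V_\iota$ is then determined up to $W$-isomorphism by the scalars $(\zeta_i)$ and the matrix $(c_{ij}^{(\iota)})$ modulo the gauge rescalings $\alpha_i^{(\iota)} \mapsto \lambda_i \alpha_i^{(\iota)}$, which replace $c_{ij}^{(\iota)}$ by $(\lambda_j/\lambda_i) c_{ij}^{(\iota)}$; the complete set of gauge invariants consists of the directed cycle products $c_{i_1 i_2}^{(\iota)} c_{i_2 i_3}^{(\iota)} \cdots c_{i_k i_1}^{(\iota)}$. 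I would extract these inductively on $k$ by comparing the trace of $s_{i_1} \cdots s_{i_k}$ on $\bigwedge^d V_\iota$ (for $\iota = 1, 2$): this trace is an explicit polynomial in the $\zeta_{i_l}$ and the $c_{i_l i_m}^{(\iota)}$ obtained by expanding $e_d$ over the eigenvalues on the (at most $k$-dimensional) span $\langle \alpha_{i_1}^{(\iota)}, \ldots, \alpha_{i_k}^{(\iota)} \rangle$ together with $1$'s from the complementary fixed subspace, and after subtracting the contributions of shorter cycles already handled by induction, the new $k$-cycle invariant appears with a coefficient that is a nonzero binomial in $n$, $d$, $k$ (nonvanishing for $1 \le d \le n-1$), so it must coincide for $\iota = 1, 2$. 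Once all cycle invariants agree, a standard holonomy argument (matrices with prescribed diagonal and matching directed cycle products are diagonally conjugate, provided the support graph is connected, which follows from the irreducibility of $V_\iota$) produces scalars $\lambda_i$ with $c_{ij}^{(1)} = c_{ij}^{(2)}$, and then $\alpha_i^{(1)} \mapsto \lambda_i \alpha_i^{(2)}$ extends to a $W$-equivariant isomorphism $V_1 \simeq V_2$. The main obstacle is the combinatorial bookkeeping in this trace computation, particularly handling degenerate configurations where the span $\langle \alpha_{i_1}^{(\iota)}, \ldots, \alpha_{i_k}^{(\iota)} \rangle$ has dimension less than $k$ (corresponding to dependent reflection vectors or vanishing $c_{ij}^{(\iota)}$); such degenerations must be tracked uniformly on both sides, which is forced consistently by the inductive structure of the argument.
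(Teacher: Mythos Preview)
Your first two stages match the paper's own argument (Proposition~\ref{prop-num} together with Lemmas~\ref{lem-binom1} and \ref{lem-binom2}): comparing the eigenvalue multiplicities of a single generator on the two exterior powers gives the binomial identities, and an elementary monotonicity argument forces $n_1=n_2$, $d_1=d_2$, and $\lambda_i=\mu_i$.

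Stage three, however, diverges from the paper and has real gaps. First, your parametrization of reflection representations by $(\zeta_i, c_{ij})$ modulo gauge is incomplete when $k=\lvert S\rvert > n$: since the reflection vectors $\alpha_j$ are then linearly dependent, you must also control the linear relations among them to pin down the representation. If $\{\alpha_i : i \in I\}$ is a basis and $h \notin I$, the coefficients in $\alpha_h = \sum_{l \in I} a_l \alpha_l$ are extra data not visible in your cycle products, and showing these coefficients match (after gauging) is exactly the content of the paper's Lemma~\ref{lem-W-map} and Proposition~\ref{prop-W-map-2}. Second, your holonomy step presupposes that the two support digraphs coincide, i.e.\ $c_{ij}^{(1)} = 0 \iff c_{ij}^{(2)} = 0$, yet directed cycle products cannot detect this: if $c_{ij}^{(1)} = 0$ then every directed cycle through $(i,j)$ has product zero irrespective of the other entries, and since the digraph need not be symmetric (dropping condition~\eqref{eq-cond-remove} is precisely the advance of this paper over its predecessor) you cannot fall back on the $2$-cycle product $c_{ij}c_{ji}$ to separate the two possible vanishings. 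Third, your trace extraction is only a sketch: the assertion that the $k$-cycle invariant appears with a nonzero binomial coefficient after subtracting shorter contributions is the heart of the matter and is not verified.

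The paper avoids all of this by working not with traces but with the given isomorphism $\psi : \bigwedge^d V_1 \to \bigwedge^d V_2$ itself. Since $\psi$ must carry the one-dimensional joint eigenspace $\mathbb{F}\,\alpha_{i_1}\wedge\cdots\wedge\alpha_{i_d}$ to $\mathbb{F}\,\beta_{i_1}\wedge\cdots\wedge\beta_{i_d}$, one obtains scalars $\zeta_{i_1,\ldots,i_d}$; applying $s_j$ to such a wedge and comparing with $\psi(s_j\cdot{-})$ yields directly the identity $\zeta_{i,i_2,\ldots,i_d}\,y_{ij} = \zeta_{j,i_2,\ldots,i_d}\,x_{ij}$ (Lemma~\ref{lem-same-digraph-2}), which simultaneously forces the digraphs to coincide (Corollary~\ref{cor-x-y}) and supplies the gauge ratios needed to define $f(\alpha_i) = z_i\beta_i$. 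The same eigenspace bookkeeping also transfers linear independence of reflection vectors from $V_1$ to $V_2$ (Propositions~\ref{prop-corr-basis} and \ref{prop-tran-lin-ind}), which is what handles the case $k>n$ cleanly.
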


\begin{remark}
  Note that $\bigwedge^0 V$ is the one-dimensional $W$-module with trivial $W$-action. While $\bigwedge^n V$ carries the one-dimensional representation $\det \circ \rho$ for any $n$-dimensional representation $(V,\rho)$ of $W$, and different $\rho$'s might share the same determinant $\det \circ \rho$ (for example, if each generator $s_i$ is of order two, then $\det \circ \rho (s_i) = -1$ for any reflection representation $(V,\rho)$ and any $i$).
  Thus, in Theorem \ref{thm-main2} the range $1 \le d_\iota \le n_\iota -1$ is the best we can expect.
\end{remark}

By combining the results in Theorems \ref{thm-main1} and \ref{thm-main2}, immediately we have the following corollary, which allows us to construct numerous pairwise non-isomorphic irreducible representations for the group $W$.

\begin{corollary}
  Suppose we have a family of irreducible reflection representations $\{V_i \mid i \in I\}$ of $(W,S)$.
  Then
  $\{\bigwedge^d V_i \mid i \in I,  1 \le d \le \dim V_i - 1\}$
  is a family of simple $W$-modules, and they are pairwise non-isomorphic.
\end{corollary}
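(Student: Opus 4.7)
The plan is to derive this corollary as an immediate bookkeeping consequence of Theorems \ref{thm-main1} and \ref{thm-main2}. I would first flag the convention: the family $\{V_i\}_{i \in I}$ should be read as consisting of pairwise non-isomorphic $W$-modules (otherwise the conclusion fails trivially by repeating an isomorphism class). With this in place, the proof splits cleanly into two parts.

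The first step is to establish \emph{simplicity}. For each fixed $i \in I$ and each $0 \le d \le \dim V_i$, Theorem \ref{thm-main1} asserts that $\bigwedge^d V_i$ is an irreducible $W$-module; in particular this covers the range $1 \le d \le \dim V_i - 1$ appearing in the statement.

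The second step is \emph{pairwise non-isomorphism}. Suppose
\[ \bigwedge^{d_1} V_{i_1} \simeq \bigwedge^{d_2} V_{i_2} \]
as $W$-modules, where $1 \le d_\iota \le \dim V_{i_\iota} - 1$ for $\iota = 1, 2$. I would split into two cases. If $i_1 = i_2 =: i$, then Theorem \ref{thm-main1} says that the exterior powers $\bigwedge^d V_i$, $0 \le d \le \dim V_i$, are pairwise non-isomorphic, forcing $d_1 = d_2$. If instead $i_1 \ne i_2$, the hypotheses of Theorem \ref{thm-main2} hold verbatim, so its conclusion gives $V_{i_1} \simeq V_{i_2}$, contradicting the convention on the family. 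Hence $(i_1, d_1) = (i_2, d_2)$, which is exactly what we need.

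There is no genuine obstacle here; the corollary is essentially a restatement that packages the two main theorems together. The only delicate point is making the implicit non-isomorphism convention on $\{V_i\}_{i \in I}$ explicit at the outset, so that the case $i_1 \ne i_2$ in the argument above has content.
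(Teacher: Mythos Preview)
Your proposal is correct and matches the paper's approach: the paper states the corollary as an immediate consequence of Theorems \ref{thm-main1} and \ref{thm-main2} without giving a separate proof, and your explicit case split is exactly how one unpacks that sentence. Your remark that the family $\{V_i\}_{i\in I}$ must implicitly consist of pairwise non-isomorphic representations is a valid and useful clarification.
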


\subsection{Motivation and application}

The motivation of this work (as well as the previous \cite{Hu23-ext-pow}) comes as follows.
Suppose $W$ is a Coxeter group with the finite set $S$ of defining generators.
In another paper \cite{Hu23} by the author, all the reflection representations (over $\mathbb{C}$) of $(W,S)$ are determined.
The most essential thing in this process  is the classification of isomorphism classes of the so-called \emph{generalized geometric representations} (that is, those reflection representations admitting  a basis formed by the reflection vectors).
In \cite{Hu23}, such representations are classified using the characters of the first integral homology group of simple graphs which are closely related to the Coxeter graph.
Moreover, ``most'' of them are irreducible.
While if a generalized geometric representation is reducible, then it has a semisimple quotient, each of whose direct summand is an irreducible reflection representation of some parabolic subgroup.
Therefore, the results in this paper are applicable, and then we obtain a large class of irreducible representations which are non-isomorphic to each other.

For example, if $(W,S)$ is the affine Weyl group of type $\widetilde{A}_n$, that is,
\[W = \langle s_0, s_1, \dots, s_n \mid s_i^2 = (s_is_{i+1})^2 = e, \forall i = 0,1, \dots, n\rangle\]
(regard $n+1$ as $0$), then the Coxeter graph is a cycle.
The corresponding first homology group with integral coefficients is isomorphic to $\mathbb{Z}$, and its characters are parameterized by $\mathbb{C}^\times$, and so are the generalized geometric representations.
For $x \in \mathbb{C}^\times$, the corresponding generalized geometric representation, denoted by $V_x$, is an ($n+1$)-dimensional $\mathbb{C}$-vector space with basis $\{\alpha_0, \alpha_1, \dots, \alpha_n\}$, and the $W$-action can be defined by
\begin{align*}
  s_i \alpha_i & = - \alpha_i, \quad \forall i = 0,1,\dots, n; \\
  s_0 \alpha_n & = \alpha_n + x \alpha_0; \\
  s_n \alpha_0 & = \alpha_0 + \frac{1}{x} \alpha_n; \\
  s_i \alpha_{i+1} = s_{i+1} \alpha_i & = \alpha_i + \alpha_{i+1}, \quad \forall i = 0,1, \dots, n-1;\\
  s_i \alpha_j & = \alpha_j, \quad \text{if $i \ne j$ and $i, j$ are not adjacent.}
\end{align*}
All of these representations are irreducible except when $x = 1$. 
If $x = 1$, the representation $V_1$ is nothing but the geometric representation in the sense of \cite[Chapter V, Section 4]{Bourbaki2002}, and it admits an $n$-dimensional simple quotient $V_1 / U$ where $U := \langle \alpha_0 + \alpha_1 + \cdots + \alpha_n \rangle$.
The quotient $V_1 / U$ is also a reflection representation.
Applying Theorems \ref{thm-main1} and \ref{thm-main2} yields uncountably many simple modules for the affine Weyl group $W$:
\[\Bigl\{\bigwedge^d V_x \Bigm| 1 \le d \le n, x \in \mathbb{C}^\times \setminus \{1\}\Bigr\} \cup \Bigl\{\bigwedge^d (V_1 / U) \Bigm| 1 \le d \le n-1 \Bigr\}.\]

\subsection{Outline of this paper}

The paper is organized as follows.
In Section \ref{sec-pre} we recollect the basic definitions and some preliminary results.
In Section \ref{sec-ext}, we recollect some basic results on exterior powers of reflection representations.
In Sections \ref{sec-main1} and \ref{sec-main2} we prove Theorems \ref{thm-main1} and \ref{thm-main2} respectively.

\section{Preliminaries} \label{sec-pre}

Throughout this paper, we work over a field $\mathbb{F}$ of characteristic 0.
We require $\operatorname{char} \mathbb{F} = 0$ only to ensure the exterior powers of an irreducible representation are semisimple (see Remark \ref{rmk-ss}).
In fact, the notions of reflections and reflection representations can be defined over fields of arbitrary characteristic.

For any positive integer $k$, we denote $[k] := \{1,2, \dots, k\}$.
For a fixed representation $\rho: W \to \operatorname{GL}(V)$ and an element $s \in W$, we also denote simply by $s$ the linear map $\rho(s) \in \operatorname{GL}(V)$ if there is no ambiguity.

\subsection{Reflections and reflection representations} \label{subsec-refl}

\begin{definition}[{\cite[Definition 2.1]{Hu23-ext-pow}}] \label{def-refl}
  Let $V$ be a finite-dimensional vector space  over $\mathbb{F}$.
  \begin{enumerate}
    \item A linear map $s:V \to V$ is called a  \emph{generalized reflection} (and \emph{reflection} for short) if $s$ is diagonalizable and $\operatorname{rank}(s - \operatorname{Id}_V) = 1$.
    \item Suppose $s$ is a reflection on $V$.
        The hyperplane $H_s : = \ker (s - \operatorname{Id}_V)$, which is fixed pointwise by $s$, is called the \emph{reflection hyperplane} of $s$.
        Let $\alpha_s$ be a nonzero vector in $\operatorname{Im}(s - \operatorname{Id}_V)$.
        Then, $s \cdot \alpha_s = \lambda_s \alpha_s$ for some $\lambda_s \in \mathbb{F} \setminus \{1\}$, and $\alpha_s$ is called a \emph{reflection vector} of $s$.
  \end{enumerate}
  Note that if $s$ is an invertible map, then $\lambda_s \ne 0$.
\end{definition}

The following lemma is immediate.

\begin{lemma} [{\cite[Lemma 2.2]{Hu23-ext-pow}}] \label{lem-refl}
  Let $s$ be a reflection on $V$ and $\alpha_s$ be a reflection vector.
  Then there exists a nonzero linear function $f : V \to \mathbb{F}$ such that $s \cdot v = v + f(v) \alpha_s$ for any $v \in V$.
\end{lemma}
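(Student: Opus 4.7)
The plan is to read $f$ off directly from the rank-one operator $s - \operatorname{Id}_V$. By Definition~\ref{def-refl}(2), $\operatorname{Im}(s - \operatorname{Id}_V)$ is a one-dimensional subspace of $V$, and $\alpha_s$ is a nonzero element of it, hence spans it. Consequently, for every $v \in V$ the vector $(s - \operatorname{Id}_V)(v)$ lies on the line $\mathbb{F} \alpha_s$, so there is a unique scalar, which I would denote $f(v)$, with $(s - \operatorname{Id}_V)(v) = f(v)\alpha_s$. Rearranging yields $s \cdot v = v + f(v)\alpha_s$, which is the identity claimed in the lemma; the construction also shows $f$ is the unique such function, once $\alpha_s$ is fixed.

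What remains is to check two properties of $f$. Linearity is automatic: uniqueness of $f(v)$ means $f$ is the composition of the linear map $s - \operatorname{Id}_V : V \to \mathbb{F}\alpha_s$ with the inverse of the linear isomorphism $\mathbb{F} \to \mathbb{F}\alpha_s$, $c \mapsto c\alpha_s$, which is well-defined since $\alpha_s \ne 0$. For $f \not\equiv 0$, I would use that $\operatorname{rank}(s - \operatorname{Id}_V) = 1 \ne 0$, so some $v \in V$ satisfies $(s - \operatorname{Id}_V)(v) \ne 0$, and then necessarily $f(v) \ne 0$.

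I do not anticipate any real obstacle here; the lemma is essentially a reformulation of the rank-one condition built into Definition~\ref{def-refl}, with $f$ playing the role of the coordinate on the line $\mathbb{F}\alpha_s$. It is worth noting in passing that the diagonalizability assumption on $s$ plays no part in this argument --- only the rank-one condition is used --- and that the eigenvalue $\lambda_s$ appearing in Definition~\ref{def-refl}(2) can be recovered afterwards via $f(\alpha_s) = \lambda_s - 1$.
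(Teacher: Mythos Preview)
Your argument is correct and is exactly the kind of direct verification one expects: the paper itself does not spell out a proof here, simply noting that the lemma is immediate and citing \cite[Lemma~2.2]{Hu23-ext-pow}. Your observation that only the rank-one condition (not diagonalizability) is needed, and that $f(\alpha_s) = \lambda_s - 1$, are both accurate side remarks.
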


The main object of our study, reflection representation, is defined as follows.

\begin{definition} \label{def-refl-rep}
  Let $W$ be a group endowed with a finite set of generators $S = \{ s_1, \dots, s_k\}$.
  A representation $(V,\rho)$ of $W$ over $\mathbb{F}$ is called a \emph{reflection representation of} $(W,S)$ if the linear map $\rho(s_i) \in \operatorname{GL}(V)$ is a reflection on $V$ for any $i \in [k]$.
\end{definition}

\subsection{Digraphs}

Digraphs will be helpful to investigate the structure of reflection representations.
In what follows we recall some relevant basic definitions.

By definition, a \emph{directed graph} (or \emph{digraph} for short) $G = (I,A)$ consists of a set $I$ of vertices and a set $A$ of arrows, where each arrow in $A$ is an ordered binary subset $(i,j)$ of $I$.
We also denote by $i \to j$ the arrow $(i,j)$.
For our purpose, we only consider finite digraphs without loops and multiple arrows, that is, (1) $I$ is a finite set, (2) there is no arrow of the form $i \to i$ and (3) each arrow $i \to j$ occurs at most once in $A$.

Suppose $i,j \in I$ are two vertices of a digraph $G$.
A \emph{walk} in $G$ from $i$ to $j$ is a sequence of vertices
\[i = i_0, \quad i_1, \quad \dots, \quad i_{l-1}, \quad i_l = j\]
such that $i_{m-1} \to i_m$ is an arrow in $A$ for each $m \in [l]$.

An \emph{undirected walk} in $G$ from $i$ to $j$ is an alternating sequence
\[i = i_0, \quad a_1, \quad i_1, \quad a_2, \quad i_2, \quad \dots, \quad i_{l-1}, \quad a_l, \quad i_l = j\]
of vertices $i_0, i_1, \dots, i_l \in I$ and arrows $a_1, \dots, a_l \in A$ such that either $a_m = i_{m-1} \to i_m$ or $a_m = i_m \to i_{m-1}$ for each $m \in [l]$.

A digraph $G$ is called \emph{weakly connected} if for any two vertices $i,j$ there exists an undirected walk from $i$ to $j$.
In other words, $G$ is weakly connected if the undirected graph obtained by forgetting the directions of all arrows in $A$ is connected.
Moreover, $G$ is called \emph{strongly connected} if for any two vertices $i,j$ there exists two walks, one from $i$ to $j$ and the other from $j$ to $i$.

Suppose $J \subset I$ is a subset of the vertices of $G$.
We define a digraph $G(J)$, called the \emph{sub-digraph spanned by $J$}, to be the digraph $(J, A(J))$ with the set $J$ of vertices, and the set $A(J) : = \{i \to j \mid i,j \in J, \text{ and } i \to j \text{ is an arrow in } A\}$ of arrows.

\begin{definition} \label{def-move}
  Let $G = (I,A)$ be a digraph and $J, J' \subseteq I$ be subsets of vertices.
  Suppose there exist vertices $i \in J$ and $j \in J'$ such that $i \to j$ is an arrow in $A$ and $J \setminus \{i\} = J' \setminus \{j\}$.
  Then we say $J'$ is obtained from $J$ by a \emph{move-forward}, and $J$ is obtained from $J'$ by a \emph{move-back}.
  We also say uniformly that $J$ or $J'$ is obtained from the other by a \emph{move}.

  Intuitively, we obtain $J'$ from $J$ by moving the vertex $i$ to the vertex $j$ along the arrow $i \to j$.
\end{definition}

The following lemma is essentially \cite[Lemma 4.3]{Hu23-ext-pow}.

\begin{lemma} \label{lem-move}
  Let $G=(I,A)$ be a weakly connected digraph.
  Let $J, J' \subseteq I$ be two subsets with the same cardinality.
  Then $J'$ can be obtained from $J$ by finite steps of moves.
\end{lemma}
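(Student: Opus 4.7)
The plan is to reduce the statement to a token-sliding argument on the undirected skeleton of $G$. First I would observe that a single move, whether forward or backward, amounts to moving a token from a vertex in the current subset to an adjacent empty vertex along an arrow, with the orientation of the arrow being immaterial: a move-forward uses an arrow $i \to j$ with $j \notin J$, whereas a move-back uses the same arrow with $i \notin J$. Hence forward and backward moves together correspond to sliding an (indistinguishable) token along any arrow of $G$ to an empty endpoint, and I will use this symmetry freely in what follows.

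The heart of the argument will be the following sub-lemma: if $J \subseteq I$, $i \in J$, and $j \in I \setminus J$, then there is a finite sequence of moves transforming $J$ into $(J \setminus \{i\}) \cup \{j\}$. I would prove this by induction on the length $d$ of an undirected path $i = v_0, v_1, \ldots, v_d = j$ in $G$, which exists because $G$ is weakly connected. Let $m$ be the least index with $v_m \notin J$, so that $v_1, \ldots, v_{m-1} \in J$ while $v_m \notin J$. I would then perform $m$ consecutive slides from the tail end: first move the token at $v_{m-1}$ onto the empty $v_m$, then move $v_{m-2}$ onto the now-empty $v_{m-1}$, and so on, finally moving $v_0 = i$ onto $v_1$. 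Because the path is simple, no vertex is revisited, so at each step the source is still occupied and the target has just been emptied. After these $m$ moves the resulting subset is $(J \setminus \{i\}) \cup \{v_m\}$, and since the truncated path from $v_m$ to $j$ has length $d - m < d$, induction closes the sub-lemma.

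Given the sub-lemma, the main statement will follow by induction on $|J \setminus J'|$ (which equals $|J' \setminus J|$ since $|J| = |J'|$). If this quantity is zero then $J = J'$ and nothing is to be done. Otherwise I pick $i \in J \setminus J'$ and $j \in J' \setminus J$ and apply the sub-lemma to obtain a sequence of moves from $J$ to $J'' := (J \setminus \{i\}) \cup \{j\}$, after which $|J'' \setminus J'| = |J \setminus J'| - 1$, and the induction hypothesis handles the pair $(J'', J')$.

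The main subtlety I anticipate is that one cannot in general slide $i$ directly one step along the chosen path, because the next vertex may already lie in $J$. The trick is to locate the first empty vertex along the path and to cascade the slides backwards from it down to $i$, which both makes room for each successive step and leaves all tokens outside the path segment untouched. Once this cascade is set up correctly, verifying that each intermediate configuration is legal is only routine bookkeeping.
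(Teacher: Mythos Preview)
Your argument is correct. The paper does not actually prove this lemma: it simply observes that one may forget the orientations of the arrows and then invokes \cite[Lemma 4.3]{Hu23-ext-pow} for the resulting undirected statement. Your write-up therefore supplies exactly what the paper omits, namely a self-contained token-sliding proof on the undirected skeleton, with the cascade trick handling the case where the next vertex on the path is already occupied. This is the standard way to establish the cited lemma, so your approach is not different in spirit---it is just the explicit version of the reference the paper points to.
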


\begin{proof}
  Forgetting the directions of arrows in $A$, this lemma follows from \cite[Lemma 4.3]{Hu23-ext-pow}.
\end{proof}

Digraphs and reflection representations are related via the following definition.

\begin{definition} \label{def-assoc-digr}
  Let $W$ be a group endowed with a finite set of generators $S = \{ s_1, \dots, s_k\}$, and $(V,\rho)$ be a reflection representation of $(W,S)$.
  For each $i \in [k]$, let $\alpha_i$ be an arbitrarily chosen reflection vector of $s_i$.
  For any subset $I \subseteq [k]$, we define the \emph{associated digraph} $G_I$ to be a digraph $(I,A)$ where $I$ is the set of vertices and
  \[A := \{i \to j \mid i, j \in I, s_j \cdot \alpha_i \ne \alpha_i\}\]
  is the set of arrows.
  We also denote simply by $G$ the associated digraph $G_{[k]}$.

  Clearly, for subsets $J \subseteq I \subseteq [k]$, the digraph $G_J$ is the sub-digraph $G_I(J)$ of $G_I$ spanned by $J$.
\end{definition}

Immediately we have the following fact about the associated digraph.

\begin{lemma} \label{lem-assoc-digr}
  If $i \to j$ is an arrow in the digraph $G_I$, then $\alpha_j$ belongs to the subrepresentation generated by $\alpha_i$.
\end{lemma}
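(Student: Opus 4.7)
The plan is to invoke Lemma \ref{lem-refl} applied to the reflection $s_j$ with chosen reflection vector $\alpha_j$. This yields a nonzero linear functional $f_j : V \to \mathbb{F}$ such that $s_j \cdot v = v + f_j(v) \alpha_j$ for every $v \in V$. Specializing to $v = \alpha_i$ gives
\[
  s_j \cdot \alpha_i = \alpha_i + f_j(\alpha_i) \alpha_j.
\]

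The next step is to exploit the hypothesis that $i \to j$ is an arrow of $G_I$. By Definition \ref{def-assoc-digr}, this means precisely $s_j \cdot \alpha_i \ne \alpha_i$, so the above identity forces $f_j(\alpha_i) \ne 0$. Hence we may solve for $\alpha_j$:
\[
  \alpha_j = \frac{1}{f_j(\alpha_i)} \bigl( s_j \cdot \alpha_i - \alpha_i \bigr).
\]

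Finally, let $U \subseteq V$ denote the subrepresentation generated by $\alpha_i$, i.e.\ the smallest $W$-stable subspace containing $\alpha_i$. By definition $\alpha_i \in U$ and $s_j \cdot \alpha_i \in U$, so the right-hand side of the above expression lies in $U$, giving $\alpha_j \in U$ as desired. There is no real obstacle here; the lemma is essentially a bookkeeping observation translating the combinatorial arrow condition back into a linear-algebraic containment, and the only ingredient beyond the definitions is Lemma \ref{lem-refl}.
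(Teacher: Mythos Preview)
Your proof is correct and follows essentially the same approach as the paper: both invoke Lemma~\ref{lem-refl} to write $s_j \cdot \alpha_i - \alpha_i$ as a nonzero multiple of $\alpha_j$ (using the arrow condition $s_j \cdot \alpha_i \ne \alpha_i$), and observe that this difference lies in the subrepresentation generated by $\alpha_i$. Your version is slightly more explicit in naming the functional $f_j$ and the subrepresentation $U$, but the argument is identical.
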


\begin{proof}
  By definition, we have $s_j \cdot \alpha_i \ne \alpha_i$.
  In view of Lemma \ref{lem-refl}, the vector $\alpha_i - s_j \cdot \alpha_i$ is a nonzero multiple of $\alpha_j$.
  But this vector lies in the subrepresentation generated by $\alpha_i$.
\end{proof}

\subsection{Some numerical lemmas}

We will need the following lemmas.

\begin{lemma} \label{lem-binom1}
  Let $n_1, n_2, d_1, d_2$ be positive integers and $1 \le d_\iota \le n_\iota - 1$ for $\iota = 1,2$.
  Suppose $\frac{d_1}{n_1} = \frac{d_2}{n_2}$ and $\binom{n_1}{d_1} = \binom{n_2}{d_2}$.
  Then $n_1 = n_2$ and $d_1 = d_2$.
\end{lemma}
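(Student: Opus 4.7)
The plan is to fix the common ratio $d_1/n_1 = d_2/n_2$ and show that, when this ratio is held constant, the value of the binomial coefficient $\binom{n}{d}$ is strictly increasing in $n$. Since the hypothesis forces the two binomials to be equal, this strict monotonicity will immediately give $n_1 = n_2$ and hence $d_1 = d_2$.

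First, I would write the common ratio in lowest terms: let $p/q = d_1/n_1 = d_2/n_2$ with $\gcd(p,q) = 1$. The bounds $1 \le d_\iota \le n_\iota - 1$ translate to $1 \le p \le q-1$, so in particular $q \ge 2$. Then there exist positive integers $a_1, a_2$ with $n_\iota = a_\iota q$ and $d_\iota = a_\iota p$, and the problem reduces to showing that if
\[
\binom{a_1 q}{a_1 p} = \binom{a_2 q}{a_2 p},
\]
then $a_1 = a_2$.

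For this it suffices to prove that, with $p$ and $q$ fixed as above, the function $a \mapsto \binom{aq}{ap}$ is strictly increasing in the positive integer $a$. I would establish this via the Vandermonde identity
\[
\binom{(a+1)q}{(a+1)p} \;=\; \sum_{k} \binom{aq}{k}\binom{q}{(a+1)p - k},
\]
keeping only the single summand corresponding to $k = ap$:
\[
\binom{(a+1)q}{(a+1)p} \;\ge\; \binom{aq}{ap}\binom{q}{p}.
\]
Because $1 \le p \le q-1$ and $q \ge 2$, we have $\binom{q}{p} \ge q \ge 2$, so $\binom{(a+1)q}{(a+1)p} \ge 2\binom{aq}{ap} > \binom{aq}{ap}$. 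Strict monotonicity then forces $a_1 = a_2$, and the lemma follows.

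There is not really a serious obstacle here; the only mildly nontrivial observation is that one should not try to compare the two binomials directly through ratios of factorials (which gets messy), but rather isolate a single Vandermonde term to get a clean multiplicative lower bound. Once that is in place, the bound $\binom{q}{p} \ge 2$ (which is where the hypothesis $1 \le d_\iota \le n_\iota -1$ enters, via $q \ge 2$) closes the argument.
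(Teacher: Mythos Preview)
Your argument is correct. You reduce to showing that $a \mapsto \binom{aq}{ap}$ is strictly increasing, and the Vandermonde bound $\binom{(a+1)q}{(a+1)p} \ge \binom{aq}{ap}\binom{q}{p} \ge 2\binom{aq}{ap}$ does the job cleanly.

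The paper's proof is different and somewhat more elementary. It first uses the symmetry $\binom{n}{d} = \binom{n}{n-d}$ (together with the fact that the common ratio forces both $d_\iota$ to lie on the same side of $n_\iota/2$) to reduce to the case $d_\iota \le n_\iota/2$, and then, assuming $n_1 < n_2$ (hence $d_1 < d_2$), chains two standard monotonicity facts:
\[
\binom{n_2}{d_2} > \binom{n_2}{d_1} > \binom{n_1}{d_1}.
\]
The first inequality uses that $\binom{n_2}{d}$ is increasing in $d$ for $d \le n_2/2$; the second that $\binom{n}{d_1}$ is increasing in $n$ for $d_1 \ge 1$. No Vandermonde and no reduction to lowest terms are needed.

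Your route has the advantage of giving a quantitative bound (the binomial at least doubles at each step along the ray), while the paper's is shorter and relies only on the most basic monotonicity properties of Pascal's triangle.
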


\begin{proof}
  Without loss of generality, we may assume $n_1 \le n_2$ and $d_\iota \le \frac{n_\iota}{2}$ for $\iota = 1, 2$.
  Suppose  $n_1 < n_2$.
  Then $d_1 < d_2$.
  Then we have $\binom{n_2}{d_2} > \binom{n_2}{d_1} > \binom{n_1}{d_1}$ which is a contradiction.
  Therefore, $n_1 = n_2$ and hence $d_1  = d_2$.
\end{proof}

\begin{lemma} \label{lem-binom2}
  Let $n_1, n_2, d_1, d_2$ be positive integers and $1 \le d_\iota \le n_\iota - 1$ for $\iota = 1,2$.
  Suppose
  \begin{equation}\label{eq-binom2-1}
    \binom{n_1 - 1}{d_1} = \binom{n_2 - 1}{d_2}
  \end{equation}
  and
  \begin{equation}\label{eq-binom2-2}
    \binom{n_1 - 1}{d_1 - 1} = \binom{n_2 - 1}{d_2 - 1}.
  \end{equation}
  Then $n_1 = n_2$ and $d_1 = d_2$.
\end{lemma}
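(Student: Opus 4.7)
The plan is to reduce Lemma \ref{lem-binom2} to the previous Lemma \ref{lem-binom1} by extracting two pieces of information from the given equalities: the value of $\binom{n_\iota}{d_\iota}$ and the ratio $d_\iota/n_\iota$.

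First I would add the two given equalities \eqref{eq-binom2-1} and \eqref{eq-binom2-2}. By Pascal's rule,
\[
\binom{n_\iota - 1}{d_\iota} + \binom{n_\iota - 1}{d_\iota - 1} = \binom{n_\iota}{d_\iota},
\]
so we obtain $\binom{n_1}{d_1} = \binom{n_2}{d_2}$.

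Next I would divide \eqref{eq-binom2-2} by \eqref{eq-binom2-1}. Note that both sides of \eqref{eq-binom2-1} and \eqref{eq-binom2-2} are strictly positive because $1 \le d_\iota \le n_\iota - 1$ guarantees $0 \le d_\iota - 1$ and $d_\iota \le n_\iota - 1$. A direct computation gives
\[
\frac{\binom{n-1}{d-1}}{\binom{n-1}{d}} = \frac{d}{n-d},
\]
so we deduce $\frac{d_1}{n_1 - d_1} = \frac{d_2}{n_2 - d_2}$, which is equivalent to $\frac{d_1}{n_1} = \frac{d_2}{n_2}$.

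Finally, the pair of conclusions $\binom{n_1}{d_1} = \binom{n_2}{d_2}$ and $\frac{d_1}{n_1} = \frac{d_2}{n_2}$ is precisely the hypothesis of Lemma \ref{lem-binom1}, which yields $n_1 = n_2$ and $d_1 = d_2$. There is no real obstacle here; the only mild point to verify is that dividing is legitimate, which the range $1 \le d_\iota \le n_\iota - 1$ guarantees.
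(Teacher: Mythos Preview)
Your proof is correct and follows essentially the same strategy as the paper: add the two hypotheses via Pascal's rule to get $\binom{n_1}{d_1} = \binom{n_2}{d_2}$, extract the ratio $d_\iota/n_\iota$, and invoke Lemma~\ref{lem-binom1}. The only cosmetic difference is that the paper obtains the ratio by subtracting \eqref{eq-binom2-1} and \eqref{eq-binom2-2} and simplifying to $\binom{n_\iota}{d_\iota}\bigl(1 - \tfrac{2d_\iota}{n_\iota}\bigr)$, whereas you divide \eqref{eq-binom2-2} by \eqref{eq-binom2-1}; your route is arguably a touch cleaner.
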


\begin{proof}
  By direct computations, for $\iota = 1, 2$ we have
  \begin{align*}
     \binom{n_\iota - 1}{d_\iota} - \binom{n_\iota - 1}{d_\iota - 1} & = \frac{(n_\iota - 1)!}{d_\iota ! (n_\iota - d_\iota - 1)!} - \frac{(n_\iota - 1) !}{(d_\iota - 1 ) ! (n_\iota - d_\iota) !} \\
     & = \frac{(n_\iota - 1)! }{d_\iota ! (n_\iota - d_\iota)!} (n_\iota - 2 d_\iota) \\
     & = \binom{n_\iota}{d_\iota} \bigl(1 - \frac{2 d_\iota}{n_\iota} \bigr).
  \end{align*}
  By Equations \eqref{eq-binom2-1} and \eqref{eq-binom2-2} we then have
  \begin{equation}\label{eq-binom2-3}
    \binom{n_1}{d_1} \bigl(1 - \frac{2 d_1}{n_1}\bigr) = \binom{n_2}{d_2} \bigl(1 - \frac{2 d_2}{n_2}\bigr).
  \end{equation}
  Adding the Equations \eqref{eq-binom2-1} and \eqref{eq-binom2-2} together yields
  \begin{equation}\label{eq-binom2-5}
    \binom{n_1}{d_1} = \binom{n_2}{d_2}.
  \end{equation}
  We combine Equations \eqref{eq-binom2-3} and \eqref{eq-binom2-5}, then we obtain
  \begin{equation*}
    \frac{d_1}{n_1} = \frac{d_2}{n_2}.
  \end{equation*}
  By Lemma \ref{lem-binom1} we have $n_1 = n_2$ and $d_1 = d_2$.
\end{proof}

\section{Exterior powers of reflection representations} \label{sec-ext}

In this section we recollect some first results about exterior powers.
Let $W$ be a group endowed with a set of generators $S = \{ s_1, \dots, s_k\}$ as before.
Suppose $(V,\rho)$ is an $n$-dimensional representation of $W$.
The action $\bigwedge^d \rho$ of $W$ on the $d$th exterior power $\bigwedge^d V$ ($0 \le d \le n$) is given by
\[w \cdot (v_1 \wedge \dots \wedge v_d) = (w \cdot v_1) \wedge \dots \wedge (w \cdot v_d), \quad \forall w \in W,  v_1, \dots, v_d \in V.\]
In particular, $\bigwedge^0 V$ is the one-dimensional $W$-module with trivial action, and $\bigwedge^n V$ carries the one-dimensional representation $\det \circ \rho$.
If $\{v_1, \dots, v_n\}$ is a basis of $V$, then 
\[\{v_{i_1} \wedge \dots \wedge v_{i_d} \mid 1 \le i_1 < \dots < i_d \le n\}\]
is a basis of $\bigwedge^d V$.
In particular, $\dim \bigwedge^d V = \binom{n}{d}$.
For more details about exterior powers, see, for example, \cite{FH91}.

Suppose further that $(V,\rho)$ is a reflection representation and $\alpha_i$ is a chosen reflection vector of $s_i$ with eigenvalue $\lambda_i$ ($\ne 1$) for each $i \in [k]$ (see Definitions \ref{def-refl} and \ref{def-refl-rep}).
We also denote by $H_i$ the reflection hyperplane of $s_i$.
For each $i \in [k]$ and $0 \le d\le n$, we define
\[V_{d,i}^+ := \Bigl\{v \in \bigwedge^d V \Bigm| s_i \cdot v = v\Bigr\}, \quad V_{d,i}^- := \Bigl\{v \in \bigwedge^d V \Bigm| s_i \cdot v = \lambda_i v\Bigr\}\]
to be the eigen-subspaces of $s_i$ in $\bigwedge^d V$, for the eigenvalues $1$ and $\lambda_i$, respectively.

Retain the notations $W, V, s_i, \alpha_i$, etc.

\begin{lemma} [{\cite[Lemma 3.2 and Corollary 3.3]{Hu23-ext-pow}}] \label{lem-eigenspace}
  Let $i \in [k]$ and $0 \le d \le n$.
  \begin{enumerate}
    \item \label{lem-eigenspace-1} We have $V^+_{d,i} = \bigwedge^d H_i$ and $\dim V^+_{d,i} = \binom{n-1}{d}$.
        Here we regard $\binom{n-1}{n} = 0$ if $d = n$.
    \item \label{lem-eigenspace-2} Extend the reflection vector $\alpha_i$ arbitrarily to a basis of $V$, say, $\alpha_i, v_2, \dots, v_n$.
        Then, $V^-_{d,i}$ has a basis
        \[\{\alpha_i \wedge v_{i_1} \wedge \dots \wedge v_{i_{d-1}} \mid 2 \le i_1 < \dots < i_{d-1} \le n\}.\]
        In particular, $\dim V^-_{d,i} = \binom{n-1}{d-1}$.
        Here we regard $\binom{n-1}{-1} = 0$ if $d = 0$.
    \item \label{lem-eigenspace-3} As a vector space, $\bigwedge^d V = V^+_{d, i} \bigoplus V^-_{d, i}$.
        In particular, the only possible eigenvalues of $s_i$ on $\bigwedge^d V$ are $1$ and $\lambda_i$.
  \end{enumerate}
\end{lemma}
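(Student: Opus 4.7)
The plan is to write down a basis of $\bigwedge^d V$ consisting of simultaneous eigenvectors for $s_i$ and then read off all three parts at once by partitioning this basis according to its eigenvalue.

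First I would fix a convenient basis of $V$. Since $s_i$ is a reflection, Definition \ref{def-refl} gives $V = H_i \oplus \langle \alpha_i \rangle$ with $s_i$ acting by $1$ on $H_i$ and by $\lambda_i \neq 1$ on $\langle \alpha_i \rangle$. Choose any basis $v_2, \dots, v_n$ of $H_i$, so that $\alpha_i, v_2, \dots, v_n$ is a basis of $V$ diagonalizing $s_i$. Set $\alpha_i =: v_1$ with eigenvalue $\mu_1 := \lambda_i$, and $\mu_r := 1$ for $r \ge 2$.

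Then I would invoke the standard construction: the set
\[ \mathcal{B}_d := \{ v_{j_1} \wedge \cdots \wedge v_{j_d} \mid 1 \le j_1 < \cdots < j_d \le n \} \]
is a basis of $\bigwedge^d V$, and because each $v_{j_r}$ is an $s_i$-eigenvector with eigenvalue $\mu_{j_r}$, every element of $\mathcal{B}_d$ is a simultaneous eigenvector with eigenvalue $\prod_r \mu_{j_r}$. I would then split $\mathcal{B}_d$ into two classes: those wedges with $j_1 \ge 2$ (i.e.\ not involving $\alpha_i$), whose eigenvalue is $1$ and which number $\binom{n-1}{d}$; and those with $j_1 = 1$ (i.e.\ containing $\alpha_i$ as the leading factor), whose eigenvalue is $\lambda_i \cdot 1 \cdots 1 = \lambda_i$ and which number $\binom{n-1}{d-1}$ and have exactly the shape described in part \eqref{lem-eigenspace-2}.

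Finally I would draw the three conclusions. The first class spans $\bigwedge^d H_i$, so $\bigwedge^d H_i \subseteq V^+_{d,i}$; the second class spans a subspace of $V^-_{d,i}$. Since $\mathcal{B}_d$ is a basis of $\bigwedge^d V$ and $\lambda_i \ne 1$ forces $V^+_{d,i} \cap V^-_{d,i} = 0$, these inclusions must be equalities, which yields \eqref{lem-eigenspace-1} and \eqref{lem-eigenspace-2}, and simultaneously gives the direct sum decomposition in \eqref{lem-eigenspace-3}. The fact that the whole of $\bigwedge^d V$ is exhausted by $V^+_{d,i}$ and $V^-_{d,i}$ then forces $1$ and $\lambda_i$ to be the only possible eigenvalues of $s_i$ on $\bigwedge^d V$. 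There is essentially no obstacle here; the only thing to watch is the boundary conventions $\binom{n-1}{n} = 0 = \binom{n-1}{-1}$, which correctly record that $V^+_{n,i} = 0$ (since $\dim H_i = n-1$) and $V^-_{0,i} = 0$ (since $\bigwedge^0 V$ is trivial and $\lambda_i \ne 1$).
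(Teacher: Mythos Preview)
The paper does not reprove this lemma; it is simply quoted from \cite{Hu23-ext-pow}. Your argument is the standard one and is essentially what that reference does.

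There is one small omission. In part~\eqref{lem-eigenspace-2} the statement allows $v_2,\dots,v_n$ to be an \emph{arbitrary} completion of $\alpha_i$ to a basis of $V$, whereas you chose the special completion with $v_2,\dots,v_n\in H_i$. To finish, you need one more line: for any completion $\alpha_i,v_2',\dots,v_n'$, each vector $\alpha_i\wedge v_{i_1}'\wedge\cdots\wedge v_{i_{d-1}}'$ still lies in $V^-_{d,i}$ because $s_i\cdot v_r'=v_r'+c_r\alpha_i$ (Lemma~\ref{lem-refl}) and the extra $\alpha_i$-terms are killed by $\alpha_i\wedge\alpha_i=0$; these $\binom{n-1}{d-1}$ vectors are linearly independent (they are part of the standard wedge basis relative to the new basis), and since you have already shown $\dim V^-_{d,i}=\binom{n-1}{d-1}$ via the special basis, they must form a basis. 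With that sentence added, your proof is complete.
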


\begin{lemma} [{\cite[Proposition 3.5]{Hu23-ext-pow}}] \label{lem-intersect-eigenspace}
  Suppose the reflection vectors $\alpha_1, \dots, \alpha_m$ ($m \le k$) are linearly independent. We extend these vectors to a basis of $V$, say,
  \[\{\alpha_1, \dots, \alpha_m, v_{m+1}, \dots, v_n\}.\]
  \begin{enumerate}
    \item If $0 \le d < m$, then $\bigcap_{1 \le i \le m} V^-_{d,i} = 0$.
    \item If $m \le d \le n$, then $\bigcap_{1 \le i \le m} V^-_{d,i}$ has a basis
        \[\{\alpha_1 \wedge \dots \wedge \alpha_m \wedge v_{i_{m+1}} \wedge \dots \wedge v_{i_d} \mid m+1 \le i_{m+1} < \dots < i_d \le n \}.\]
        In particular, if $d = m$, then $\bigcap_{1 \le i \le m} V^-_{d,i}$ is one-dimensional with a basis vector $\alpha_1 \wedge \dots \wedge \alpha_m$.
  \end{enumerate}
\end{lemma}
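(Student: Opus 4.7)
The plan is to express each $V^-_{d,i}$ explicitly as a coordinate subspace with respect to a single monomial basis of $\bigwedge^d V$, and then read off the intersection by pure combinatorics.

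Fix the given basis $\{\alpha_1, \dots, \alpha_m, v_{m+1}, \dots, v_n\}$ of $V$ and set $e_j := \alpha_j$ for $1 \le j \le m$ and $e_j := v_j$ for $m+1 \le j \le n$. Then $\bigwedge^d V$ has the standard monomial basis $\{e_{j_1} \wedge \dots \wedge e_{j_d} \mid 1 \le j_1 < \dots < j_d \le n\}$. For each $i \in [m]$, I would apply Lemma \ref{lem-eigenspace}(\ref{lem-eigenspace-2}) using the particular extension of $\alpha_i$ to a basis of $V$ obtained by listing $\alpha_i$ first and then the remaining $e_j$'s in order. This is allowed because Lemma \ref{lem-eigenspace}(\ref{lem-eigenspace-2}) permits an arbitrary extension. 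The conclusion is that $V^-_{d,i}$ has as basis exactly those standard monomials $e_{j_1} \wedge \dots \wedge e_{j_d}$ whose index set contains $i$. Thus, relative to one and the same monomial basis of $\bigwedge^d V$, each $V^-_{d,i}$ is a coordinate subspace.

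The intersection of coordinate subspaces is again a coordinate subspace, so
\[\bigcap_{i=1}^m V^-_{d,i} = \operatorname{span}\bigl\{e_{j_1} \wedge \dots \wedge e_{j_d} \bigm| \{1,\dots,m\} \subseteq \{j_1,\dots,j_d\}\bigr\}.\]
Part (1) is now immediate: if $d < m$ there is no $d$-subset of $[n]$ containing $\{1,\dots,m\}$, so the spanning set is empty and the intersection is $0$. For part (2), if $d \ge m$ then every admissible index set has the form $\{1,\dots,m,i_{m+1},\dots,i_d\}$ with $m+1 \le i_{m+1} < \dots < i_d \le n$, and the corresponding monomial is precisely $\alpha_1 \wedge \dots \wedge \alpha_m \wedge v_{i_{m+1}} \wedge \dots \wedge v_{i_d}$, yielding the asserted basis.

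There is no real obstacle here; the only point requiring care is ensuring that the various $V^-_{d,i}$ are described relative to \emph{the same} monomial basis of $\bigwedge^d V$, which is precisely why Lemma \ref{lem-eigenspace}(\ref{lem-eigenspace-2}) is stated with the flexibility of choosing the extension. Once that common basis is fixed, both parts reduce to counting $d$-subsets of $[n]$ containing $[m]$.
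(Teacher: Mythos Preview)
Your argument is correct. The paper itself does not prove this lemma; it is quoted verbatim from the author's earlier work \cite[Proposition~3.5]{Hu23-ext-pow}, so there is no in-paper proof to compare against. Your approach---using the freedom in Lemma~\ref{lem-eigenspace}\eqref{lem-eigenspace-2} to realize every $V^-_{d,i}$ simultaneously as a coordinate subspace for the single monomial basis $\{e_{j_1}\wedge\dots\wedge e_{j_d}\}$, and then reading off the intersection combinatorially---is exactly the natural one, and it is essentially how the original proof in \cite{Hu23-ext-pow} proceeds as well. The only cosmetic point is that the basis vectors produced by Lemma~\ref{lem-eigenspace}\eqref{lem-eigenspace-2} have $\alpha_i$ in the first slot, so one tacitly reorders (absorbing a sign) to identify them with the standard monomials $e_{j_1}\wedge\dots\wedge e_{j_d}$ having $i\in\{j_1,\dots,j_d\}$; you clearly have this in mind, but it is worth saying once.
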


\begin{lemma} \label{lem-extra-eigen}
  Suppose $m \le d$, $m \le k-1$, and the reflection vectors $\alpha_1, \dots, \alpha_m$ are linearly independent.
  Suppose $\alpha_{m+1}$ is a linear combination of $\alpha_1, \dots, \alpha_m$.
  Then $\bigcap_{1 \le i \le m+1} V_{d,i}^- = \bigcap_{1 \le i \le m} V_{d,i}^- \ne 0$ (that is, $s_{m+1} \cdot v = \lambda_{m+1} v$ for any $v \in \bigcap_{1 \le i \le m} V_{d,i}^-$).
\end{lemma}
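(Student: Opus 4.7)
The plan is to prove the reverse inclusion $\bigcap_{1 \le i \le m} V_{d,i}^- \subseteq V_{d,m+1}^-$, since the other inclusion is automatic and the right-hand side is known to be nonzero by Lemma \ref{lem-intersect-eigenspace} (which applies because $m \le d$). So the whole content is to show that $s_{m+1} \cdot v = \lambda_{m+1} v$ for every $v \in \bigcap_{1 \le i \le m} V_{d,i}^-$.

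By Lemma \ref{lem-intersect-eigenspace}, after extending $\alpha_1, \dots, \alpha_m$ to a basis $\{\alpha_1, \dots, \alpha_m, v_{m+1}, \dots, v_n\}$ of $V$, every such $v$ can be written as
\[v = \alpha_1 \wedge \dots \wedge \alpha_m \wedge u\]
for some $u$ in the exterior algebra of $\mathrm{span}(v_{m+1}, \dots, v_n)$. Write $\alpha_{m+1} = \sum_{j=1}^m c_j \alpha_j$. Since $\alpha_{m+1} \ne 0$, some $c_j \ne 0$, and after relabeling I may assume $c_1 \ne 0$. Then $\{\alpha_{m+1}, \alpha_2, \dots, \alpha_m\}$ is also a basis of $\mathrm{span}(\alpha_1, \dots, \alpha_m)$ and
\[v = c_1^{-1}\, \alpha_{m+1} \wedge \alpha_2 \wedge \dots \wedge \alpha_m \wedge u.\]

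The key step is to apply $s_{m+1}$ to this rewritten form. By Lemma \ref{lem-refl}, $s_{m+1}(w) - w$ is a scalar multiple of $\alpha_{m+1}$ for every $w \in V$, so $\alpha_{m+1} \wedge s_{m+1}(w) = \alpha_{m+1} \wedge w$; expanding wedges, the same identity persists with $w$ replaced by any element of $\bigwedge^{\bullet} V$. Combined with $s_{m+1} \cdot \alpha_{m+1} = \lambda_{m+1} \alpha_{m+1}$, applying $s_{m+1}$ to $v$ factor-by-factor collapses every correction term carrying an extra $\alpha_{m+1}$, and leaves exactly $s_{m+1} \cdot v = \lambda_{m+1} v$, as required.

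I do not expect any serious obstacle. The only slightly delicate point is the change of basis that puts $\alpha_{m+1}$ in as a leading wedge factor: working directly with the original tuple $\alpha_1, \dots, \alpha_m$ would instead force a rank-one determinant computation on $\mathrm{span}(\alpha_1, \dots, \alpha_m)$ showing $\det(s_{m+1}|_{\mathrm{span}(\alpha_1,\dots,\alpha_m)}) = \lambda_{m+1}$, which is doable but less transparent than the one-line annihilation $\alpha_{m+1} \wedge \alpha_{m+1} = 0$ that the rewritten form enables.
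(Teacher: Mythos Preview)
Your proposal is correct and follows essentially the same route as the paper: both rewrite each basis vector of $\bigcap_{1\le i\le m} V_{d,i}^-$ with $\alpha_{m+1}$ as a leading wedge factor via the change of basis $\alpha_1 \mapsto c_1^{-1}\alpha_{m+1}$, then conclude membership in $V_{d,m+1}^-$. The only cosmetic difference is that the paper cites Lemma~\ref{lem-intersect-eigenspace} again for this last step, whereas you argue it directly from $\alpha_{m+1}\wedge s_{m+1}(w)=\alpha_{m+1}\wedge w$.
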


\begin{proof}
  The fact that $\bigcap_{1 \le i \le m} V_{d,i}^- \ne 0$ follows from Lemma \ref{lem-intersect-eigenspace}.
  Moreover, the subspace $\bigcap_{1 \le i \le m} V_{d,i}^-$ admits a basis of the form
  \begin{equation}\label{eq-lem-extra-eigen}
    \{\alpha_1 \wedge \dots \wedge \alpha_m \wedge v_{i_{m+1}} \wedge \dots \wedge v_{i_d} \mid  m+1 \le i_{m+1} < \dots < i_d \le n \}
  \end{equation}
  where $\{\alpha_1, \dots, \alpha_m, v_{m+1}, \dots, v_n\}$ is a basis of $V$.

  Suppose $\alpha_{m+1} = c_1 \alpha_1 + \dots + c_m \alpha_m$, $c_i \in \mathbb{F}$.
  Without loss of generality, we may assume further $c_1 \ne 0$.
  Then for any basis vector in \eqref{eq-lem-extra-eigen} we have
  \begin{align*}
     & \mathrel{\phantom{=}} \alpha_1 \wedge \dots \wedge \alpha_m \wedge v_{i_{m+1}} \wedge \dots \wedge v_{i_d} \\
     & = c_1 ^{-1} (c_1 \alpha_1) \wedge \alpha_2 \wedge \dots \wedge \alpha_m \wedge v_{i_{m+1}} \wedge \dots \wedge v_{i_d} \\
     & = c_1^{-1} (c_1 \alpha_1 + \dots + c_m \alpha_m) \wedge \alpha_2 \wedge \dots \wedge \alpha_m \wedge v_{i_{m+1}} \wedge \dots \wedge v_{i_d} \\
     & = c_1^{-1} \alpha_{m+1} \wedge \alpha_2 \wedge \dots \wedge \alpha_m \wedge v_{i_{m+1}} \wedge \dots \wedge v_{i_d}.
  \end{align*}
  Note that this is a nonzero vector, and that $\{\alpha_{m+1}, \alpha_2, \dots, \alpha_m, v_{m+1}, \dots, v_n\}$ is also a basis of $V$.
  By Lemma \ref{lem-intersect-eigenspace} again, we have
  \[\alpha_{m+1} \wedge \alpha_2 \wedge \dots \wedge \alpha_m \wedge v_{i_{m+1}} \wedge \dots \wedge v_{i_d} \in V_{d,m+1}^-.\]
  Therefore, $\bigcap_{1 \le i \le m} V_{d,i}^- \subseteq V_{d,m+1}^-$, and thus $\bigcap_{1 \le i \le m+1} V_{d,i}^- = \bigcap_{1 \le i \le m} V_{d,i}^-$.
\end{proof}

\begin{lemma} [{\cite[Proposition 3.6]{Hu23-ext-pow}}] \label{lem-ext-num}
  If $0 \le d, d' \le n$ are integers and $\bigwedge^d V \simeq \bigwedge^{d'} V$ as $W$-modules, then $d = d'$.
\end{lemma}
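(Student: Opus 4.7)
The plan is to reduce the claim to a purely numerical comparison of binomial coefficients by restricting attention to a single generator. Any isomorphism of $W$-modules must preserve, for every element $w \in W$, the eigenvalue multiplicities of the induced linear map. So picking any $s_i \in S$ and comparing its spectrum on $\bigwedge^d V$ and $\bigwedge^{d'} V$ will already be enough.

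Concretely, I would invoke Lemma \ref{lem-eigenspace}: on $\bigwedge^d V$ the map $\rho(s_i)$ is diagonalizable, and its only possible eigenvalues are $1$ (with multiplicity $\binom{n-1}{d}$) and $\lambda_i \ne 1$ (with multiplicity $\binom{n-1}{d-1}$). An isomorphism $\bigwedge^d V \simeq \bigwedge^{d'} V$ therefore forces
\[
\binom{n-1}{d} = \binom{n-1}{d'} \qquad \text{and} \qquad \binom{n-1}{d-1} = \binom{n-1}{d'-1}.
\]

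In the interior range $1 \le d, d' \le n-1$, Lemma \ref{lem-binom2} (applied with $n_1 = n_2 = n$) immediately gives $d = d'$. The boundary cases $d \in \{0, n\}$ have to be treated by hand but are easy: if $d = 0$, then $\binom{n-1}{d-1} = \binom{n-1}{-1} = 0$, so $\binom{n-1}{d'-1} = 0$ must hold; since $-1 \le d' - 1 \le n-1$, this binomial coefficient vanishes only at $d' - 1 = -1$, forcing $d' = 0$. The case $d = n$ is symmetric, using $\binom{n-1}{n} = 0$ and the other identity.

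I do not anticipate any real obstacle here: once Lemma \ref{lem-eigenspace} is available, the proof is a short dimension count packaged by Lemma \ref{lem-binom2}. The only mild subtlety is that Lemma \ref{lem-binom2} is stated only for interior values of $d_\iota$, so the two corner cases need the separate one-line argument above, based on recognizing when a single binomial coefficient vanishes.
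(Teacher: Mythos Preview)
Your proof is correct. The paper does not give its own proof of this lemma but cites \cite[Proposition 3.6]{Hu23-ext-pow}, and Remark~\ref{rmk-ext-num} confirms that the argument there uses only the action of a single reflection $s \in W$, exactly as you do: compare the eigenvalue multiplicities from Lemma~\ref{lem-eigenspace} and reduce to a binomial identity. Your use of Lemma~\ref{lem-binom2} for the interior range and the direct vanishing argument for the boundary cases $d \in \{0,n\}$ is clean and complete.
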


\begin{remark} \label{rmk-ext-num}
  Lemma \ref{lem-ext-num} holds for any representation on which some element $s \in W$ acts by a reflection, not necessary a reflection representation.
  See \cite[Proposition 3.6]{Hu23-ext-pow} for details.
\end{remark}

\begin{lemma} [{\cite[Corollary 3.8]{Hu23-ext-pow}}] \label{lem-ss}
  If the representation $(V,\rho)$ is irreducible, then the $W$-module $\bigwedge^d V$ is semisimple for any $d = 0, 1, \dots, n$.
\end{lemma}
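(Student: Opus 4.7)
The plan is to use the Zariski-closure trick to reduce the semisimplicity question for the abstract group $W$ to a statement about a reductive algebraic group in characteristic $0$. Let $G$ denote the Zariski closure of $\rho(W)$ inside the algebraic group $\operatorname{GL}(V)$. The first observation is that for any finite-dimensional polynomial construction $T(V)$ on $V$ (tensor powers, exterior powers, symmetric powers, or any subquotient thereof), and for any subspace $U \subseteq T(V)$, the set $\{g \in \operatorname{GL}(V) \mid g \cdot U \subseteq U\}$ is Zariski-closed. Hence a subspace $U$ is $W$-invariant if and only if it is $G$-invariant, and semisimplicity as a $W$-module is equivalent to semisimplicity as a $G$-module.

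Applying this to $V$ itself shows that $V$ remains irreducible as a $G$-module. Since $G$ is, by construction, a closed subgroup of $\operatorname{GL}(V)$, the action of $G$ on $V$ is faithful. The next step is to argue that $G$ must be reductive. Indeed, the space of vectors fixed by the unipotent radical $R_u(G)$ is a nonzero $G$-submodule of $V$ (nonzero because unipotent groups in characteristic $0$ acting on a nonzero space always have nonzero fixed vectors), so by irreducibility this space equals $V$, meaning $R_u(G)$ acts trivially. Faithfulness then forces $R_u(G) = 1$, so $G^0$ is reductive, i.e.\ $G$ is a reductive algebraic group.

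Finally, invoke the fundamental fact that in characteristic $0$ every finite-dimensional algebraic representation of a reductive algebraic group is completely reducible (linear reductivity of reductive groups in characteristic zero). Since $\bigwedge^d V$ is an algebraic $G$-representation (a $G$-stable subquotient of $V^{\otimes d}$), it is semisimple as a $G$-module, hence as a $W$-module.

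The argument has no serious obstacle; the only point deserving care is the appeal to linear reductivity in characteristic zero, which is precisely the reason the hypothesis $\operatorname{char} \mathbb{F} = 0$ is imposed throughout the paper. An alternative, more hands-on route would be to build a $W$-invariant positive-definite form on some complex form of $V$ via unitary tricks, but this requires extra reduction-theoretic work (reducing to the case $\mathbb{F} = \mathbb{C}$ via a field of definition, then compactifying the Zariski closure); the reductive-closure approach outlined above seems cleaner and is the route I would take.
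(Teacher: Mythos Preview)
Your Zariski-closure argument is correct and is the standard route to this statement, going back essentially to Chevalley: a faithful irreducible representation in characteristic $0$ forces the Zariski closure of the image to be reductive, hence linearly reductive, so every tensor construction (in particular $\bigwedge^d V$) is semisimple. The paper does not give a self-contained proof here but simply cites \cite[Lemma~3.7 and Corollary~3.8]{Hu23-ext-pow}, which follows the same reductive-closure strategy, so your proposal matches the intended argument.
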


\begin{remark} \label{rmk-ss}
  Recall that $\operatorname{char} \mathbb{F}$ is assumed to be  $0$.
  This is used in the proof of Lemma \ref{lem-ss}.
  See \cite[Lemma 3.7 and Corollary 3.8]{Hu23-ext-pow} for details.
\end{remark}

\section{Proof of Theorem \ref{thm-main1}} \label{sec-main1}

In this section we give the proof of Theorem \ref{thm-main1}.

Recall that $W$ is a group endowed with a set of generators $S = \{ s_1, \dots, s_k\}$, and $(V,\rho)$ is an $n$-dimensional irreducible reflection representation of $(W,S)$ over a field $\mathbb{F}$ of characteristic 0.
We denote by  $\alpha_i$ the chosen reflection vector of $s_i$ as before, and by $\lambda_i$ ($\ne 1$) the corresponding eigenvalue, for each $i \in [k]$.

By Lemma \ref{lem-ext-num}, the $W$-modules $\{\bigwedge^d V \mid 0 \le d \le n\}$ are pairwise non-isomorphic.
Therefore, to prove Theorem \ref{thm-main1}, it suffices to show that $\bigwedge^d V$ is a simple $W$-module for each $d$.
But we have seen in Lemma \ref{lem-ss} that $\bigwedge^d V$ is semisimple,
so the problem reduces to proving
\begin{equation} \label{eq-main1}
  \text{any endomorphism of $\bigwedge^d V$ is a scalar multiplication.}
\end{equation}

Recall in Definition \ref{def-assoc-digr} that  a digraph $G_I$ is associated with the reflection representation $(V,\rho)$ and an arbitrary subset $I \subseteq [k]$.
We have the following lemma.

\begin{lemma} \label{lem-GI}
  There exists a subset $I \subseteq [k]$ such that
  \begin{enumerate}
    \item \label{lem-GI-1} the digraph $G_I$ is weakly connected, and
    \item \label{lem-GI-2} $\{\alpha_i \mid i \in I\}$ is a basis of $V$.
  \end{enumerate}
\end{lemma}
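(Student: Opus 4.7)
The plan is to establish the lemma in two moves: first, to show that for any weakly connected component $I_1$ of the full associated digraph $G = G_{[k]}$, the reflection vectors $\{\alpha_i : i \in I_1\}$ already span $V$; and second, to extract from such an $I_1$ a subset $I$ which is a basis-support and is itself weakly connected in $G_I$.

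For the first step, fix a weakly connected component $I_1$ and let $U := \operatorname{span}\{\alpha_i : i \in I_1\}$. I would verify directly that $U$ is $W$-stable using Lemma \ref{lem-refl}: for each generator $s_j$ ($j \in [k]$) and each $i \in I_1$, write $s_j \cdot \alpha_i = \alpha_i + f(\alpha_i)\alpha_j$. If $f(\alpha_i) = 0$, this equals $\alpha_i \in U$; otherwise $s_j \cdot \alpha_i \ne \alpha_i$ produces an arrow $i \to j$ in $G$, which forces $j \in I_1$ (since $I_1$ is a full weakly connected component), so $\alpha_j \in U$. Irreducibility of $V$ then forces $U = V$; in particular $|I_1| \ge n$.

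For the second step, I would fix any $i_0 \in I_1$ and grow $I$ greedily starting from $I = \{i_0\}$, maintaining the invariants (i) $\{\alpha_i : i \in I\}$ is linearly independent and (ii) $G_I$ is weakly connected. At each stage where $|I| < n$, let $\partial I \subseteq I_1 \setminus I$ denote the vertices joined to $I$ by some arrow of $G$ (in either direction); weak connectedness of $I_1$ makes $\partial I$ non-empty. The key claim is that there exists $j \in \partial I$ with $\alpha_j \notin U_I := \operatorname{span}\{\alpha_i : i \in I\}$; adding such a $j$ to $I$ preserves both invariants, and after $n-1$ iterations we obtain the desired $I$.

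The main obstacle is the key claim, which I would prove by contradiction. Suppose $\alpha_j \in U_I$ for every $j \in \partial I$. Then I would show $U_I$ is $W$-stable by the same case analysis as in the first step: for $i \in I$ and any $j \in [k]$, $s_j \cdot \alpha_i = \alpha_i + f(\alpha_i)\alpha_j$; the only nontrivial case is $f(\alpha_i) \ne 0$, which produces an arrow $i \to j$ in $G$ and hence $j \in I \cup \partial I$, so in either case $\alpha_j \in U_I$. Since $0 \ne U_I \subsetneq V$ (as $|I| < n$ and the first step guarantees $|I_1| \ge n$ so the construction never runs out of candidates), this contradicts the irreducibility of $V$ and completes the argument.
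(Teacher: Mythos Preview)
Your proof is correct and follows essentially the same greedy-growing strategy as the paper's: maintain a weakly connected $I$ with linearly independent $\{\alpha_i : i \in I\}$, and at each stage invoke irreducibility of $V$ to locate a new vertex adjacent to $I$ whose reflection vector lies outside $U_I$. The paper's argument is a bit more direct---it skips your Step~1 and simply observes that if $V_J \subsetneq V$ then irreducibility yields $j \in J$ and $i_0 \in [k]$ with $s_{i_0}\cdot\alpha_j \notin V_J$, whence $\alpha_{i_0} \notin V_J$ and $j \to i_0$ is an arrow---so your preliminary restriction to a connected component (and hence Step~1 itself) is unnecessary, but the underlying mechanism is the same.
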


\begin{proof}
  Suppose we have found a subset $J \subseteq [k]$ such that
  \begin{enumerate}
    \item [(a)] the digraph $G_J$ is weakly connected, and
    \item [(b)] $\{\alpha_i \mid i \in J\}$ is linearly independent.
  \end{enumerate}
  For example, any singleton $\{j\} \subseteq [k]$ is such a subset.

  If $\lvert J \rvert = n$ (the dimension of $V$), then we are done.
  Otherwise, suppose $\lvert J \rvert < n$.
  Let $V_J := \bigoplus_{i \in J} \mathbb{F} \alpha_i$, which is a proper subspace of $V$.
  Since $V$ is a simple $W$-module, there exists $j \in J$ and $i_0 \in [k]$ such that  $s_{i_0} \cdot \alpha_j \notin V_J$.
  By Lemma \ref{lem-refl}, $s_{i_0} \cdot \alpha_j$ is of the form
  \[s_{i_0} \cdot \alpha_j = \alpha_j + x \alpha_{i_0}\]
  for some $x \in \mathbb{F}$.
  Then we must have $x \ne 0$ and ${i_0} \notin J$, otherwise $s_{i_0} \cdot \alpha_j$ would belong to $V_J$.
  Now let $J' = J \sqcup \{{i_0}\}$.
  Then the associated digraph $G_{J'}$ is also weakly connected since we have an arrow $j \to {i_0}$.
  Moreover, the set of vectors $\{\alpha_i \mid i \in J'\}$ is linearly independent since $\alpha_{i_0} \notin \bigoplus_{i \in J} \mathbb{F} \alpha_i$.
  Therefore, the subset $J'$ satisfies the conditions (a) and (b).
  Moreover, we have $\lvert J' \rvert = \lvert J \rvert + 1$.
  By induction on cardinality, there exists a subset $I \subseteq [k]$ satisfying \eqref{lem-GI-1} and \eqref{lem-GI-2}.
\end{proof}

The following corollary of Lemma \ref{lem-GI} will be used in Section \ref{sec-main2}.

\begin{corollary} [See also {\cite[Claim 5.2]{Hu23-ext-pow}}] \label{cor-span-refl}
  Suppose $(V,\rho)$ is an $n$-dimensional irreducible reflection representation of $(W,S)$ with reflection vectors $\{\alpha_i \mid i \in [k]\}$.
  Then the space $V$ is spanned by $\{\alpha_i \mid i \in [k]\}$, that is, $V = \sum_{i \in [k]} \mathbb{F} \alpha_i$.
  In particular, $n \le k$.
\end{corollary}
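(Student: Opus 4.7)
The plan is to derive this corollary directly from Lemma \ref{lem-GI}, which has already done the essential work. That lemma supplies a subset $I \subseteq [k]$ for which $\{\alpha_i \mid i \in I\}$ is a basis of $V$; in particular $|I| = n$.

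Given this basis, the first assertion follows from the inclusion chain
\[
V \;=\; \sum_{i \in I} \mathbb{F}\alpha_i \;\subseteq\; \sum_{i \in [k]} \mathbb{F}\alpha_i \;\subseteq\; V,
\]
which forces equality throughout. The dimension bound $n \le k$ is then immediate from $n = |I| \le |[k]| = k$.

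There is no real obstacle here; the statement is essentially a repackaging of the second conclusion of Lemma \ref{lem-GI}, with the hard step (the inductive enlargement of a weakly connected, linearly independent subset of reflection vectors) already absorbed into that lemma. For the reader's convenience, I note that a self-contained argument bypassing Lemma \ref{lem-GI} is also available: by Lemma \ref{lem-refl}, each generator $s_j$ acts as $s_j \cdot v = v + f_j(v)\alpha_j$ for some linear functional $f_j$, so the subspace $V' := \sum_{i \in [k]} \mathbb{F}\alpha_i$ is carried into itself by every $s_j$, hence is $W$-stable; being a nonzero subrepresentation of the irreducible module $V$, it must coincide with $V$, and the inequality $n \le k$ follows since $V$ is then spanned by $k$ vectors.
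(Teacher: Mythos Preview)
Your proof is correct and matches the paper's approach exactly: the paper states this result as a corollary of Lemma \ref{lem-GI} without further argument, and your inclusion chain makes that deduction explicit. The alternative self-contained argument you sketch (via $W$-stability of $\sum_i \mathbb{F}\alpha_i$ and irreducibility) is also valid and is essentially the route taken in \cite[Claim 5.2]{Hu23-ext-pow}.
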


Let $I$ be obtained as in Lemma \ref{lem-GI}.
Without loss of generality, we may assume $I = [n]$, the first $n$ indices of $[k]$ (note that we have $n \le k$ by Lemma \ref{lem-GI}).
The vectors $\{\alpha_i \mid i \in I\}$ form a basis of $V$. 
For each fixed $d$ with $0 \le d \le n$, the set of vectors
\[\{\alpha_{i_1} \wedge \dots \wedge \alpha_{i_d} \mid 1 \le i_1 < \dots < i_d \le n\}\]
is a basis of $\bigwedge^d V$.

For any set of distinct indices $1 \le i_1, \dots, i_d \le n$, by Lemma \ref{lem-intersect-eigenspace}, the intersection $\bigcap_{1 \le j \le d} V_{d, i_j}^-$ of the $d$ eigen-subspaces is one-dimensional,
\[\bigcap_{1 \le j \le d} V_{d, i_j}^- = \mathbb{F} \alpha_{i_1} \wedge \dots \wedge \alpha_{i_d}.\]
Suppose now $\varphi \in \operatorname{End}_W (\bigwedge^d V)$ is an endomorphism.
Then $\varphi$ preserves the subspace $\bigcap_{1 \le j \le d} V_{d, i_j}^-$.
Therefore,
\[\text{$\varphi (\alpha_{i_1} \wedge \dots \wedge \alpha_{i_d}) = \gamma_{i_1, \dots, i_d} \alpha_{i_1} \wedge \dots \wedge \alpha_{i_d}$ for some $\gamma_{i_1, \dots, i_d} \in \mathbb{F}$.}\]
Notice that $\alpha_{i_{\sigma(1)}} \wedge \dots \wedge \alpha_{i_{\sigma(d)}} = \operatorname{sign}(\sigma) \alpha_{i_1} \wedge \dots \wedge \alpha_{i_d}$ for any permutation $\sigma \in \mathfrak{S}_d$, and hence that $\gamma_{i_1, \dots, i_d}$ depends only on the set $\{i_1, \dots, i_d\}$, not on the order of the indices.
To prove the statement \eqref{eq-main1}, it suffices to show that the coefficients $\gamma_{i_1, \dots, i_d}$ are independent of the choice of the indices $\{i_1, \dots, i_d\}$.
The following result is essentially the same as \cite[Claim 5.5]{Hu23-ext-pow}.

\begin{lemma}
  Let $J = \{i_1, \dots, i_d\}$, $J' = \{j_1, \dots, j_d\}$ be two subsets of $I$, both consisting of $d$ elements.
  Suppose $J'$ can be obtained from $J$ by a move (see Definition \ref{def-move}) in the digraph $G_I$.
  Then $\gamma_{i_1, \dots, i_d} = \gamma_{j_1, \dots, j_d}$.
\end{lemma}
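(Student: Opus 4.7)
The plan is to exploit the eigenspace decomposition $\bigwedge^d V = V_{d,j}^+ \oplus V_{d,j}^-$ from Lemma \ref{lem-eigenspace} together with the fact that $\varphi$, commuting with $s_j$, preserves each eigenspace. Write $K = J \cap J'$, so $|K| = d-1$, and (up to swapping the roles of $J$ and $J'$, which is symmetric) assume the move uses an arrow $i \to j$ in $G_I$, so that $J = K \cup \{i\}$ and $J' = K \cup \{j\}$. Let $\omega_K := \alpha_{k_1} \wedge \cdots \wedge \alpha_{k_{d-1}}$ in some fixed order, so that $\omega_K \wedge \alpha_i = \alpha_J$ and $\omega_K \wedge \alpha_j = \pm \alpha_{J'}$ (up to sign) in the basis $\{\alpha_L \mid L \subseteq [n],\, |L| = d\}$ of $\bigwedge^d V$. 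By Lemma \ref{lem-refl}, the arrow $i \to j$ yields $s_j \cdot \alpha_i = \alpha_i + c\, \alpha_j$ with $c \ne 0$, while $s_j \cdot \alpha_{k_m} = \alpha_{k_m} + c_m\, \alpha_j$ for some scalars $c_m$.

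First I would expand $s_j \cdot \alpha_J = (s_j \cdot \omega_K) \wedge (s_j \cdot \alpha_i)$ by multiplicativity; since any wedge containing two or more copies of $\alpha_j$ vanishes, a short computation yields
\[ s_j \cdot \alpha_J - \alpha_J = c\, \omega_K \wedge \alpha_j + \alpha_j \wedge \eta \wedge \alpha_i \]
for a certain $(d-2)$-form $\eta$ supported on $\alpha_{k_1}, \ldots, \alpha_{k_{d-1}}$. Every summand on the right contains a factor of $\alpha_j$ and therefore lies in $V_{d,j}^-$. Consequently $v^- := (\lambda_j - 1)^{-1}(s_j \cdot \alpha_J - \alpha_J)$ is precisely the $V_{d,j}^-$-component of $\alpha_J$, and since $\varphi$ preserves $V_{d,j}^-$ and sends $\alpha_J$ to $\gamma_J \alpha_J$, the eigenspace decomposition forces $\varphi(v^-) = \gamma_J v^-$.

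Finally I would read off the coefficient of $\alpha_{J'}$ in the basis expansion of $v^-$. The first term $c\, \omega_K \wedge \alpha_j$ contributes a nonzero scalar multiple of $\alpha_{J'}$, while each monomial in $\alpha_j \wedge \eta \wedge \alpha_i$ corresponds to a $d$-subset of the form $(K \setminus \{k_m\}) \cup \{i, j\}$, which contains $i$ and hence differs from $J'$. Thus $\alpha_{J'}$ appears in $v^-$ with nonzero coefficient. Because $\varphi$ acts diagonally by $\gamma_L$ on each basis vector $\alpha_L$, comparing the coefficient of $\alpha_{J'}$ in $\varphi(v^-) = \gamma_J v^-$ yields $\gamma_{J'} = \gamma_J$. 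The only real subtlety is the bookkeeping that separates the $\alpha_{J'}$-contribution from the other terms in $v^-$; this is immediate from the fact that every extra term carries both $\alpha_i$ and $\alpha_j$ while $J'$ omits $i$.
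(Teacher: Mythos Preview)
Your proposal is correct and follows essentially the same route as the paper's proof: expand $s_j \cdot \alpha_J$, use that $\varphi$ commutes with $s_j$, and compare the coefficient of $\alpha_{J'}$, which is nonzero because of the arrow $i \to j$. The only cosmetic difference is that you package the commutation as ``$\varphi$ preserves the $V_{d,j}^-$-component of $\alpha_J$'', whereas the paper writes out $\varphi(s_j\cdot\alpha_J)=s_j\cdot\varphi(\alpha_J)$ directly; the underlying computation and the key coefficient comparison are identical.
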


\begin{proof}
  Without loss of generality, we may assume that $d \le n-1$, $J=\{1, \dots, d\}$, $J' = \{1,2, \dots, d-1, d+1\}$, and $d \to d+1$ is an arrow in $G_I$.
  Then $s_{d+1} \cdot \alpha_d \ne \alpha_d$.

  For $i = 1, \dots, d$, by Lemma \ref{lem-refl} we assume that
  \[s_{d+1} \cdot \alpha_i = \alpha_i + c_i \alpha_{d+1}, \quad c_i \in \mathbb{F}.\]
  Then $c_d \ne 0$.
  We have
  \begin{align*}
    & \mathrel{\phantom{=}} s_{d+1} \cdot (\alpha_1 \wedge \dots \wedge \alpha_d) \\
     & = (\alpha_1 + c_1 \alpha_{d+1}) \wedge \dots \wedge (\alpha_d + c_d \alpha_{d+1})\\
     & = \alpha_1 \wedge \dots \wedge \alpha_d   + \sum_{1 \le i \le n} (-1)^{d-i} c_i \cdot \alpha_1 \wedge \dots \wedge \widehat{\alpha}_i \wedge \dots \wedge \alpha_d \wedge \alpha_{d+1}.
  \end{align*}
  Hence,
  \begin{align*}
    & \mathrel{\phantom{=}} \varphi \bigl(s_{d+1} \cdot (\alpha_1 \wedge \dots \wedge \alpha_d)\bigr) \\
    & = \varphi \bigl(\alpha_1 \wedge \dots \wedge \alpha_d  + \sum_{i=1}^{d} (-1)^{d-i} c_i \cdot \alpha_1 \wedge \dots \wedge \widehat{\alpha}_i \wedge \dots \wedge  \alpha_{d+1}\bigr) \\
     & = \gamma_{1,\dots, d} \cdot \alpha_1 \wedge \dots \wedge \alpha_d  + \sum_{i=1}^{d} (-1)^{d-i} c_i \gamma_{1,\dots,\widehat{i},\dots, d+1} \cdot \alpha_1 \wedge \dots \wedge \widehat{\alpha}_i \wedge \dots \wedge  \alpha_{d+1}.
  \end{align*}
  This also equals
  \begin{align*}
     & \mathrel{\phantom{=}} s_{d+1} \cdot \varphi(\alpha_1 \wedge \dots \wedge \alpha_d) \\
     & = \gamma_{1,\dots, d} s_{d+1} \cdot (\alpha_1 \wedge \dots \wedge \alpha_d) \\
     & = \gamma_{1,\dots, d} \cdot \alpha_1 \wedge \dots \wedge \alpha_d + \sum_{i=1}^{d} (-1)^{d-i} c_i \gamma_{1,\dots, d} \cdot \alpha_1 \wedge \dots \wedge \widehat{\alpha}_i \wedge \dots \wedge  \alpha_{d+1}.
  \end{align*}
  Note that $c_d \ne 0$, and that the vectors involved in the summations above are linearly independent.
  Thus, we have the desired equality $\gamma_{1, \dots, d} = \gamma_{1, \dots, d-1, d+1}$ by comparing the coefficients of $\alpha_1 \wedge \dots \wedge \alpha_{d-1} \wedge \alpha_{d+1}$.
\end{proof}

In general, for two subsets $J$ and $J'$ of $I$, if both of them consist of $d$ elements, then, since $G_I$ is weakly connected, one can be obtained from the other by finite steps of moves by Lemma \ref{lem-move}.
Therefore, the coefficients $\gamma_{i_1, \dots, i_d}$ are constant among all choices of the distinct indices $1 \le i_1, \dots, i_d \le n$.

The proof of Theorem \ref{thm-main1} is completed.

\begin{remark}
  We cannot expect the digraph $G_I$ in Lemma \ref{lem-GI}  to be strongly connected.
  For example, let $S = \{s_1, s_2, s_3\}$ consist of 3 elements, and $V = \mathbb{F} \alpha_1 \oplus \mathbb{F} \alpha_2$ be a two-dimensional vector space.
  Define three reflections on $V$ by
  \begin{alignat*}{2}
    s_1 \cdot \alpha_1 & = - \alpha_1,   & s_1 \cdot \alpha_2 & = \alpha_2, \\
    s_2 \cdot \alpha_1 & = \alpha_1 + 2 \alpha_2, \quad \quad & s_2 \cdot \alpha_2 & = -\alpha_2, \\
    s_3 \cdot \alpha_1 & = \alpha_1, & s_3 \cdot \alpha_2 & = -2\alpha_1 - \alpha_2.
  \end{alignat*}
  Then the corresponding reflection vectors are $\alpha_1$, $\alpha_2$, and $\alpha_3:= - \alpha_1 - \alpha_2$, respectively.
  The associated digraph $G_{[3]}$ is as follows:
  \begin{equation*}
    \begin{tikzpicture}
      \node [circle, draw, inner sep=2pt, label=left:$1$] (s1) at (0,0) {};
      \node [circle, draw, inner sep=2pt, label=left:$2$] (s2) at (1,1) {};
      \node [circle, draw, inner sep=2pt, label=right:$3$] (s3) at (2,0) {};
      \draw  (s1) -- (s2) -- (s3) -- (s1);
      \draw ($(0.55,0.55) + (195:0.2)$) -- (0.55,0.55) -- ($(0.55,0.55) + (255:0.2)$);
      \draw ($(1.55, 0.45) + (105:0.2)$) -- (1.55, 0.45) -- ($(1.55, 0.45) + (165:0.2)$);
      \draw ($(0.95,0) + (30:0.2)$) -- (0.95,0) -- ($(0.95,0) + (330:0.2)$);
    \end{tikzpicture}
  \end{equation*}
  In this digraph, each sub-digraph spanned by two vertices is not strongly connected.
\end{remark}

\section{Proof of Theorem \ref{thm-main2}} \label{sec-main2}

This section is devoted to proving Theorem \ref{thm-main2}.

Recall that $W$ is a group endowed with a set of generators $S = \{ s_1, \dots, s_k\}$, and $(V_\iota, \rho_\iota)$, $\iota = 1, 2$, are two irreducible reflection representations.
We use the following notations.
\begin{alignat*}{2}
  & \text{$n_\iota$ ($\iota = 1, 2$) :} \quad & & \text{$\dim V_\iota$} \\
  & \text{$\alpha_i$ ($i \in [k]$) :} \quad & & \text{the chosen reflection vector of $s_i$ in $V_1$} \\
  & \text{$\lambda_i$ ($\ne 1$) :} \quad & & \text{the corresponding eigenvalue, $s_i \cdot \alpha_i = \lambda_i \alpha_i$} \\
  & \text{$\beta_i$ ($i \in [k]$) :} \quad & & \text{the chosen reflection vector of $s_i$ in $V_2$} \\
  & \text{$\mu_i$ ($\ne 1$) :} \quad & & \text{the corresponding eigenvalue, $s_i \cdot \beta_i = \mu_i \beta_i$}
\end{alignat*}

Suppose
\[\psi: \bigwedge^{d_1} V_1 \xrightarrow{\sim} \bigwedge^{d_2} V_2\]
is an isomorphism of $W$-modules, where $d_1, d_2$ are certain integers satisfying $1 \le d_\iota \le n_\iota -1$ ($\iota = 1,2$).
As in Section \ref{sec-ext}, for each $i \in [k]$ we denote by
\begin{alignat*} {2}
  V_{1, d_1, i}^+ & := \Bigl\{ v \in \bigwedge^{d_1} V_1 \Bigm | s_i \cdot v = v \Bigr\}, \quad & V_{1, d_1, i}^- & := \Bigl\{ v \in \bigwedge^{d_1} V_1 \Bigm | s_i \cdot v = \lambda_i v \Bigr\}, \\
  V_{2, d_2, i}^+ & := \Bigl\{ v \in \bigwedge^{d_2} V_2 \Bigm | s_i \cdot v = v \Bigr\}, \quad & V_{2, d_2, i}^- & := \Bigl\{ v \in \bigwedge^{d_2} V_2 \Bigm | s_i \cdot v = \mu_i v \Bigr\}
\end{alignat*}
the eigen-subspaces of $s_i$.

Before giving the rigorous proof, let us talk a little more about Theorem \ref{thm-main2} informally.
A priori, an isomorphism $f : V_1 \to V_2$ of reflection representations gives an isomorphism $\bigwedge^d f : \bigwedge^d V_1 \to \bigwedge^d V_2$ via
\[\Bigl( \bigwedge^d f \Bigr) (v_1 \wedge \dots \wedge v_d) = f (v_1) \wedge \dots \wedge f (v_d), \quad \forall v_1, \dots, v_d \in V_1. \]
It is not difficult to see that $f(\alpha_i) = z_i \beta_i$ for some $z_i \in \mathbb{F}^\times$.
Then we have
\[\Bigl( \bigwedge^d f \Bigr) (\alpha_{i_1} \wedge \dots \wedge \alpha_{i_d}) = z_{i_1} \cdots z_{i_d} \beta_{i_1} \wedge \dots \wedge \beta_{i_d} \text{ for any } i_1, \dots, i_d \in [k].\]
Conversely suppose in Theorem \ref{thm-main2} that $d = d_1 = d_2$, and that the isomorphism $\psi: \bigwedge^d V_1 \to \bigwedge^d V_2$ is given by an isomorphism $f : V_1 \to V_2$.
Suppose further that we are able to show for any indices $i_1, \dots, i_d \in [k]$ that
\[\psi(\alpha_{i_1} \wedge \dots \wedge \alpha_{i_d}) = \zeta_{i_1, \dots, i_d} \beta_{i_1} \wedge \dots \wedge \beta_{i_d} \text{ for some } \zeta_{i_1, \dots, i_d} \in \mathbb{F}^\times.\]
(This is indeed the case, see Subsection \ref{subsec-coinc-digraphs}.)
Since the map $f$ is of the form $f(\alpha_i) = z_i \beta_i$, we have $\zeta_{i_1, \dots, i_d} = z_{i_1} \cdots z_{i_d}$, and
\[\frac{z_i}{z_j} = \frac{\zeta_{i, i_2, \dots, i_d}}{\zeta_{j, i_2, \dots, i_d}} \text{ for any suitable indices } i, j, i_2, \dots, i_d \in [k]. \]
This indicates that
\begin{equation}\label{eq-informal}
  \begin{split}
    &\text{the ratio $\frac{\zeta_{i, i_2, \dots, i_d}}{\zeta_{j, i_2, \dots, i_d}}$ only depends on $i$ and $j$,}   \\
    &\text{but independent of the indices $i_2, \dots, i_d$.}
  \end{split}
\end{equation}
We would be close to find the desired isomorphism $f$ if we can prove \eqref{eq-informal} (this is essentially Lemma \ref{lem-same-digraph-2}).

We divide the proof of Theorem \ref{thm-main2} into the following five steps, presented in Subsections \ref{subsec-pre-num} to \ref{subsec-isom} respectively:
\begin{enumerate}
  \item [Step 1.] Show that $d_1 = d_2$, $n_1 = n_2$, and $\lambda_i = \mu_i$ for each $i \in [k]$.
  \item [Step 2.] Show that the linear independence of a set of reflection vectors in $V_1$ is equivalent to that in $V_2$.
  \item [Step 3.] Show that the two reflection representations have the same associated digraphs.
  \item [Step 4.] Define a linear isomorphism $f: V_1 \to V_2$ of vector spaces.
  \item [Step 5.] Show that $f$ is an isomorphism of $W$-modules.
\end{enumerate}

\subsection{A preliminary numerical result} \label{subsec-pre-num}

\begin{proposition} \label{prop-num}
  $d_1 = d_2$, $n_1 = n_2$. Moreover, $\lambda_i = \mu_i$ for each $i \in [k]$.
\end{proposition}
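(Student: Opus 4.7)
The plan rests on the observation that a $W$-equivariant isomorphism $\psi$ must, in particular, intertwine the $s_i$-eigendecompositions on both sides for every generator $s_i \in S$. I would extract all three conclusions from this single principle, applied with Lemma \ref{lem-eigenspace} and Lemma \ref{lem-binom2}.

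First I would pin down the eigenvalues. By Lemma \ref{lem-eigenspace}\eqref{lem-eigenspace-3}, the only eigenvalues of $s_i$ on $\bigwedge^{d_1} V_1$ are $1$ and $\lambda_i$, and likewise the only eigenvalues on $\bigwedge^{d_2} V_2$ are $1$ and $\mu_i$. The hypothesis $1 \le d_\iota \le n_\iota - 1$ guarantees that both eigen-subspaces $V^+_{\iota, d_\iota, i}$ and $V^-_{\iota, d_\iota, i}$ are nonzero, since by Lemma \ref{lem-eigenspace}\eqref{lem-eigenspace-1}\eqref{lem-eigenspace-2} their dimensions $\binom{n_\iota-1}{d_\iota}$ and $\binom{n_\iota-1}{d_\iota-1}$ are strictly positive under this constraint. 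Thus the nontrivial eigenvalue $\lambda_i$ is actually realized on $\bigwedge^{d_1} V_1$, and since $\psi$ intertwines the $s_i$-actions, we must have $\lambda_i \in \{1, \mu_i\}$; as $\lambda_i \ne 1$, this forces $\lambda_i = \mu_i$.

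Second, once the eigenvalues have been matched, $\psi$ restricts to isomorphisms $V^+_{1, d_1, i} \xrightarrow{\sim} V^+_{2, d_2, i}$ and $V^-_{1, d_1, i} \xrightarrow{\sim} V^-_{2, d_2, i}$. Reading off dimensions via Lemma \ref{lem-eigenspace} yields the two identities
\[\binom{n_1-1}{d_1} = \binom{n_2-1}{d_2}, \qquad \binom{n_1-1}{d_1-1} = \binom{n_2-1}{d_2-1},\]
and an application of Lemma \ref{lem-binom2} concludes that $n_1 = n_2$ and $d_1 = d_2$.

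I do not anticipate a serious obstacle: the proposition is essentially bookkeeping once the eigenspace structure of Lemma \ref{lem-eigenspace} and the numerical Lemma \ref{lem-binom2} are in hand. The only mildly delicate point is ensuring the $-$ eigenspaces are genuinely present on both sides so that the match $\lambda_i = \mu_i$ is actually forced (rather than being vacuous), and this is exactly the role of the hypothesis $1 \le d_\iota \le n_\iota - 1$.
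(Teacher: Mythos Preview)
Your proposal is correct and follows essentially the same approach as the paper: both arguments use Lemma~\ref{lem-eigenspace} to read off the eigenspace dimensions $\binom{n_\iota-1}{d_\iota}$ and $\binom{n_\iota-1}{d_\iota-1}$, invoke Lemma~\ref{lem-binom2} to conclude $n_1=n_2$ and $d_1=d_2$, and use the nonvanishing of the $-$~eigenspace to force $\lambda_i=\mu_i$. The only difference is order---you establish $\lambda_i=\mu_i$ first and then the dimension equalities, while the paper does the reverse---which is immaterial.
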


\begin{proof}
  Note that the element $s_1 \in S$  acts by reflections on both $V_1$ and $V_2$.
  Since $\bigwedge^{d_1} V_1 \simeq \bigwedge^{d_2} V_2$ as $W$-modules, we have
  \[\dim V_{1, d_1, 1}^+ = \dim V_{2, d_2, 1}^+, \quad \dim V_{1, d_1,1}^- = \dim V_{2, d_2, 1}^-.\]
  Then we have by Lemma \ref{lem-eigenspace}\eqref{lem-eigenspace-1}\eqref{lem-eigenspace-2}
  \begin{equation}\label{eq-prop-num}
    \binom{n_1-1}{d_1} = \binom{n_2-1}{d_2}, \quad \binom{n_1-1}{d_1-1} = \binom{n_2-1}{d_2-1}.
  \end{equation}
  Notice that $1 \le d_\iota \le n_\iota - 1$ for $\iota = 1,2$.
  By Lemma \ref{lem-binom2}, Equations \eqref{eq-prop-num} imply $d_1 = d_2$, $n_1 = n_2$.

  By Lemma \ref{lem-eigenspace}\eqref{lem-eigenspace-3}, we have $\bigwedge^{d_1} V_1 = V_{1,d_1,i}^+ \bigoplus V_{1,d_1,i}^-$ for each $i \in [k]$, and the only possible eigenvalues of $s_i$ on $\bigwedge^{d_1} V_1$ are $1$ and $\lambda_i$.
  But $\dim V_{1,d_1,i}^- = \binom{n_1 - 1}{d_1 -1} \ne 0$ since $1 \le d_1 \le n_1 -1$.
  Thus $\lambda_i$ is indeed an eigenvalue.
  Similarly, the only eigenvalues of $s_i$ on $\bigwedge^{d_2} V_2$ are $1$ and $\mu_i$.
  Thus we must have $\lambda_i = \mu_i$.
\end{proof}

\begin{remark}
  From the proof of Proposition \ref{prop-num}, we see that the results hold for two representations $(V_1,\rho_1)$, $(V_2,\rho_2)$ on which $\rho_1(s)$ and $\rho_2(s)$ are both reflections for some element $s \in W$, not necessary to be reflection representations.
\end{remark}

In view of Proposition \ref{prop-num}, we denote
\[d := d_1 = d_2 \quad \text{and} \quad n := \dim V_1 = \dim V_2\]
from now on.
Note that we have $1 \le d \le n-1$ by assumption.
However, Theorem \ref{thm-main2} for the case $d = 1$ is trivial.
Thus we may assume $2 \le d \le n-1$.

\subsection{Preliminary results on linear independence of reflection vectors}

This subsection aims to prove  Propositions \ref{prop-corr-basis} and \ref{prop-tran-lin-ind}, which transfer linear independence property of reflection vectors in $V_1$ to those with the same indices in $V_2$.
Recall that $k = \lvert S \rvert$ is the number of chosen generators of the group $W$, and $\alpha_i$, $\beta_i$ ($i \in [k]$) are the reflection vectors of the generator $s_i$ in the space $V_1$ and $V_2$ respectively.
By Corollary \ref{cor-span-refl}, the $n$-dimensional vector space $V_1$ is spanned by $\alpha_1, \dots, \alpha_k$, and we have $n \le k$.

\begin{lemma} \label{lem-com-eigen-decomp}
  Suppose $\{\alpha_1, \dots, \alpha_n\}$ is a basis of $V_1$.
  Then we have decompositions of vector spaces
  \[\bigwedge^d V_1 = \bigoplus_{1 \le i_1 < \dots < i_d \le n} \Bigl( \bigcap_{1 \le j \le d} V_{1,d,i_j}^- \Bigr)\]
  and
  \[\bigwedge^d V_2 = \bigoplus_{1 \le i_1 < \dots < i_d \le n} \Bigl( \bigcap_{1 \le j \le d} V_{2,d,i_j}^- \Bigr).\]
\end{lemma}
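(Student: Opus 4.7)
The plan is to establish the two decompositions separately, using Lemma~\ref{lem-intersect-eigenspace} directly on the $V_1$ side and then transporting the result across the isomorphism $\psi$ to the $V_2$ side.

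For $\bigwedge^d V_1$, I would apply part~(2) of Lemma~\ref{lem-intersect-eigenspace} with $m = d$ to any choice of $d$ vectors $\alpha_{i_1}, \dots, \alpha_{i_d}$ from the basis $\{\alpha_1, \dots, \alpha_n\}$; these are linearly independent, so the lemma yields that $\bigcap_{1 \le j \le d} V_{1,d,i_j}^-$ is one-dimensional with basis vector $\alpha_{i_1} \wedge \dots \wedge \alpha_{i_d}$. As $\{i_1 < \dots < i_d\}$ runs over all $d$-subsets of $[n]$, the vectors $\alpha_{i_1} \wedge \dots \wedge \alpha_{i_d}$ form the standard basis of $\bigwedge^d V_1$, so the sum of these lines already exhausts $\bigwedge^d V_1$. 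The count $\binom{n}{d}$ of summands, each of dimension one, matches $\dim \bigwedge^d V_1 = \binom{n}{d}$, so the sum is automatically direct.

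For $\bigwedge^d V_2$, I would exploit the $W$-module isomorphism $\psi \colon \bigwedge^{d} V_1 \xrightarrow{\sim} \bigwedge^{d} V_2$. Because $\psi$ intertwines the $W$-actions and $\lambda_i = \mu_i$ by Proposition~\ref{prop-num}, the map $\psi$ sends each eigenspace $V_{1,d,i}^-$ isomorphically onto $V_{2,d,i}^-$, hence carries $\bigcap_{1 \le j \le d} V_{1,d,i_j}^-$ isomorphically onto $\bigcap_{1 \le j \le d} V_{2,d,i_j}^-$. Applying $\psi$ to the direct sum decomposition already established for $\bigwedge^d V_1$ yields the desired direct sum decomposition for $\bigwedge^d V_2$.

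I do not expect a real obstacle here; the only conceptual point to flag is that on the $V_2$ side one cannot simply reapply Lemma~\ref{lem-intersect-eigenspace}, because at this stage we do not yet know that $\{\beta_1, \dots, \beta_n\}$ is a basis of $V_2$ (establishing such independence is precisely the purpose of the subsequent steps of the proof of Theorem~\ref{thm-main2}). Transporting structure through $\psi$ circumvents this gap cleanly.
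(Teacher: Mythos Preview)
Your proposal is correct and follows essentially the same approach as the paper: establish the $V_1$ decomposition via Lemma~\ref{lem-intersect-eigenspace} and the standard wedge basis, then transport it through $\psi$ to $V_2$. Your explicit mention of $\lambda_i = \mu_i$ (from Proposition~\ref{prop-num}) to justify $\psi(V_{1,d,i}^-) = V_{2,d,i}^-$ is a detail the paper leaves implicit, and your closing remark about why one cannot simply reapply Lemma~\ref{lem-intersect-eigenspace} on the $V_2$ side is exactly the point of the lemma.
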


\begin{proof}
  The vector space $\bigwedge^d V_1$ has a basis
  \[\{\alpha_{i_1} \wedge \dots \wedge \alpha_{i_d} \mid 1 \le i_1 < \dots < i_d \le n \}.\]
  Note that by Lemma \ref{lem-intersect-eigenspace} the vector $\alpha_{i_1} \wedge \dots \wedge \alpha_{i_d}$ is a basis vector of the one-dimensional space $\bigcap_{1 \le j \le d} V_{1,d,i_j}^-$, which is the intersection of eigen-subspaces of $s_{i_j}$'s in $\bigwedge^d V_1$.
  Therefore, we have a decomposition of vector space
  \begin{align*}
    \bigwedge^d V_1 & = \bigoplus_{1 \le i_1 < \dots < i_d \le n} \mathbb{F} \langle \alpha_{i_1} \wedge \dots \wedge \alpha_{i_d} \rangle \\
     & = \bigoplus_{1 \le i_1 < \dots < i_d \le n} \Bigl( \bigcap_{1 \le j \le d} V_{1,d,i_j}^- \Bigr).
  \end{align*}
  Since $\psi: \bigwedge^{d} V_1 \xrightarrow{\sim} \bigwedge^{d} V_2$ is an isomorphism of $W$-modules, we have
  \[\psi \Bigl(\bigcap_{1 \le j \le d} V_{1,d,i_j}^-\Bigr) = \bigcap_{1 \le j \le d} V_{2,d,i_j}^-\]
  for any set of indices $1 \le i_1 < \dots < i_d \le n$, and hence
  \[\bigwedge^d V_2 = \bigoplus_{1 \le i_1 < \dots < i_d \le n} \Bigl( \bigcap_{1 \le j \le d} V_{2,d,i_j}^- \Bigr)\]
  as claimed.
\end{proof}

Recall that the number $d$ satisfies $d+1 \le n$.
We have the following lemma which is a  ``weak version'' of Proposition \ref{prop-tran-lin-ind}.

\begin{lemma} \label{lem-corr-basis}
  Suppose $1 \le j_1, \dots, j_{d+1} \le k$.
  If $\alpha_{j_1}, \dots, \alpha_{j_{d+1}}$ are linearly independent, then so are $\beta_{j_1}, \dots, \beta_{j_{d+1}}$.
\end{lemma}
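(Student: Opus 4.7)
The plan is a proof by contradiction, leveraging the fact that $\psi$ intertwines the $s_i$-actions on both sides and that, by Proposition \ref{prop-num}, the eigenvalue $\lambda_i$ of $s_i$ on $V_1$ coincides with $\mu_i$ on $V_2$. Consequently $\psi$ maps $V_{1,d,i}^-$ isomorphically onto $V_{2,d,i}^-$ for every $i \in [k]$, and therefore carries intersections of such eigen-subspaces on the $V_1$-side to the corresponding intersections on the $V_2$-side.

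With that tool in hand, the first step is to observe that since $\alpha_{j_1}, \dots, \alpha_{j_{d+1}}$ are linearly independent and $d < d+1$, part (1) of Lemma \ref{lem-intersect-eigenspace} gives
\[
\bigcap_{l=1}^{d+1} V_{1,d,j_l}^- = 0,
\]
and applying $\psi$ yields $\bigcap_{l=1}^{d+1} V_{2,d,j_l}^- = 0$ as well.

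Now suppose for contradiction that $\beta_{j_1}, \dots, \beta_{j_{d+1}}$ are linearly dependent. After relabeling, let $\beta_{j_1}, \dots, \beta_{j_m}$ be a maximal linearly independent subset, so $m \le d$. The main step is then to show that adjoining each of the remaining indices does not shrink the corresponding intersection of eigen-subspaces in $\bigwedge^d V_2$: for each $l > m$, the vector $\beta_{j_l}$ is a linear combination of $\beta_{j_1}, \dots, \beta_{j_m}$, so Lemma \ref{lem-extra-eigen} (applied with the linearly independent family $\beta_{j_1},\dots,\beta_{j_m}$ together with $\beta_{j_l}$) gives $\bigcap_{i=1}^{m} V_{2,d,j_i}^- \subseteq V_{2,d,j_l}^-$. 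Intersecting over all such $l$ produces
\[
\bigcap_{l=1}^{d+1} V_{2,d,j_l}^- = \bigcap_{i=1}^{m} V_{2,d,j_i}^-.
\]
Since $m \le d \le n$ and $\beta_{j_1}, \dots, \beta_{j_m}$ are linearly independent, part (2) of Lemma \ref{lem-intersect-eigenspace} makes the right-hand side non-zero, contradicting the vanishing established above. Hence $\beta_{j_1}, \dots, \beta_{j_{d+1}}$ must be linearly independent.

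I expect no serious obstacle here; the only delicate point is the bookkeeping in the last step, namely ensuring that Lemma \ref{lem-extra-eigen} can be applied individually to each redundant index $j_l$ ($l > m$) relative to the linearly independent core $\beta_{j_1},\dots,\beta_{j_m}$, rather than trying to extend the independent family one index at a time (which fails once a dependent vector has been added).
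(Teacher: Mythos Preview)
Your proof is correct and follows essentially the same approach as the paper's: both argue by contradiction using a maximal independent subfamily of the $\beta_{j_l}$'s, invoke Lemma~\ref{lem-extra-eigen} to show the redundant vectors do not shrink the intersection of eigen-subspaces, and use Lemma~\ref{lem-intersect-eigenspace} together with the fact that $\psi$ carries $\bigcap V_{1,d,j_l}^-$ onto $\bigcap V_{2,d,j_l}^-$ to obtain the contradiction. The only cosmetic difference is the direction in which you apply $\psi$ (you transfer the vanishing from $V_1$ to $V_2$, whereas the paper transfers a nonzero vector from $V_2$ back to $V_1$).
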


\begin{proof}
  Suppose otherwise that $\beta_{j_1}, \dots, \beta_{j_{d+1}}$ are linearly dependent and the subset $\{\beta_{j_1}, \dots, \beta_{j_{h}}\}$ ($h \le d$) is a maximal linearly independent set.
  Then there exists a nonzero vector $v$ in $\bigwedge^d V_2$ of the form $v = \beta_{j_1} \wedge \dots \wedge \beta_{j_{h}} \wedge v_{h+1} \wedge \dots \wedge v_d$.
  By Lemma \ref{lem-intersect-eigenspace}, we have
  \[v \in \bigcap_{1 \le i \le h} V_{2,d,j_i}^-.\]
  Note that for any index $i$ such that $h+1 \le i \le d+1$, $\beta_{j_i}$ is a linear combination of $\beta_{j_1}, \dots, \beta_{j_{h}}$.
  Then by Lemma \ref{lem-extra-eigen}, we have
  \[v \in \bigcap_{1 \le i \le h} V_{2,d,j_i}^- = \bigcap_{1 \le i \le d+1} V_{2,d,j_i}^-.\]
  As in the proof of Lemma \ref{lem-com-eigen-decomp}, since $\psi: \bigwedge^{d} V_1 \xrightarrow{\sim} \bigwedge^{d} V_2$ is an isomorphism of $W$-modules, we have
  \[\psi^{-1} (v) \in \bigcap_{1 \le i \le d+1} V_{1,d,j_i}^-.\]
  Note that $\psi^{-1} (v)$ is a nonzero vector.
  However, the intersection $\bigcap_{1 \le i \le d+1} V_{1,d,j_i}^-$ is zero by Lemma \ref{lem-intersect-eigenspace}.
  This is absurd.
\end{proof}

\begin{proposition} \label{prop-corr-basis}
  Suppose $1 \le j_1, \dots, j_n \le k$.
  If $\{\alpha_{j_1}, \dots, \alpha_{j_n}\}$  is a basis of $V_1$, then $\{\beta_{j_1}, \dots, \beta_{j_n}\}$ is a basis of $V_2$, and vice versa.
\end{proposition}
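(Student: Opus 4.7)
The plan is to upgrade Lemma \ref{lem-corr-basis} --- which transfers linear independence of only $d+1$ reflection vectors --- to a statement about all $n$ of them by exploiting the full eigen-decomposition of $\bigwedge^d V_2$. Relabeling the generators if necessary, assume $\{j_1, \dots, j_n\} = \{1, \dots, n\}$. Then Lemma \ref{lem-com-eigen-decomp} applies directly and, combined with the isomorphism $\psi$, yields
\[
  \bigwedge^d V_2 = \bigoplus_{1 \le i_1 < \dots < i_d \le n} \bigcap_{r=1}^d V_{2,d,i_r}^-,
\]
with every summand one-dimensional.

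The main step is to identify each of these summands explicitly. Fix a $d$-subset $I \subseteq \{1, \dots, n\}$ and let $h$ be the size of a maximal linearly independent subset of $\{\beta_i : i \in I\}$. Iterating Lemma \ref{lem-extra-eigen}, the intersection $\bigcap_{i \in I} V_{2,d,i}^-$ coincides with the intersection over that smaller subset, which by Lemma \ref{lem-intersect-eigenspace} has dimension $\binom{n-h}{d-h}$. Because $1 \le d \le n-1$, this binomial equals $1$ only when $h = d$: indeed for $h < d$ we have $d-h \ge 1$ and $n-h > d-h$, so $\binom{n-h}{d-h} \ge 2$. Hence $\{\beta_i : i \in I\}$ is linearly independent and the corresponding summand is spanned by $\bigwedge_{i \in I} \beta_i$. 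Consequently, the $\binom{n}{d}$ decomposable wedges
\[
  \{\beta_{i_1} \wedge \dots \wedge \beta_{i_d} \mid 1 \le i_1 < \dots < i_d \le n\}
\]
are linearly independent in $\bigwedge^d V_2$.

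To conclude, set $U := \operatorname{span}\{\beta_1, \dots, \beta_n\} \subseteq V_2$. All the above wedges lie in $\bigwedge^d U$, so $\binom{\dim U}{d} \ge \binom{n}{d}$; combined with $\dim U \le n$ and $1 \le d \le n-1$, this forces $\dim U = n$, so $\{\beta_1, \dots, \beta_n\}$ is a basis of $V_2$. The converse direction is obtained by swapping $V_1 \leftrightarrow V_2$ and running the argument with $\psi^{-1}$ in place of $\psi$. The main obstacle is the middle step: $\psi$ transports only the dimensions of the eigen-intersections, not the $\beta_i$'s themselves, so promoting the dimension-$1$ datum to linear independence of the relevant $\beta_i$'s requires the precise formula $\binom{n-h}{d-h}$ from Lemma \ref{lem-intersect-eigenspace} combined with the inheritance principle of Lemma \ref{lem-extra-eigen}; it is exactly the hypothesis $d \le n-1$ that makes the equation $\binom{n-h}{d-h} = 1$ force $h = d$.
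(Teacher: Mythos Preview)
Your proof is correct, and its overall architecture matches the paper's: invoke Lemma~\ref{lem-com-eigen-decomp} to decompose $\bigwedge^d V_2$ into $\binom{n}{d}$ one-dimensional eigen-intersections, show each summand is spanned by the corresponding wedge $\beta_{i_1}\wedge\dots\wedge\beta_{i_d}$, and finish by a dimension count on $\bigwedge^d U$.

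The genuine difference is in the middle step---how you establish that every $d$-subset $\{\beta_{i_1},\dots,\beta_{i_d}\}$ is linearly independent. The paper adjoins an extra index $i_{d+1}\in[n]\setminus\{i_1,\dots,i_d\}$ (possible because $d\le n-1$) and cites Lemma~\ref{lem-corr-basis}, whose proof uses that the $(d{+}1)$-fold intersection $\bigcap_{j\le d+1} V_{1,d,i_j}^-$ vanishes in $\bigwedge^d V_1$. You instead bypass Lemma~\ref{lem-corr-basis} entirely: knowing the $d$-fold intersection on the $V_2$ side is one-dimensional, you combine Lemma~\ref{lem-extra-eigen} with the dimension formula $\dim\bigcap_{j\le h} V_{2,d,i_j}^- = \binom{n-h}{d-h}$ from Lemma~\ref{lem-intersect-eigenspace} and solve $\binom{n-h}{d-h}=1$ for $h$. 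Both arguments ultimately rest on the same pair of lemmas and both use $d\le n-1$ at the critical moment; the paper's route is a bit cleaner to cite, while yours is more self-contained and makes the role of the inequality $d\le n-1$ in the binomial equation more visible.
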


\begin{proof}
  Without loss of generality, we may assume $j_1 = 1$, $j_2 = 2$, $\dots$, $j_n = n$.
  Then $\{\alpha_1, \dots, \alpha_n\}$ is a basis of $V_1$.
  Suppose the space $U := \mathbb{F} \langle \beta_1, \dots, \beta_n \rangle$ spanned by $\beta_1, \dots, \beta_n$ is a proper subspace of $V_2$, and $m := \dim U < n$.
  We may assume further that $\{\beta_1, \dots, \beta_m\}$ is a basis of $U$.

  For any indices $1 \le i_1 < \dots < i_d \le n$, there exists an index $i_{d+1} \in [n] \setminus \{i_1, \dots, i_d\}$ since $d \le n-1$.
  Note that the vectors $\alpha_{i_1}, \dots, \alpha_{i_d}, \alpha_{i_{d+1}}$ are linearly independent.
  By Lemma \ref{lem-corr-basis}, $\beta_{i_1}, \dots, \beta_{i_d}, \beta_{i_{d+1}} \in U$ are linearly independent as well.
  In particular, $\beta_{i_1} \wedge \dots \wedge \beta_{i_d} \ne 0$ and we have by Lemma \ref{lem-intersect-eigenspace} that
  \[\bigcap_{1 \le j \le d} V_{2, d, i_j}^- = \mathbb{F} \langle \beta_{i_1} \wedge \dots \wedge \beta_{i_d} \rangle \subseteq \bigwedge^d U. \]
  But then by Lemma \ref{lem-com-eigen-decomp} we have
  \[\bigwedge^d V_2 = \bigoplus_{1 \le i_1 < \dots < i_d \le n} \Bigl( \bigcap_{1 \le j \le d} V_{2,d,i_j}^- \Bigr) \subseteq \bigwedge^d U \subsetneq \bigwedge^d V_2\]
  which is a contradiction.
  Thus, we must have $U = V_2$, $m = n$, and $\{\beta_1, \dots, \beta_n\}$ is a basis of $V_2$.
\end{proof}

As a corollary, we have the following proposition.

\begin{proposition} \label{prop-tran-lin-ind}
  Suppose $h \le n$ and  $1 \le i_1, \dots, i_h \le k$.
  If $\alpha_{i_1}, \dots, \alpha_{i_h}$ are linearly independent, then so are $\beta_{i_1}, \dots, \beta_{i_h}$.
  In particular, if $\alpha_i$ and $\alpha_j$ are not proportional for some $1 \le i \ne j \le k$, then so are $\beta_i$, $\beta_j$.
\end{proposition}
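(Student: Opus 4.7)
The plan is to deduce Proposition \ref{prop-tran-lin-ind} from Proposition \ref{prop-corr-basis} by extending any linearly independent set of $\alpha_i$'s to a full basis of $V_1$ drawn from the reflection vectors, and then transferring the basis property to $V_2$.

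More precisely, I would proceed as follows. Given indices $i_1, \dots, i_h \in [k]$ with $h \le n$ such that $\alpha_{i_1}, \dots, \alpha_{i_h}$ are linearly independent, recall from Corollary \ref{cor-span-refl} that $V_1 = \sum_{j \in [k]} \mathbb{F} \alpha_j$. Hence the linearly independent family $\{\alpha_{i_1}, \dots, \alpha_{i_h}\}$ can be extended, by successively adjoining suitable $\alpha_j$'s from $\{\alpha_j : j \in [k]\}$, to a basis $\{\alpha_{j_1}, \dots, \alpha_{j_n}\}$ of $V_1$ in which $\{j_1, \dots, j_h\} = \{i_1, \dots, i_h\}$. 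Applying Proposition \ref{prop-corr-basis} to this basis then yields that $\{\beta_{j_1}, \dots, \beta_{j_n}\}$ is a basis of $V_2$. Since any subfamily of a basis is linearly independent, in particular $\beta_{i_1}, \dots, \beta_{i_h}$ are linearly independent, which is the claim.

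The ``in particular'' statement about pairs is the case $h = 2$: if $\alpha_i$ and $\alpha_j$ are not proportional (with $i \ne j$), they are linearly independent, and since $2 \le n$ (we are in the regime $2 \le d \le n-1$), the proposition applies to give linear independence of $\beta_i$ and $\beta_j$, i.e.\ non-proportionality.

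I do not anticipate a real obstacle here: the heavy lifting has already been done in Lemma \ref{lem-corr-basis} (the single case $h = d+1$, using Lemma \ref{lem-extra-eigen} to move the ``extra'' dependent vector into the intersection of eigenspaces) and Proposition \ref{prop-corr-basis} (bootstrapping from $h = d+1$ to $h = n$ via the decomposition of $\bigwedge^d V_2$ from Lemma \ref{lem-com-eigen-decomp}). The only step to verify is the extension-to-a-basis argument, which is immediate from Corollary \ref{cor-span-refl}.
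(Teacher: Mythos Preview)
Your proposal is correct and follows essentially the same approach as the paper: extend the given linearly independent set of $\alpha$'s to a basis of $V_1$ using Corollary \ref{cor-span-refl}, then apply Proposition \ref{prop-corr-basis}. The paper's proof is just a terser version of what you wrote.
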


\begin{proof}
  Recall Corollary \ref{cor-span-refl} that $V_1$ is spanned by $\alpha_1, \dots, \alpha_k$.
  Thus there exist reflection vectors $\alpha_{i_{h+1}}, \alpha_{i_{h+2}}, \dots, \alpha_{i_{n}}$ such that $\alpha_{i_1}, \dots, \alpha_{i_h}, \alpha_{i_{h+1}}, \dots, \alpha_{i_{n}}$ form a basis of $V_1$.
  Then use Proposition \ref{prop-corr-basis}.
\end{proof}

\subsection{Coincidence of the associated digraphs} \label{subsec-coinc-digraphs}

Recall in Definition \ref{def-assoc-digr} that a digraph is associated with any subset $I \subseteq [k]$ and any reflection representation.
For $\iota = 1, 2$, we denote temporarily by $G_\iota$  the associated graph to the full set $[k]$ and the representation $(V_\iota,\rho_\iota)$.
In this subsection we will prove that $G_1 = G_2$.

For two distinct indices $i, j \in [k]$, we set
\begin{alignat*}{2}
  s_i \cdot \alpha_j & = \alpha_j + x_{ji} \alpha_i, \quad & x_{ji} & \in \mathbb{F},  \\
  s_i \cdot \beta_j & = \beta_j + y_{ji} \beta_i, & y_{ji} & \in \mathbb{F}.
\end{alignat*}
Then $j \to i$ is an arrow in $G_1$, $G_2$ if and only if $x_{ji}$, $y_{ji} \ne 0$, respectively.

For distinct indices $1 \le i_1, \dots, i_d \le k$, if $\alpha_{i_1} \wedge \dots \wedge \alpha_{i_d} \ne 0$, that is, if $\alpha_{i_1}, \dots, \alpha_{i_d}$ are linearly independent, then by Proposition \ref{prop-tran-lin-ind}, the vectors $\beta_{i_1}, \dots, \beta_{i_d}$ are linearly independent as well.
Moreover, we have by Lemma \ref{lem-intersect-eigenspace}
\[\bigcap_{1 \le j \le d} V_{1,d,i_j}^- = \mathbb{F} \langle \alpha_{i_1} \wedge \dots \wedge \alpha_{i_d} \rangle, \quad \text{and} \quad \bigcap_{1 \le j \le d} V_{2,d,i_j}^- = \mathbb{F} \langle \beta_{i_1} \wedge \dots \wedge \beta_{i_d} \rangle.\]
Therefore, since $\psi: \bigwedge^d V_1 \xrightarrow{\sim} \bigwedge^d V_2$ is an isomorphism of $W$-modules, it holds
\[\psi(\alpha_{i_1} \wedge \dots \wedge \alpha_{i_d}) = \zeta_{i_1, \dots, i_d} \beta_{i_1} \wedge \dots \wedge \beta_{i_d}, \quad \text{for some } \zeta_{i_1, \dots, i_d} \in \mathbb{F}^\times.\]
By convention, we define $\zeta_{i_1, \dots, i_d} := 0$ if $\alpha_{i_1} \wedge \dots \wedge \alpha_{i_d} = 0$.

\begin{remark} \label{rmk-zeta-inde}
  Note that the coefficients $\zeta_{i_1, \dots, i_d}$ are independent of the order of $i_1, \dots, i_d$, that is, $\zeta_{i_1, \dots, i_d} = \zeta_{i_{\sigma(1)}, \dots, i_{\sigma(d)}}$ for any permutation $\sigma \in \mathfrak{S}_d$ (as $\gamma_{i_1, \dots, i_d}$ in Section \ref{sec-main1}).
\end{remark}

\begin{lemma} \label{lem-same-digraph}
  Suppose $i, j \in [k]$ and $i \ne j$.
  There exist distinct indices
  $i_2, \dots, i_{d} \in [k]$
  such that
  \begin{enumerate}
    \item \label{lem-same-digraph-con1} $\alpha_{i}, \alpha_j, \alpha_{i_2}, \dots, \alpha_{i_{d}}$ are linearly independent if $\alpha_i, \alpha_j$ are not proportional;
    \item \label{lem-same-digraph-con2} $\alpha_i, \alpha_{i_2}, \dots, \alpha_{i_{d}}$ are linearly independent if $\alpha_i, \alpha_j$ are proportional (thus in this case the vectors $\alpha_j, \alpha_{i_2}, \dots, \alpha_{i_{d}}$ are linearly independent as well).
  \end{enumerate}
\end{lemma}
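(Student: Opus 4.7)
The plan is to invoke Corollary~\ref{cor-span-refl}, which asserts $V_1 = \sum_{l \in [k]} \mathbb{F}\alpha_l$, together with the Steinitz exchange procedure: I will extend a carefully chosen linearly independent starting set to a basis of $V_1$ whose members all belong to the family $\{\alpha_l\}_{l \in [k]}$, and then select the indices $i_2, \dots, i_d$ from this extension. Throughout, the hypotheses $2 \le d \le n-1$ provide the numerical slack that allows the selection.

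For Case~\eqref{lem-same-digraph-con1}, the pair $\{\alpha_i, \alpha_j\}$ is linearly independent by assumption. Since the $\alpha_l$'s span $V_1$, I can iteratively adjoin members of $\{\alpha_l \mid l \in [k] \setminus \{i, j\}\}$ that lie outside the current span until a basis of $V_1$ is obtained. This produces pairwise distinct indices $l_1, \dots, l_{n-2} \in [k] \setminus \{i, j\}$ such that $\alpha_i, \alpha_j, \alpha_{l_1}, \dots, \alpha_{l_{n-2}}$ is a basis of $V_1$. Because $d + 1 \le n$, i.e.\ $d - 1 \le n - 2$, setting $i_m := l_{m-1}$ for $m = 2, \dots, d$ makes $\alpha_i, \alpha_j, \alpha_{i_2}, \dots, \alpha_{i_d}$ a subset of a basis, hence linearly independent.

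For Case~\eqref{lem-same-digraph-con2}, write $\alpha_j = c\,\alpha_i$ with $c \in \mathbb{F}^{\times}$. The key observation is that removing $\alpha_j$ from the spanning family does not alter the span, so $V_1 = \sum_{l \in [k] \setminus \{j\}} \mathbb{F}\alpha_l$ as well. Starting from the linearly independent singleton $\{\alpha_i\}$, the same exchange procedure applied to the smaller spanning family yields pairwise distinct indices $l_1, \dots, l_{n-1} \in [k] \setminus \{i, j\}$ for which $\alpha_i, \alpha_{l_1}, \dots, \alpha_{l_{n-1}}$ is a basis of $V_1$. Since $d \le n - 1$, taking $i_m := l_{m-1}$ for $m = 2, \dots, d$ gives the required linearly independent tuple $\alpha_i, \alpha_{i_2}, \dots, \alpha_{i_d}$; the parenthetical assertion about $\alpha_j, \alpha_{i_2}, \dots, \alpha_{i_d}$ is then immediate from $\alpha_j \in \mathbb{F}^{\times}\alpha_i$.

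I do not anticipate any substantive obstacle: the lemma is a direct combinatorial unfolding of the spanning property provided by Corollary~\ref{cor-span-refl}. The only point requiring a moment of care is the asymmetric treatment of $j$ in Case~\eqref{lem-same-digraph-con2}, where excluding $j$ from the spanning family is legitimated precisely by the proportionality $\alpha_j \in \mathbb{F}\alpha_i$; this exclusion is what guarantees the ``vice versa'' type statement $\alpha_j, \alpha_{i_2}, \dots, \alpha_{i_d}$ linearly independent without $j$ accidentally appearing among $i_2, \dots, i_d$.
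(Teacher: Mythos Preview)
Your proof is correct and follows exactly the approach the paper takes: the paper's proof is a one-line appeal to the two ingredients you use, namely $d+1 \le n$ and Corollary~\ref{cor-span-refl}, and you have simply unpacked the Steinitz-exchange argument that makes this work. Your care in excluding the index $j$ from the spanning family in Case~\eqref{lem-same-digraph-con2} is a nice touch that the paper leaves implicit.
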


\begin{proof}
  The existence of the required indices is ensured by the facts that $d+1 \le n$ and that $V_1$ is spanned by all the reflection vectors (Corollary \ref{cor-span-refl}).
\end{proof}

The following lemma is the key in this subsection.

\begin{lemma} \label{lem-same-digraph-2}
  Suppose $i, j \in [k]$ and $i \ne j$.
  Let  $i_2,  \dots, i_{d} \in [k]$ be any indices satisfying the conditions \eqref{lem-same-digraph-con1}\eqref{lem-same-digraph-con2} in Lemma \ref{lem-same-digraph}.
  Then we have
  \begin{equation}\label{eq-lem-same-digraph-2}
    \zeta_{i, i_2, \dots, i_{d}} y_{i j} = \zeta_{j, i_2, \dots, i_{d}} x_{i j}.
  \end{equation}
\end{lemma}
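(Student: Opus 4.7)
The plan is to exploit the $W$-equivariance of $\psi$: apply $s_j$ to the vector $v := \alpha_i \wedge \alpha_{i_2} \wedge \cdots \wedge \alpha_{i_d} \in \bigwedge^d V_1$ and compare the coefficient of $\beta_j \wedge \beta_{i_2} \wedge \cdots \wedge \beta_{i_d}$ in the two expressions $\psi(s_j \cdot v)$ and $s_j \cdot \psi(v)$. The two cases of Lemma~\ref{lem-same-digraph} are handled separately.

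In Case~\eqref{lem-same-digraph-con1}, the vectors $\alpha_i, \alpha_j, \alpha_{i_2}, \dots, \alpha_{i_d}$ are linearly independent, hence so are $\beta_i, \beta_j, \beta_{i_2}, \dots, \beta_{i_d}$ by Proposition~\ref{prop-tran-lin-ind}. Expanding
\[
s_j \cdot v = (\alpha_i + x_{ij}\alpha_j) \wedge (\alpha_{i_2} + x_{i_2 j}\alpha_j) \wedge \cdots \wedge (\alpha_{i_d} + x_{i_d j}\alpha_j)
\]
and using $\alpha_j \wedge \alpha_j = 0$, only three kinds of summands survive: the original $v$; the single ``replacement'' $x_{ij}\alpha_j \wedge \alpha_{i_2} \wedge \cdots \wedge \alpha_{i_d}$; and the ``mixed'' terms $x_{i_l j}\alpha_i \wedge \cdots \wedge \alpha_j \wedge \cdots \wedge \alpha_{i_d}$ (with $\alpha_j$ in position $l$) for $l = 2, \dots, d$. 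Applying $\psi$ and using Remark~\ref{rmk-zeta-inde}, each summand becomes a scalar multiple of a $\beta$-wedge indexed by a distinct $d$-subset of $\{i, j, i_2, \dots, i_d\}$; only the replacement term contributes to the target basis vector, producing $x_{ij}\,\zeta_{j,i_2,\dots,i_d}\,\beta_j \wedge \beta_{i_2} \wedge \cdots \wedge \beta_{i_d}$. On the other side, the analogous expansion of $s_j \cdot \psi(v) = \zeta_{i,i_2,\dots,i_d}\, s_j \cdot (\beta_i \wedge \beta_{i_2} \wedge \cdots \wedge \beta_{i_d})$ contributes $\zeta_{i,i_2,\dots,i_d}\, y_{ij}\,\beta_j \wedge \beta_{i_2} \wedge \cdots \wedge \beta_{i_d}$. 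Equating these coefficients yields \eqref{eq-lem-same-digraph-2}.

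In Case~\eqref{lem-same-digraph-con2}, write $\alpha_i = c\alpha_j$ with $c \in \mathbb{F}^\times$. Applying Proposition~\ref{prop-tran-lin-ind} to the isomorphism $\psi^{-1}$ (which has the same form, with $V_1$ and $V_2$ interchanged) forces $\beta_i = c'\beta_j$ for some $c' \in \mathbb{F}^\times$. A direct expansion of $s_j \cdot \alpha_i = c\lambda_j \alpha_j$ via Lemma~\ref{lem-refl} gives $x_{ij} = c(\lambda_j - 1)$, and likewise $y_{ij} = c'(\lambda_j - 1)$, using $\mu_j = \lambda_j$ from Proposition~\ref{prop-num}. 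On the other hand, applying $\psi$ to $\alpha_i \wedge \alpha_{i_2} \wedge \cdots \wedge \alpha_{i_d} = c\, \alpha_j \wedge \alpha_{i_2} \wedge \cdots \wedge \alpha_{i_d}$ and rewriting the image via $\beta_i = c'\beta_j$ yields $c'\,\zeta_{i,i_2,\dots,i_d} = c\,\zeta_{j,i_2,\dots,i_d}$. Multiplying through by $\lambda_j - 1$ (nonzero since $\lambda_j \ne 1$) gives \eqref{eq-lem-same-digraph-2}.

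The main obstacle is the coefficient bookkeeping in Case~\eqref{lem-same-digraph-con1}: one must verify that no ``mixed'' summand can contaminate the target coefficient. Since each mixed wedge retains $\beta_i$ as a factor, its index set $\{i, j, i_2, \dots, i_d\} \setminus \{i_l\}$ differs from $\{j, i_2, \dots, i_d\}$; linear independence of the full set $\{\beta_i, \beta_j, \beta_{i_2}, \dots, \beta_{i_d}\}$ — the stronger conclusion of Proposition~\ref{prop-tran-lin-ind} — then keeps the corresponding basis vectors distinct. This is precisely why that proposition had to be established in its general multi-vector form rather than only for pairs.
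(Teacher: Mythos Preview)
Your proof is correct and, in Case~\eqref{lem-same-digraph-con1}, follows exactly the paper's strategy: expand $s_j\cdot v$ and $s_j\cdot\psi(v)$, then compare the coefficient of $\beta_j\wedge\beta_{i_2}\wedge\cdots\wedge\beta_{i_d}$, using Proposition~\ref{prop-tran-lin-ind} to guarantee the relevant $\beta$-wedges are linearly independent. In Case~\eqref{lem-same-digraph-con2} you take a slightly more explicit route---writing $\alpha_i=c\alpha_j$, $\beta_i=c'\beta_j$ and computing $x_{ij}=c(\lambda_j-1)$, $y_{ij}=c'(\lambda_j-1)$ directly---whereas the paper continues with the same wedge expansion as in Case~\eqref{lem-same-digraph-con1} and simply observes that all mixed terms $\beta_i\wedge\cdots\wedge\beta_j$ vanish by proportionality, leaving the same coefficient comparison; both arguments are equally short and valid.
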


\begin{proof}
  We consider
  \begin{align}
    & \mathrel{\phantom{=}} s_j \cdot (\psi (\alpha_i \wedge \alpha_{i_2} \wedge \dots \wedge \alpha_{i_{d}})) \notag \\
    & = \zeta_{i, i_2, \dots, i_{d}} s_j \cdot (\beta_i \wedge \beta_{i_2} \wedge \dots \wedge \beta_{i_{d}}) \notag \\
    & = \zeta_{i, i_2, \dots, i_{d}} (\beta_i + y_{ij} \beta_j) \wedge (\beta_{i_2} + y_{i_2 j} \beta_j) \wedge \dots \wedge (\beta_{i_d} + y_{i_d j} \beta_j) \notag \\
    & = \zeta_{i, i_2, \dots, i_{d}} (\beta_i \wedge \beta_{i_2} \wedge \dots \wedge \beta_{i_{d}} + y_{ij} \beta_j \wedge \beta_{i_2} \wedge \dots \wedge \beta_{i_{d}}) \label{eq-lem-same-digraph-2-1} \\
    & \mathrel{\phantom{=}} + \sum_{2 \le l \le d} (-1)^{d-l} \zeta_{i, i_2, \dots, i_d} y_{i_l j} \beta_i \wedge \beta_{i_2} \wedge \dots \wedge \widehat{\beta}_{i_l} \wedge \dots \wedge \beta_{i_d} \wedge \beta_{j}\notag
  \end{align}
  which also equals
  \begin{align}
    & \mathrel{\phantom{=}} \psi (s_j \cdot (\alpha_i \wedge \alpha_{i_2} \wedge \dots \wedge \alpha_{i_d})) \notag \\
    & = \psi ((\alpha_i + x_{ij} \alpha_j) \wedge (\alpha_{i_2} + x_{i_2 j} \alpha_j) \wedge \dots \wedge (\alpha_{i_d} + x_{i_d j} \alpha_j)) \notag \\
    & = \psi \Bigl(\alpha_i \wedge \alpha_{i_2} \wedge \dots \wedge \alpha_{i_d} + x_{ij} \alpha_j \wedge \alpha_{i_2} \wedge \dots \wedge \alpha_{i_d} \notag \\
    & \mathrel{\phantom{=}} \phantom{\psi \Bigl(} + \sum_{2 \le l \le d} (-1)^{d-l} x_{i_l j} \alpha_i \wedge \alpha_{i_2} \wedge \dots \wedge \widehat{\alpha}_{i_l} \wedge \dots \wedge \alpha_{i_d} \wedge \alpha_{j} \Bigr) \notag \\
    & = \zeta_{i, i_2, \dots, i_d} \beta_i \wedge \beta_{i_2} \wedge \dots \wedge \beta_{i_d} + \zeta_{j, i_2, \dots, i_d} x_{ij} \beta_j \wedge \beta_{i_2} \wedge \dots \wedge \beta_{i_d} \label{eq-lem-same-digraph-2-2} \\
    & \mathrel{\phantom{=}} + \sum_{2 \le l \le d} (-1)^{d-l} \zeta_{i, i_2, \dots, \widehat{i}_l, \dots, i_d, j} x_{i_l j} \beta_i \wedge \beta_{i_2} \wedge \dots \wedge \widehat{\beta}_{i_l} \wedge \dots \wedge \beta_{i_d} \wedge \beta_j.  \notag
  \end{align}

  If $\alpha_i, \alpha_j$ are not proportional, then $\alpha_i, \alpha_j, \alpha_{i_2}, \dots, \alpha_{i_d}$ are linearly independent by our assumption, and then so are $\beta_i, \beta_j, \beta_{i_2}, \dots, \beta_{i_d}$ by Proposition \ref{prop-tran-lin-ind} (or Lemma \ref{lem-corr-basis}).
  Therefore, the vectors occurring in \eqref{eq-lem-same-digraph-2-1} and \eqref{eq-lem-same-digraph-2-2} are nonzero and linearly independent.
  By comparing the coefficients of $\beta_j \wedge \beta_{i_2} \wedge \dots \wedge \beta_{i_d}$ in \eqref{eq-lem-same-digraph-2-1} and \eqref{eq-lem-same-digraph-2-2} we see that the desired Equation \eqref{eq-lem-same-digraph-2} holds.

  If $\alpha_i, \alpha_j$ are proportional, then so are $\beta_i, \beta_j$ by Proposition \ref{prop-tran-lin-ind}, and the summations in \eqref{eq-lem-same-digraph-2-1} and \eqref{eq-lem-same-digraph-2-2} vanish, that is,
  \begin{align*}
    \eqref{eq-lem-same-digraph-2-1} & = \zeta_{i, i_2, \dots, i_d} \beta_i \wedge \beta_{i_2} \wedge \dots \wedge \beta_{i_d} + \zeta_{i, i_2, \dots, i_d} y_{ij} \beta_j \wedge \beta_{i_2} \wedge \dots \wedge \beta_{i_d}, \\
    \eqref{eq-lem-same-digraph-2-2} & = \zeta_{i, i_2, \dots, i_d} \beta_i \wedge \beta_{i_2} \wedge \dots \wedge \beta_{i_d} + \zeta_{j, i_2, \dots, i_d} x_{ij} \beta_j \wedge \beta_{i_2} \wedge \dots \wedge \beta_{i_d}.
  \end{align*}
  As pointed out in Lemma \ref{lem-same-digraph}, the vectors $\alpha_j, \alpha_{i_2}, \dots, \alpha_{i_d}$ are linearly independent, and so are $\beta_j, \beta_{i_2}, \dots, \beta_{i_d}$ by Proposition \ref{prop-tran-lin-ind}.
  Therefore, we have Equation \eqref{eq-lem-same-digraph-2} again by comparing the two equations above.
\end{proof}

Note that in Equation \eqref{eq-lem-same-digraph-2} the coefficients $\zeta_{i, i_2, \dots, i_d}$ and $\zeta_{j, i_2, \dots, i_d}$ are nonzero.
Therefore we have the following corollary.

\begin{corollary} \label{cor-x-y}
  Suppose $i, j \in [k]$ and $i \ne j$.
  Then $x_{ij}$ and $y_{ij}$ are equal or not equal to zero simultaneously, that is, either $x_{ij} = y_{ij} = 0$ or $x_{ij} y_{ij} \ne 0$.
\end{corollary}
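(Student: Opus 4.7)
The plan is to deduce this corollary directly from Lemma~\ref{lem-same-digraph-2}, using Lemma~\ref{lem-same-digraph} only to guarantee that the relevant $\zeta$-coefficients are nonzero. The essential content has already been packaged in the key identity $\zeta_{i,i_2,\dots,i_d}\,y_{ij} = \zeta_{j,i_2,\dots,i_d}\,x_{ij}$; the corollary is just its qualitative consequence.

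Concretely, I would fix distinct $i,j \in [k]$ and invoke Lemma~\ref{lem-same-digraph} to produce indices $i_2,\dots,i_d \in [k]$ satisfying condition \eqref{lem-same-digraph-con1} (if $\alpha_i,\alpha_j$ are not proportional) or condition \eqref{lem-same-digraph-con2} (if they are). Such indices exist since $d+1 \le n$ and the reflection vectors span $V_1$ by Corollary~\ref{cor-span-refl}. In either case, Lemma~\ref{lem-same-digraph} guarantees that both $\alpha_i,\alpha_{i_2},\dots,\alpha_{i_d}$ and $\alpha_j,\alpha_{i_2},\dots,\alpha_{i_d}$ are linearly independent, so the wedges $\alpha_i \wedge \alpha_{i_2} \wedge \cdots \wedge \alpha_{i_d}$ and $\alpha_j \wedge \alpha_{i_2} \wedge \cdots \wedge \alpha_{i_d}$ are both nonzero. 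By the very definition of the scalars $\zeta$ preceding Remark~\ref{rmk-zeta-inde}, this means $\zeta_{i,i_2,\dots,i_d}$ and $\zeta_{j,i_2,\dots,i_d}$ both lie in $\mathbb{F}^\times$.

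Then I would apply Lemma~\ref{lem-same-digraph-2} to this choice of indices, obtaining
\[
\zeta_{i,i_2,\dots,i_d}\,y_{ij} \;=\; \zeta_{j,i_2,\dots,i_d}\,x_{ij}.
\]
Since both $\zeta$-coefficients are nonzero, the right-hand side vanishes iff $x_{ij}=0$, and the left-hand side vanishes iff $y_{ij}=0$; hence $x_{ij}=0 \Longleftrightarrow y_{ij}=0$, which is exactly the corollary. There is no real obstacle here, all the substantive work having been done in Lemma~\ref{lem-same-digraph-2}; this corollary simply translates that identity into the statement that the arrow $j \to i$ occurs in $G_1$ if and only if it occurs in $G_2$, setting the stage for the conclusion $G_1 = G_2$ in Subsection~\ref{subsec-coinc-digraphs}.
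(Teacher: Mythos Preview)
Your proposal is correct and follows exactly the paper's approach: the paper simply notes that the coefficients $\zeta_{i,i_2,\dots,i_d}$ and $\zeta_{j,i_2,\dots,i_d}$ in Equation~\eqref{eq-lem-same-digraph-2} are nonzero, and the corollary follows immediately. (One tiny slip in your closing commentary: $x_{ij}\ne 0$ corresponds to the arrow $i\to j$, not $j\to i$, but this does not affect the argument.)
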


By the definition of the associated digraphs $G_1$ and $G_2$, Corollary \ref{cor-x-y} implies

\begin{corollary}
  $G_1 = G_2$.
\end{corollary}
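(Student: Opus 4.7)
The plan is essentially a one-line unpacking of definitions followed by a direct appeal to the preceding Corollary \ref{cor-x-y}. By Definition \ref{def-assoc-digr}, both $G_1$ and $G_2$ are digraphs on the common vertex set $[k]$, so establishing $G_1 = G_2$ reduces to verifying that their arrow sets coincide.

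For each pair of distinct indices $i, j \in [k]$, I would chain the following equivalences: the arrow $j \to i$ lies in $G_1$ if and only if $s_i \cdot \alpha_j \ne \alpha_j$, which, in view of the expansion $s_i \cdot \alpha_j = \alpha_j + x_{ji} \alpha_i$ coming from Lemma \ref{lem-refl}, is equivalent to $x_{ji} \ne 0$. The parallel chain for $G_2$ gives: $j \to i \in G_2$ if and only if $y_{ji} \ne 0$. Corollary \ref{cor-x-y}, applied to the pair $(j, i)$, asserts precisely that $x_{ji}$ and $y_{ji}$ vanish simultaneously, so the biconditional $j \to i \in G_1 \iff j \to i \in G_2$ holds for every ordered pair of distinct indices, and the two digraphs coincide.

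There is effectively no obstacle at this step. All substantive work was carried out earlier: Lemma \ref{lem-same-digraph-2} exploited the $W$-intertwining property $\psi \circ s_j = s_j \circ \psi$ on wedges of the form $\alpha_i \wedge \alpha_{i_2} \wedge \dots \wedge \alpha_{i_d}$ (with auxiliary indices supplied by Lemma \ref{lem-same-digraph}) to derive the proportionality $\zeta_{i, i_2, \dots, i_d} y_{ij} = \zeta_{j, i_2, \dots, i_d} x_{ij}$ with the $\zeta$-factors nonzero; Corollary \ref{cor-x-y} then fell out at once. The present corollary is simply the reformulation of that numerical fact in graph-theoretic language.
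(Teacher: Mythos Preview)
Your proposal is correct and mirrors the paper's own reasoning exactly: the paper simply remarks that, by the definition of the associated digraphs, Corollary~\ref{cor-x-y} immediately yields $G_1 = G_2$, which is precisely the unpacking you carry out.
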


From now on, we recover the notation $G = G_{[k]}$ to indicate uniformly the digraphs $G_1$ and $G_2$, and $G_I$ to be the sub-digraph spanned by a subset $I \subseteq [k]$ (see Definition \ref{def-assoc-digr}).

We will also need the following corollary of Lemma \ref{lem-same-digraph-2}.

\begin{corollary} \label{cor-same-digraph-2}
  Let $i, j, i_2, \dots, i_d \in [k]$ be as in Lemma \ref{lem-same-digraph-2}.
  Suppose $x_{ij} \ne 0$ and $y_{ji} \ne 0$ (equivalently, $y_{ij} \ne 0$ and $x_{ji} \ne 0$).
  Then we have
  \[\frac{y_{ij}}{x_{ij}}  = \frac{\zeta_{j, i_2, \dots, i_d}}{\zeta_{i, i_2, \dots, i_d}} = \frac{x_{ji}}{y_{ji}}.\]
\end{corollary}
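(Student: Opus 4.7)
The plan is to derive both equalities by applying Lemma \ref{lem-same-digraph-2} twice: once as stated, and once after interchanging the roles of $i$ and $j$.

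First I would note that the hypotheses of Lemma \ref{lem-same-digraph} are symmetric in $i$ and $j$: in case \eqref{lem-same-digraph-con1} the linear independence of $\alpha_i, \alpha_j, \alpha_{i_2}, \dots, \alpha_{i_d}$ is invariant under swapping $i \leftrightarrow j$, and in case \eqref{lem-same-digraph-con2} the statement explicitly ensures that both $\alpha_i, \alpha_{i_2}, \dots, \alpha_{i_d}$ and $\alpha_j, \alpha_{i_2}, \dots, \alpha_{i_d}$ are linearly independent. Hence the same tuple $(i_2, \dots, i_d)$ is admissible for Lemma \ref{lem-same-digraph-2} both for the ordered pair $(i,j)$ and for $(j,i)$.

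Applying Lemma \ref{lem-same-digraph-2} to $(i,j)$ with the given $i_2, \dots, i_d$ gives
\[
  \zeta_{i, i_2, \dots, i_d}\, y_{ij} \;=\; \zeta_{j, i_2, \dots, i_d}\, x_{ij}.
\]
Since $x_{ij} \ne 0$ by hypothesis and $\zeta_{i, i_2, \dots, i_d} \ne 0$ by construction (as $\alpha_i, \alpha_{i_2}, \dots, \alpha_{i_d}$ are linearly independent in both cases of Lemma \ref{lem-same-digraph}), we may rearrange to obtain the first equality
\[
  \frac{y_{ij}}{x_{ij}} \;=\; \frac{\zeta_{j, i_2, \dots, i_d}}{\zeta_{i, i_2, \dots, i_d}}.
\]

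Next I would apply Lemma \ref{lem-same-digraph-2} with the roles of $i$ and $j$ reversed (valid by the symmetry just noted). This yields
\[
  \zeta_{j, i_2, \dots, i_d}\, y_{ji} \;=\; \zeta_{i, i_2, \dots, i_d}\, x_{ji},
\]
where I am using Remark \ref{rmk-zeta-inde} to identify the coefficients across the two applications (the $\zeta$'s depend only on the underlying unordered set of indices). Since $y_{ji} \ne 0$ by hypothesis, rearranging gives
\[
  \frac{x_{ji}}{y_{ji}} \;=\; \frac{\zeta_{j, i_2, \dots, i_d}}{\zeta_{i, i_2, \dots, i_d}},
\]
which combined with the first equality completes the proof. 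There is no real obstacle here; the only small point to verify is the symmetry of the linear independence hypothesis under $i \leftrightarrow j$, which is immediate.
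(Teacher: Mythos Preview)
Your proof is correct and follows essentially the same approach as the paper: apply Lemma~\ref{lem-same-digraph-2} once to obtain the first equality, then swap $i$ and $j$ to obtain the second. You simply spell out in more detail why the hypotheses of Lemma~\ref{lem-same-digraph-2} are symmetric in $i$ and $j$, which the paper leaves implicit.
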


\begin{proof}
  The first desired equality is nothing but Equation \eqref{eq-lem-same-digraph-2}.
  By swapping the indices $i$ and $j$ in Equation \eqref{eq-lem-same-digraph-2}, we obtain the second equality.
\end{proof}

\subsection{The linear isomorphism \texorpdfstring{$f$}{f} from \texorpdfstring{$V_1$}{V1} to \texorpdfstring{$V_2$}{V2}} \label{subsec-f}

Remember that our final goal is to find an isomorphism $f : V_1 \xrightarrow{\sim} V_2$ of $W$-modules.
For this, let us introduce some notations.

By applying Lemma \ref{lem-GI} to the reflection representation $(V_1, \rho_1)$, we choose and fix a subset $I \subseteq [k]$ such that
\begin{enumerate}
  \item $G_I$ is weakly connected, and
  \item $\{\alpha_i \mid i \in I\}$ is a basis of $V_1$.
\end{enumerate}
Then by Proposition \ref{prop-corr-basis}, $\{\beta_i \mid i \in I\}$ is a basis of $V_2$.

For two indices $i, j \in I$ such that  $i \ne j$ and either $i \to j$ or $j \to i$ is an arrow in $G_I$, we define
\[z_{ij} :=
    \begin{cases}
      \frac{y_{ij}}{x_{ij}}, & \text{if $i \to j$ is an arrow} \\
      \frac{x_{ji}}{y_{ji}}, & \text{if $j \to i$ is an arrow}.
    \end{cases}\]
By Corollary \ref{cor-same-digraph-2}, we have $\frac{y_{ij}}{x_{ij}} = \frac{x_{ji}}{y_{ji}}$ if both $i \to j$ and $j \to i$ are arrows.
So the element $z_{ij} \in \mathbb{F}^\times$ is well defined.
We have the following lemma.

\begin{lemma} \label{lem-prod-zij}
  Let $h \ge 1$ be an integer and
  \[i_0, \quad a_1, \quad i_1, \quad a_2, \quad i_2, \quad \dots \quad i_{h-1}, \quad a_h, \quad i_h\]
  be an undirected walk in $G_I$.
  For any $p \in [h]$ and any distinct indices $j_2, \dots, j_{d} \in I \setminus \{i_0, i_p\}$ (if $i_0 = i_p$ then we regard $\{i_0, i_p\} = \{i_0\}$) we have
    \begin{equation}\label{eq-claim}
      z_{i_0 i_1} \cdots z_{i_{p-1} i_p} = \frac{\zeta_{i_p, j_2, \dots, j_d}}{\zeta_{i_0, j_2, \dots, j_d}}.
    \end{equation}
\end{lemma}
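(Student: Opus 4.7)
The plan is to prove the formula \eqref{eq-claim} by induction on $p \in [h]$, where the inductive hypothesis asserts the identity for every length $p' < p$ and every admissible auxiliary tuple. The key tool throughout is Lemma \ref{lem-same-digraph-2}, which combined with the definition of $z_{ij}$ yields the following \emph{step formula}: for adjacent $i, j \in I$ and any distinct $k_2, \dots, k_d \in I \setminus \{i, j\}$, one has $\zeta_{j, k_2, \dots, k_d}/\zeta_{i, k_2, \dots, k_d} = z_{ij}$, regardless of the direction of the arrow. The base case $p = 1$ is a direct application of this step formula to the adjacent pair $(i_0, i_1)$ with auxiliary tuple $(j_2, \dots, j_d)$; the required linear independence is automatic because all indices lie in the basis-indexing set $I$ and are distinct.

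For the inductive step, fix the walk and a tuple $J = (j_2, \dots, j_d)$ with $\{j_2, \dots, j_d\} \subseteq I \setminus \{i_0, i_p\}$, and analyze according to the position of $i_{p-1}$. In the straightforward case $i_{p-1} \notin \{j_2, \dots, j_d\}$: if moreover $i_{p-1} \ne i_0$, then $J$ itself is admissible for the length-$(p-1)$ hypothesis, and multiplying by the step formula on the final edge $(i_{p-1}, i_p)$ telescopes to the desired identity; if instead $i_{p-1} = i_0$, the hypothesis applied to any admissible $(d-1)$-tuple forces $\prod_{q=1}^{p-1} z_{i_{q-1} i_q} = \zeta_{i_0, \cdot}/\zeta_{i_0, \cdot} = 1$, and the step formula for the last edge directly yields the remaining factor.

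The main obstacle is the case $i_{p-1} \in \{j_2, \dots, j_d\}$, say $i_{p-1} = j_l$, because then $J$ is no longer admissible at length $p-1$ and direct telescoping fails. The plan here is to substitute $j_l$ by another index and reconcile the resulting expressions via the symmetry of $\zeta$ in its indices (Remark \ref{rmk-zeta-inde}). When $i_0 \ne i_p$, I would set $J' := (\{j_2, \dots, j_d\} \setminus \{j_l\}) \cup \{i_p\}$, which lies in $I \setminus \{i_0, i_{p-1}\}$; the hypothesis then gives $\prod_{q=1}^{p-1} z_{i_{q-1} i_q} = \zeta_{i_{p-1}, J'}/\zeta_{i_0, J'}$, and the crucial identity $\zeta_{i_{p-1}, J'} = \zeta_{i_p, J}$ holds by symmetry, since $i_{p-1} = j_l$ makes the two $d$-subsets of indices equal. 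Applying the step formula on the final edge $(j_l, i_p)$ with auxiliary tuple $(i_0, j_2, \dots, \widehat{j_l}, \dots, j_d)$ and invoking symmetry again produces $z_{j_l i_p} = \zeta_{i_0, J'}/\zeta_{i_0, J}$, so the two factors multiply to $\zeta_{i_p, J}/\zeta_{i_0, J}$ as required. If $i_0 = i_p$ the substitute $i_p$ is unavailable, but since $|\{i_0, i_{p-1}\} \cup \{j_2, \dots, j_d\}| \le d < n$ one can pick any $j^* \in I \setminus (\{i_0, i_{p-1}\} \cup \{j_2, \dots, j_d\})$ and run the parallel argument with $J' := (J \setminus \{j_l\}) \cup \{j^*\}$; both sides reduce to $1$. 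The subtle points of the argument are the substitution trick in this last case and the repeated use of the symmetry of $\zeta$ to reidentify the numerators and denominators after substitution.
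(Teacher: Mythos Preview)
Your proposal is correct and follows essentially the same approach as the paper: induction on $p$, with the same three-way case split according to whether $i_{p-1}$ lies in $\{j_2,\dots,j_d\}$ and whether $i_0=i_p$, the same substitution $J'=(J\setminus\{j_l\})\cup\{i_p\}$ (respectively $\cup\{j^*\}$) in the obstructed cases, and the same use of the symmetry of $\zeta$ to cancel. The only difference is cosmetic: you add a separate sub-case $i_{p-1}=i_0$ inside the ``straightforward'' case, but this is unnecessary since the admissibility of $J$ for the length-$(p-1)$ hypothesis already holds there (as $\{i_0,i_{p-1}\}=\{i_0\}$ and $i_0\notin J$ by assumption).
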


Note that  $\lvert I \setminus \{i_0, i_p\} \rvert \ge n - 2 \ge d - 1$.
Therefore such indices $j_2, \dots, j_{d}$ exist.
Note also that $\{\alpha_j \mid j \in I\}$ is a basis for $V_1$.
Thus $\zeta_{i_0, j_2, \dots, j_d}$ and $\zeta_{i_p, j_2, \dots, j_d}$ are nonzero.

\begin{proof}
  We prove by induction on $p$.
  Suppose first that $p = 1$.
  Then the desired equality $z_{i_0 i_1} = \zeta_{i_1, j_2, \dots, j_d} / \zeta_{i_0, j_2, \dots, j_d}$ follows from Corollary \ref{cor-same-digraph-2}.

  Suppose now $p \ge 2$.
  The induction hypothesis reads
  \[z_{i_0 i_1} \cdots z_{i_{p-2} i_{p-1}} = \frac{\zeta_{i_{p-1}, j_2, \dots, j_d}}{\zeta_{i_0, j_2, \dots, j_d}} \text{ for any distinct $j_2, \dots, j_d \in I \setminus \{i_0, i_{p-1}\}$.}\]
  For distinct indices $j_2, \dots, j_d \in I \setminus \{i_0, i_p\}$ given arbitrarily, we have three cases.

  \emph{Case one}: $i_{p-1} \notin \{j_2, \dots, j_d\}$.
  By the same arguments as in the beginning case ``$p = 1$'', we have
  \[z_{i_{p-1} i_p} = \frac{\zeta_{i_p, j_2, \dots, j_d}}{\zeta_{i_{p-1}, j_2, \dots, j_d}}\]
  whenever $a_p = i_{p-1} \to i_p$ or $a_p = i_p \to i_{p-1}$.
  Therefore by induction hypothesis we have
  \[z_{i_0 i_1} \cdots z_{i_{p-2} i_{p-1}} z_{i_{p-1} i_p} = \frac{\zeta_{i_{p-1}, j_2, \dots, j_d}}{\zeta_{i_0, j_2, \dots, j_d}} \cdot \frac{\zeta_{i_p, j_2, \dots, j_d}}{\zeta_{i_{p-1}, j_2, \dots, j_d}} = \frac{\zeta_{i_p, j_2, \dots, j_d}}{\zeta_{i_0, j_2, \dots, j_d}} \]
  as claimed in Equation \eqref{eq-claim}.

  \emph{Case two}: $i_{p-1} \in \{j_2, \dots, j_d\}$ and $i_0 \ne i_p$.
  We may assume $i_{p-1} = j_2$.
  Then $i_0$, $i_p$, $i_{p-1}$ ($= j_2$), $j_3$, $\dots$, $j_d$ are distinct.
  We have
  \[z_{i_0 i_1} \cdots z_{i_{p-2} i_{p-1}} = \frac{\zeta_{i_{p-1}, i_p, j_3, \dots, j_d}}{\zeta_{i_0, i_p, j_3, \dots, j_d}}\]
  by applying induction hypothesis to the indices $i_p, j_3, \dots, j_d$, and
  \[z_{i_{p-1} i_p} = \frac{\zeta_{i_p, i_0, j_3, \dots, j_d}}{\zeta_{i_{p-1}, i_0, j_3, \dots, j_d}}\]
  by the same  arguments as in the beginning case ``$p=1$''.
  Therefore,
  \begin{equation} \label{eq-claim-1}
    z_{i_0 i_1} \cdots z_{i_{p-2} i_{p-1}} z_{i_{p-1} i_p} = \frac{\zeta_{i_{p-1}, i_p, j_3, \dots, j_d}}{\zeta_{i_0, i_p, j_3, \dots, j_d}} \cdot \frac{\zeta_{i_p, i_0, j_3, \dots, j_d}}{\zeta_{i_{p-1}, i_0, j_3, \dots, j_d}}.
  \end{equation}
  Note that $\zeta_{i_0, i_p, j_3, \dots, j_d} = \zeta_{i_p, i_0, j_3, \dots, j_d}$ (see Remark \ref{rmk-zeta-inde}).
  Therefore Equation \eqref{eq-claim-1} reduces to
  \[z_{i_0 i_1} \cdots z_{i_{p-2} i_{p-1}} z_{i_{p-1} i_p} = \frac{\zeta_{i_{p-1}, i_p, j_3, \dots, j_d}}{\zeta_{i_{p-1}, i_0, j_3, \dots, j_d}} = \frac{\zeta_{i_p, j_2, j_3, \dots, j_d}}{\zeta_{i_0, j_2, j_3, \dots, j_d}}\]
  which is what we want.

  \emph{Case three}: $i_{p-1} \in \{j_2, \dots, j_d\}$ and $i_0 = i_p$.
  Then our goal Equation \eqref{eq-claim} becomes
  \begin{equation}\label{eq-claim-2}
    z_{i_0 i_1} \cdots z_{i_{p-1} i_p} = 1.
  \end{equation}
  We can still assume $i_{p-1} = j_2$.
  Note that in this case $i_0$ ($= i_p$), $i_{p-1}$ ($= j_2$), $j_3, \dots, j_d$ are distinct $d$ indices.
  But $d \le n-1$, so there exists an extra index $j \in I \setminus \{i_0, j_2, j_3, \dots, j_d\}$.
  Similar to the former cases, we have
  \[z_{i_0 i_1} \cdots z_{i_{p-2} i_{p-1}} = \frac{\zeta_{i_{p-1}, j, j_3, \dots, j_d}}{\zeta_{i_0, j, j_3, \dots, j_d}}\]
  and
  \[z_{i_{p-1} i_p} = \frac{\zeta_{i_p, j, j_3, \dots, j_d}}{\zeta_{i_{p-1}, j, j_3, \dots, j_d}}.\]
  Therefore, we have
  \[z_{i_0 i_1} \cdots z_{i_{p-2} i_{p-1}} z_{i_{p-1} i_p} = \frac{\zeta_{i_{p-1}, j, j_3, \dots, j_d}}{\zeta_{i_0, j, j_3, \dots, j_d}} \cdot \frac{\zeta_{i_p, j, j_3, \dots, j_d}}{\zeta_{i_{p-1}, j, j_3, \dots, j_d}} = \frac{\zeta_{i_p, j, j_3, \dots, j_d}}{\zeta_{i_0, j, j_3, \dots, j_d}} = 1\]
  which is exactly Equation \eqref{eq-claim-2}.
\end{proof}

Now we are ready to construct a linear map $f$ from $V_1$ to $V_2$.
Such a map is determined by vectors $\{f(\alpha_i) \in V_2 \mid i \in I\}$ since $\{\alpha_i \in V_1 \mid i \in I\}$ is a basis of $V_1$.
Because $\alpha_i$ and $\beta_i$ are the reflection vectors of $s_i$ in $V_1$ and $V_2$ respectively, we are excepted to have
\[f(\alpha_i) = z_i \beta_i, \text{ for some } z_i \in \mathbb{F}^\times \text{ and each } i \in I.\]
Below we propose a choice of the coefficients $z_i$.

From now on we fix an index $i_0 \in I$ and set $z_{i_0} := 1$.
For any other index $i \in I$, we choose an undirected walk in $G_I$ from $i_0$ to $i$, say,
\begin{equation*}
  i_0, \quad  a_1, \quad  i_1, \quad  a_2, \quad  i_2, \quad \dots, \quad  i_{l-1}, \quad  a_l, \quad   i_l  = i,
\end{equation*}
where $i_0, i_1, \dots, i_l \in I$.
Then $z_i$ is defined to be
\[z_i := z_{i_0 i_1} z_{i_1 i_2} \cdots z_{i_{l-1} i_l} \in \mathbb{F}^\times.\]
We need to show that $z_i$ such defined is independent of the choice of the undirected walk (but it does depend on the choice of the beginning vertex $i_0$).

\begin{proposition} \label{prop-zi}
  Fix $i_0 \in I$ as above.
  For each $i \in I$, the value of $z_i$ only depends on $i$, not on the choice of the undirected walk from $i_0$ to $i$.
\end{proposition}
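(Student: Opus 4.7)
The plan is to observe that Proposition \ref{prop-zi} is essentially an immediate consequence of Lemma \ref{lem-prod-zij}, which has already done all the real work. Indeed, Lemma \ref{lem-prod-zij} expresses the product $z_{i_0 i_1} \cdots z_{i_{p-1} i_p}$ along any undirected walk from $i_0$ to $i_p$ as the ratio $\zeta_{i_p, j_2, \dots, j_d}/\zeta_{i_0, j_2, \dots, j_d}$, and this ratio manifestly does not depend on the walk.

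More precisely, I would argue as follows. Fix $i \in I$ and suppose
\[i_0, \ a_1, \ i_1, \ \dots, \ a_l, \ i_l = i \quad \text{and} \quad i_0, \ a_1', \ i_1', \ \dots, \ a_{l'}', \ i_{l'}' = i\]
are two undirected walks in $G_I$ from $i_0$ to $i$, giving candidate values
\[z_{i_0 i_1} z_{i_1 i_2} \cdots z_{i_{l-1} i_l} \quad \text{and} \quad z_{i_0 i_1'} z_{i_1' i_2'} \cdots z_{i_{l'-1}' i_{l'}'}.\]
Since $|I| = n$ and $d \le n-1$, we have $|I \setminus \{i_0, i\}| \ge n-2 \ge d-1$ (with the convention $\{i_0, i_p\} = \{i_0\}$ when $i = i_0$, which gives $|I \setminus \{i_0\}| \ge n-1 \ge d$); in particular we may choose distinct indices $j_2, \dots, j_d \in I \setminus \{i_0, i\}$. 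Applying Lemma \ref{lem-prod-zij} to each walk with this common choice of $j_2, \dots, j_d$, both products equal $\zeta_{i, j_2, \dots, j_d}/\zeta_{i_0, j_2, \dots, j_d}$, hence they are equal.

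The only mild subtlety, which I would flag explicitly, is the degenerate case $i = i_0$, where one needs a closed undirected walk to give back $1$; but this is covered by Case three in the proof of Lemma \ref{lem-prod-zij}, so no additional argument is required. There is essentially no obstacle once Lemma \ref{lem-prod-zij} is in hand; the technical content of the proposition has been absorbed into that lemma, and Proposition \ref{prop-zi} is really just the statement that the right-hand side of the formula there is independent of the walk.
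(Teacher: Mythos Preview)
Your proposal is correct and essentially the same as the paper's proof: both deduce the proposition directly from Lemma \ref{lem-prod-zij}. The only cosmetic difference is that you apply the lemma separately to each of the two walks (obtaining the common value $\zeta_{i, j_2, \dots, j_d}/\zeta_{i_0, j_2, \dots, j_d}$), whereas the paper concatenates the first walk with the reverse of the second into a single closed walk from $i_0$ to $i_0$, uses $z_{ij} = z_{ji}^{-1}$, and then applies the lemma once to get the product equal to $1$.
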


\begin{proof}
  Suppose there exist two undirected walks in $G_I$ from $1$ to $i$, say ($h > l$),
  \begin{alignat}{9}
    &  i_0, \quad & & a_1, \quad & & i_1, \quad & & a_2, \quad & & i_2, \quad & & \dots, \quad & & i_{l-1}, \quad & & a_l, \quad &  i_l & = i, \label{eq-walk-1} \\
    &  i_0, \quad & & a_h, \quad & & i_{h-1}, \quad & & a_{h-1}, \quad & & i_{h-2}, \quad & & \dots, \quad & & i_{l+1}, \quad & & a_{l+1}, \quad & i_l & = i. \label{eq-walk-2}
  \end{alignat}
  By convention, we also denote $i_{h} := i_0$.
  We need to show
  \[z_{i_0 i_1} z_{i_1 i_2} \cdots z_{i_{l-1} i_l} = z_{i_h i_{h-1}} z_{i_{h-1} i_{h-2}} \cdots z_{i_{l+1} i_l}.\]
  By definition we have $z_{i_{j-1} i_j} = z_{i_j i_{j-1}}^{-1}$.
  Thus our goal becomes
  \[z_{i_0 i_1} z_{i_1 i_2} \cdots z_{i_{l-1} i_l} z_{i_l i_{l+1}} \cdots z_{i_{h-2} i_{h-1}} z_{i_{h-1} i_h} = 1.\]
  By extending the first undirected walk \eqref{eq-walk-1} by the reverse of \eqref{eq-walk-2}, we have an undirected walk in $G_I$ from $i_0$ to $i_0$,
  \[i_0, \quad a_1, \quad i_1, \quad \dots, \quad a_{l-1}, \quad i_l, \quad a_{l+1}, \quad \dots, \quad i_{h-1}, \quad a_h, \quad i_h = i_0.\]
  By taking $p = h$ in Lemma \ref{lem-prod-zij}, we have that
  \[z_{i_0 i_1} \cdots z_{i_{h-1} i_h} = \frac{\zeta_{i_h, j_2, \dots, j_d}}{\zeta_{i_0, j_2, \dots, j_d}} = 1\]
  for any distinct indices $j_1, \dots, j_d \in I \setminus \{i_0, i_h\}$.
  This equality is what we want.
\end{proof}

By Proposition \ref{prop-zi}, we have a well defined linear map $f: V_1 \to V_2$ by setting
\[f(\alpha_i) := z_i \beta_i, \quad \text{for each } i \in I.\]
Since $z_i \in \mathbb{F}^\times$ and $\{\alpha_i \mid i \in I\}$, $\{\beta_i \mid i \in I\}$ are bases of $V_1$, $V_2$ respectively, the map $f$ is clearly an  isomorphism of vector spaces.
It remains to show that $f$ is a homomorphism of $W$-modules.

\subsection{The map \texorpdfstring{$f$}{f} is a \texorpdfstring{$W$}{W}-isomorphism} \label{subsec-isom}

To show that $f$ is a homomorphism of $W$-modules, it suffices to show
\[f(s_h \cdot \alpha_i) = s_h \cdot f(\alpha_i), \quad \text{for any } h \in [k] \text{ and } i \in I.\]
We split the proof into two parts (Propositions \ref{prop-W-map-1} and \ref{prop-W-map-2}), depending on whether $h \in I$ or $h \notin I$.

\begin{proposition} \label{prop-W-map-1}
  For any $h, i \in I$, we have $f(s_h \cdot \alpha_i) = s_h \cdot f(\alpha_i)$.
\end{proposition}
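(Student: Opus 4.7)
The plan is to verify the identity by splitting on whether $h = i$ or $h \ne i$, and in the latter case to reduce the required equality to a relation about the coefficients $z_i, z_h, x_{ih}, y_{ih}$ that is essentially built into the definition of the $z_i$'s.

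First, I would dispatch the easy case $h = i$. Here $s_i \cdot \alpha_i = \lambda_i \alpha_i$ and $s_i \cdot \beta_i = \mu_i \beta_i$, with $\lambda_i = \mu_i$ by Proposition \ref{prop-num}. Hence both $f(s_i \cdot \alpha_i)$ and $s_i \cdot f(\alpha_i)$ equal $\lambda_i z_i \beta_i$.

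Next assume $h \ne i$. Using Lemma \ref{lem-refl} we expand
\[
f(s_h \cdot \alpha_i) = f(\alpha_i + x_{ih} \alpha_h) = z_i \beta_i + x_{ih} z_h \beta_h,
\]
\[
s_h \cdot f(\alpha_i) = z_i (\beta_i + y_{ih} \beta_h) = z_i \beta_i + z_i y_{ih} \beta_h.
\]
So the identity to establish reduces to $x_{ih} z_h = z_i y_{ih}$. If $x_{ih} = 0$, then $y_{ih} = 0$ by Corollary \ref{cor-x-y} and both sides vanish. If $x_{ih} \ne 0$, then by Corollary \ref{cor-x-y} also $y_{ih} \ne 0$, so $i \to h$ is an arrow of $G = G_1 = G_2$ and, since $i, h \in I$, it is an arrow in $G_I$. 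By the definition of $z_{ih}$ (and using Corollary \ref{cor-same-digraph-2} for its well-definedness) we have $y_{ih}/x_{ih} = z_{ih}$, so it remains to show $z_h = z_i \, z_{ih}$.

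For this last step, pick any undirected walk in $G_I$ from $i_0$ to $i$, say $i_0, a_1, i_1, \dots, a_l, i_l = i$, realizing the definition $z_i = z_{i_0 i_1} \cdots z_{i_{l-1} i_l}$. Appending the arrow $i \to h$ yields an undirected walk in $G_I$ from $i_0$ to $h$, and by the path-independence established in Proposition \ref{prop-zi} we get $z_h = z_{i_0 i_1} \cdots z_{i_{l-1} i_l} \cdot z_{i_l h} = z_i \, z_{ih}$. This closes the argument. The only real content is the observation that Proposition \ref{prop-zi} was set up precisely so that the multiplicative cocycle $z_{\bullet \bullet}$ on arrows of $G_I$ integrates to the function $z_\bullet$ on vertices; I do not anticipate any obstacle beyond carefully tracking this.
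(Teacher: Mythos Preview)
Your proof is correct and follows essentially the same route as the paper: expand both sides, reduce to $x_{ih} z_h = z_i y_{ih}$, dispose of the case $x_{ih}=0$ via Corollary~\ref{cor-x-y}, and in the remaining case extend an undirected walk from $i_0$ to $i$ by the arrow $i \to h$ to obtain $z_h = z_i z_{ih}$. The only difference is that you explicitly single out the case $h = i$ (using $\lambda_i = \mu_i$ from Proposition~\ref{prop-num}), whereas the paper's write-up tacitly assumes $h \ne i$; your extra care here is harmless and arguably tidier.
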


\begin{proof}
  In this case we have
  \[f(s_h \cdot \alpha_i) = f (\alpha_i + x_{ih} \alpha_h) = z_i \beta_i + x_{ih} z_h \beta_h,\]
  and
  \[s_h \cdot f(\alpha_i) = s_h \cdot (z_i \beta_i) = z_i \beta_i + z_i y_{ih} \beta_h.\]
  Therefore, we need to show
  \begin{equation}\label{eq-W-map-1}
    x_{ih} z_h = z_i y_{ih}.
  \end{equation}

  If $x_{ih} = 0$, then $y_{ih} = 0$ by Corollary \ref{cor-x-y}, and thus Equation \eqref{eq-W-map-1} holds trivially.
  Suppose otherwise that $x_{ih} \ne 0$.
  Then $i \to h$ is an arrow.
  Recall that the coefficient $z_i$ is computed by taking arbitrarily an undirected walk in $G_I$ from the fixed $i_0$ ($\in I$) to $i$, say,
  \begin{equation*}
    i_0, \quad  a_1, \quad i_1, \quad  \dots, \quad  a_l, \quad   i_l  = i.
  \end{equation*}
  If we set $i_{l+1} = h$ and $a_{l+1} = i \to h$, then the extended undirected walk
  \begin{equation*}
    i_0, \quad  a_1, \quad  i_1, \quad  \dots, \quad  a_l, \quad i_l = i, \quad a_{l+1}, \quad i_{l+1} = h
  \end{equation*}
  goes from $i_0$ to $h$.
  Therefore, by the definitions of $z_i$ and $z_{ih}$ in Subsection \ref{subsec-f}, we have $z_h = z_i z_{ih} = z_i y_{ih} / x_{ih}$  which is exactly Equation \eqref{eq-W-map-1}.
\end{proof}

\begin{proposition} \label{prop-W-map-2}
  For any $i \in I$ and $h \in [k] \setminus I$, we have $f(s_h \cdot \alpha_i) = s_h \cdot f(\alpha_i)$.
\end{proposition}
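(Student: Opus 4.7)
The plan is to mirror the structure of Proposition \ref{prop-W-map-1}, but with the essential extra step of first showing that $f(\alpha_h)$ is a well-defined scalar multiple of $\beta_h$, even though $h \notin I$ and so $\alpha_h$ is not one of the chosen basis vectors. As before, expanding out the two sides gives
\[f(s_h\cdot\alpha_i) = z_i\beta_i + x_{ih} f(\alpha_h),\qquad s_h\cdot f(\alpha_i) = z_i\beta_i + z_i y_{ih}\beta_h,\]
so the task reduces to proving $x_{ih} f(\alpha_h) = z_i y_{ih}\beta_h$. The case $x_{ih}=0$ is immediate from Corollary \ref{cor-x-y}, so I would concentrate on $x_{ih}\neq 0$.

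To pin down $f(\alpha_h)$, I would write $\alpha_h = \sum_{j\in I} c_j\alpha_j$ and $\beta_h = \sum_{j\in I} c'_j\beta_j$ in the two bases. For any distinct $i_2,\dots,i_d \in I$ such that $\alpha_h,\alpha_{i_2},\dots,\alpha_{i_d}$ are linearly independent (which also forces $\beta_h,\beta_{i_2},\dots,\beta_{i_d}$ to be linearly independent by Proposition \ref{prop-tran-lin-ind}), apply $\psi$ to both sides of
\[\alpha_h\wedge\alpha_{i_2}\wedge\cdots\wedge\alpha_{i_d} = \sum_{j\in I\setminus\{i_2,\dots,i_d\}} c_j\,\alpha_j\wedge\alpha_{i_2}\wedge\cdots\wedge\alpha_{i_d},\]
expand the right-hand side of $\psi(\alpha_h\wedge\cdots) = \zeta_{h,i_2,\dots,i_d}\beta_h\wedge\cdots$ using $\beta_h = \sum c'_j\beta_j$, and compare coefficients of $\beta_j\wedge\beta_{i_2}\wedge\cdots\wedge\beta_{i_d}$. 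This yields $\zeta_{h,i_2,\dots,i_d}\,c'_j = c_j\,\zeta_{j,i_2,\dots,i_d}$. Combined with Lemma \ref{lem-prod-zij} in the form $\zeta_{j,i_2,\dots,i_d} = z_j\,\zeta_{i_0,i_2,\dots,i_d}$, this becomes
\[\tilde z_h\,c'_j = c_j\,z_j,\qquad \text{where } \tilde z_h := \zeta_{h,i_2,\dots,i_d}/\zeta_{i_0,i_2,\dots,i_d}.\]
The quantity $\tilde z_h$ must be shown to be independent of the auxiliary indices $(i_2,\dots,i_d)$, which follows by applying the above relation to two different tuples and using the existence of an index $j\in I$ with $c_j z_j \neq 0$ (which exists because $\alpha_h\neq 0$). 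Summing over $j$ then gives the desired identity $f(\alpha_h) = \sum_j c_j z_j \beta_j = \tilde z_h \sum_j c'_j \beta_j = \tilde z_h\beta_h$.

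With $f(\alpha_h) = \tilde z_h\beta_h$ in hand, the proof is finished by applying Lemma \ref{lem-same-digraph-2} directly to the pair $(i,h)$ (after choosing $i_2,\dots,i_d$ as in Lemma \ref{lem-same-digraph}): the lemma gives $\zeta_{i,i_2,\dots,i_d}\,y_{ih} = \zeta_{h,i_2,\dots,i_d}\,x_{ih}$, whence
\[\frac{y_{ih}}{x_{ih}} = \frac{\zeta_{h,i_2,\dots,i_d}}{\zeta_{i,i_2,\dots,i_d}} = \frac{\zeta_{h,i_2,\dots,i_d}/\zeta_{i_0,i_2,\dots,i_d}}{\zeta_{i,i_2,\dots,i_d}/\zeta_{i_0,i_2,\dots,i_d}} = \frac{\tilde z_h}{z_i},\]
i.e.\ $x_{ih}\tilde z_h = z_i y_{ih}$, which is precisely what is needed.

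The main technical obstacle is the well-definedness of $\tilde z_h$, because although the coefficients $c_j, c'_j$ depend on $h$, the relation $\tilde z_h c'_j = c_j z_j$ has to hold with the \emph{same} $\tilde z_h$ for every $j\in I$; the proof rests on being free to vary $(i_2,\dots,i_d)$ so that every relevant index $j$ is covered, together with the fact that at least one of the $c_j$'s is nonzero. Once that consistency is established, the final numerical identity is a one-line consequence of Lemma \ref{lem-same-digraph-2} and Lemma \ref{lem-prod-zij}.
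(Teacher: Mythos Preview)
Your approach is correct and uses the same ingredients as the paper (your coefficient comparison is precisely the paper's Lemma~\ref{lem-W-map}), but you organize it around the intermediate claim $f(\alpha_h)=\tilde z_h\beta_h$, whereas the paper never isolates such a scalar: for each $j\in I$ it directly verifies $x_{ih}a_jz_j=y_{ih}b_jz_i$ by choosing a single tuple $i_2,\dots,i_d\in I\setminus\{i,j\}$ and multiplying three ratios coming from Lemmas~\ref{lem-prod-zij}, \ref{lem-same-digraph-2}, and~\ref{lem-W-map}. Your framing is more conceptual, but the well-definedness of $\tilde z_h$ needs more care than you indicate. Fixing one $j$ with $c_j\ne 0$ only handles tuples avoiding both $j$ and $i_0$; when $d=n-1$ no tuple avoids three prescribed indices, so two candidate values $c_lz_l/c'_l$ and $c_{l'}z_{l'}/c'_{l'}$ cannot be compared via a common witness. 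This is fixable (take $T=I\setminus\{l,l'\}$ and apply Lemma~\ref{lem-prod-zij} to a walk from $l'$ to $l$), but the paper's organization sidesteps the issue entirely. You also do not address the implication $c_j=0\Rightarrow c'_j=0$, which the paper handles separately via Proposition~\ref{prop-corr-basis}.
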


The rest of this subsection is devoted to proving Proposition \ref{prop-W-map-2}.
Since $\{\alpha_l \mid l \in I\}$ is a basis for $V_1$, and so is $\{\beta_l \mid l \in I\}$ for $V_2$ by Proposition \ref{prop-corr-basis}, we write
\[\alpha_h = \sum_{l \in I} a_l \alpha_l, \quad \beta_h = \sum_{l \in I} b_l \beta_l, \quad \text{where } a_l, b_l \in \mathbb{F}.\]
Then we have
\[f(s_h \cdot \alpha_i) = f(\alpha_i + x_{ih} \alpha_h) = f\Bigl( \alpha_i + x_{ih} \sum_{l \in I} a_l \alpha_l \Bigr) = z_i \beta_i + \sum_{l \in I } x_{ih} a_l z_l \beta_l \]
and
\[s_h \cdot f(\alpha_i) = s_h \cdot (z_i \beta_i) = z_i \beta_i + z_i y_{ih} \beta_h = z_i \beta_i + \sum_{l \in I} z_i y_{ih} b_l \beta_l.\]
Therefore, to prove Proposition \ref{prop-W-map-2} it suffices to show for any $j \in I$ that
\begin{equation}\label{eq-W-map-2}
  x_{ih} a_j z_j = y_{ih} b_j z_i.
\end{equation}
For this, we need the following lemma.

\begin{lemma} \label{lem-W-map}
  Suppose $h \in [k] \setminus I$, $j \in I$.
  We write $\alpha_h = \sum_{l \in I} a_l \alpha_l$, $\beta_h = \sum_{l \in I} b_l \beta_l$ as above.
  If $a_j \ne 0$, then for any distinct $i_2, \dots, i_d \in I \setminus \{j\}$,
  we have
  \[\zeta_{j, i_2, \dots, i_d} a_j = \zeta_{h, i_2, \dots, i_d} b_j.\]
\end{lemma}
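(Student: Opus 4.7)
The plan is to compute $\psi(\alpha_h \wedge \alpha_{i_2} \wedge \dots \wedge \alpha_{i_d})$ in two ways and compare coefficients in the standard basis of $\bigwedge^d V_2$ arising from $\{\beta_l \mid l \in I\}$, which is a basis of $V_2$ by Proposition \ref{prop-corr-basis}.

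First, I would substitute $\alpha_h = \sum_{l \in I} a_l \alpha_l$ inside the wedge. Since $\alpha_l \wedge \alpha_{i_2} \wedge \dots \wedge \alpha_{i_d} = 0$ whenever $l \in \{i_2, \dots, i_d\}$, only the indices $l \in I \setminus \{i_2, \dots, i_d\}$ survive, and this yields
\[\alpha_h \wedge \alpha_{i_2} \wedge \dots \wedge \alpha_{i_d} = \sum_{l \in I \setminus \{i_2, \dots, i_d\}} a_l \, \alpha_l \wedge \alpha_{i_2} \wedge \dots \wedge \alpha_{i_d}.\]
Applying $\psi$ and using the defining equation $\psi(\alpha_l \wedge \alpha_{i_2} \wedge \dots \wedge \alpha_{i_d}) = \zeta_{l, i_2, \dots, i_d}\, \beta_l \wedge \beta_{i_2} \wedge \dots \wedge \beta_{i_d}$ (valid also when the wedge vanishes, by our convention) gives
\[\psi(\alpha_h \wedge \alpha_{i_2} \wedge \dots \wedge \alpha_{i_d}) = \sum_{l \in I \setminus \{i_2, \dots, i_d\}} a_l \, \zeta_{l, i_2, \dots, i_d} \, \beta_l \wedge \beta_{i_2} \wedge \dots \wedge \beta_{i_d}.\]

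On the other hand, by the very definition of $\zeta_{h, i_2, \dots, i_d}$ we have $\psi(\alpha_h \wedge \alpha_{i_2} \wedge \dots \wedge \alpha_{i_d}) = \zeta_{h, i_2, \dots, i_d}\, \beta_h \wedge \beta_{i_2} \wedge \dots \wedge \beta_{i_d}$. Now I would substitute $\beta_h = \sum_{l \in I} b_l \beta_l$ into this expression and again discard the terms with $l \in \{i_2, \dots, i_d\}$, obtaining
\[\psi(\alpha_h \wedge \alpha_{i_2} \wedge \dots \wedge \alpha_{i_d}) = \zeta_{h, i_2, \dots, i_d} \sum_{l \in I \setminus \{i_2, \dots, i_d\}} b_l \, \beta_l \wedge \beta_{i_2} \wedge \dots \wedge \beta_{i_d}.\]

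Since $\{\beta_l \mid l \in I\}$ is a basis of $V_2$, the family $\{\beta_l \wedge \beta_{i_2} \wedge \dots \wedge \beta_{i_d} \mid l \in I \setminus \{i_2, \dots, i_d\}\}$ is part of the standard basis of $\bigwedge^d V_2$ and is in particular linearly independent. Comparing the coefficient of $\beta_j \wedge \beta_{i_2} \wedge \dots \wedge \beta_{i_d}$ in the two expressions immediately yields $a_j \zeta_{j, i_2, \dots, i_d} = \zeta_{h, i_2, \dots, i_d} b_j$. The hypothesis $a_j \ne 0$ is not used in obtaining the equation itself; it is presumably flagged because it is the relevant case in the subsequent deduction of \eqref{eq-W-map-2}. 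No real obstacle arises: the whole argument is just linear algebra in $\bigwedge^d V_2$, and the $W$-equivariance of $\psi$ enters only through the already-established formula for $\psi$ on wedges of $\alpha$'s with independent indices.
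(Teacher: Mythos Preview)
Your proof is correct and is essentially the same as the paper's: both compute $\psi(\alpha_h \wedge \alpha_{i_2} \wedge \dots \wedge \alpha_{i_d})$ in two ways (expanding $\alpha_h$ first versus applying the defining formula for $\zeta_{h,i_2,\dots,i_d}$ and then expanding $\beta_h$) and compare the coefficient of $\beta_j \wedge \beta_{i_2} \wedge \dots \wedge \beta_{i_d}$. One small clarification on your closing remark: the hypothesis $a_j \ne 0$ is in fact invoked in the paper's proof, namely to guarantee that $\alpha_h, \alpha_{i_2}, \dots, \alpha_{i_d}$ are linearly independent and hence that $\zeta_{h,i_2,\dots,i_d} \ne 0$ (so the identity is nontrivial and usable in the later ratio computations); your argument via the convention $\zeta = 0$ when the wedge vanishes shows the displayed equality still holds formally without that hypothesis, but the paper records $a_j \ne 0$ precisely to have both $\zeta$'s nonzero.
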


Note that both $\zeta_{j, i_2, \dots, i_d}$ and $\zeta_{h, i_2, \dots, i_d}$ are nonzero, because both of the two sets $\{\alpha_j, \alpha_{i_2}, \dots, \alpha_{i_d}\}$ and $\{\alpha_h, \alpha_{i_2}, \dots, \alpha_{i_d}\}$ are linearly independent sets.

\begin{proof}
  Since $a_j \ne 0$, the vectors $\alpha_h, \alpha_{i_2}, \dots, \alpha_{i_d}$ are linearly independent.
  We write $I = \{j, i_2, \dots, i_d, i_{d+1}, \dots, i_n\}$.
  Then
  \begin{equation}\label{eq-lem-W-map}
    \alpha_h \wedge \alpha_{i_2} \wedge \dots \wedge \alpha_{i_d} = \Bigl( a_j \alpha_j + \sum_{d+1 \le l \le n} a_{i_l} \alpha_{i_l} \Bigr) \wedge \alpha_{i_2} \wedge \dots \wedge \alpha_{i_d}.
  \end{equation}
  The image under $\psi$ of the left hand side of Equation \eqref{eq-lem-W-map} is
  \begin{align}
    \psi(\alpha_h \wedge \alpha_{i_2} \wedge \dots \wedge \alpha_{i_d}) & = \zeta_{h, i_2, \dots, i_d} \beta_h \wedge \beta_{i_2} \wedge \dots \wedge \beta_{i_d} \notag \\
     & = \zeta_{h, i_2, \dots, i_d} \Bigl( b_j \beta_j + \sum_{d+1 \le l \le n} b_{i_l} \beta_{i_l} \Bigr) \wedge \beta_{i_2} \wedge \dots \wedge \beta_{i_d}. \label{eq-lem-W-map-2}
  \end{align}
  Moreover, the image under $\psi$ of the right hand side of Equation \eqref{eq-lem-W-map} equals
  \begin{equation}\label{eq-lem-W-map-3}
    \zeta_{j, i_2, \dots, i_d} a_j \beta_j \wedge \beta_{i_2} \wedge \dots \wedge \beta_{i_d} + \sum_{d+1 \le l \le n} \zeta_{i_l, i_2, \dots, i_d} a_{i_l} \beta_{i_l} \wedge \beta_{i_2} \wedge \dots \wedge \beta_{i_d}.
  \end{equation}
  The equality of \eqref{eq-lem-W-map-2} and \eqref{eq-lem-W-map-3} gives $\zeta_{j, i_2, \dots, i_d} a_j = \zeta_{h, i_2, \dots, i_d} b_j$.
\end{proof}

Now we are ready to complete the proof of Proposition \ref{prop-W-map-2}.

\begin{proof}[Proof of Proposition \ref{prop-W-map-2}]
  As we mentioned, it suffices to prove Equation \eqref{eq-W-map-2} for any $j \in I$.
  We have three cases.

  \emph{Case one}: $a_j = 0$.
  Then $\{\alpha_h\} \cup \{\alpha_l \mid l \in I \setminus \{j\}\}$ is a linearly dependent set.
  Then $\{\beta_h\} \cup \{\beta_l \mid l \in I \setminus \{j\}\}$ is also linearly dependent.
  Otherwise, it would be a basis for $V_2$, contradicting Proposition \ref{prop-corr-basis}.
  Therefore, we have $b_j = 0$, and hence Equation \eqref{eq-W-map-2} holds trivially.

  \emph{Case two}: $a_j \ne 0$ and $j = i$.
  In this case Equation \eqref{eq-W-map-2} reduces to
  \[x_{jh} a_j = y_{jh} b_j.\]
  If $\alpha_h$ and $\alpha_j$ are proportional, then $\alpha_h, \alpha_{i_2}, \dots, \alpha_{i_d}$ are linearly independent for any distinct $i_2, \dots, i_d \in I \setminus \{j\}$.
  If $\alpha_h$ and $\alpha_j$ are not proportional, then $a_{j'} \ne 0$ for some $j' \in I \setminus \{j\}$.
  There exist $d-1$ distinct indices $i_2, \dots, i_d \in I \setminus \{j, j'\}$ since $d \le n - 1$.
  In both cases, we have by Lemma \ref{lem-same-digraph-2} that
  \[\zeta_{j, i_2, \dots, i_d} y_{jh} = \zeta_{h, i_2, \dots, i_d} x_{jh}.\]
  By Lemma \ref{lem-W-map} we also have
  \[\zeta_{j, i_2, \dots, i_d} a_j = \zeta_{h, i_2, \dots, i_d} b_j.\]
  Therefore $x_{jh} a_j = y_{jh} b_j$ as desired.

  \emph{Case three}: $a_j \ne 0$ and $i \ne j$.
  By the same arguments as in case one, we have $b_j \ne 0$.
  We may further assume $x_{ih} y_{ih} \ne 0$ by Corollary \ref{cor-x-y}, otherwise Equation \eqref{eq-W-map-2} reduces to ``0 = 0''.
  Suppose
  \[i_0, \quad  a_1, \quad i_1, \quad  \dots, \quad i_{p-1}, \quad  a_p, \quad   i_p  = i\]
  is an undirected walk in $G_I$ from $i_0$ to $i$, and
  \[i = i_p, \quad a_{p+1}, \quad i_{p+1}, \quad \dots, \quad i_{q-1}, \quad a_q, \quad i_q = j\]
  is an undirected walk in $G_I$ from $i$ to $j$.
  Their concatenation is an undirected walk from $i_0$ to $j$.
  Note that there exist $d-1$ distinct indices $i_2, \dots, i_d \in I \setminus \{i, j\}$ since $d \le n - 1$.
  Then by definitions of $z_i$ and $z_j$ and Lemma \ref{lem-prod-zij}, we have
  \begin{equation}\label{eq-prop-W-map-1}
    \frac{z_j}{z_i} = z_{i_p i_{p+1}} \cdots z_{i_{q-1} i_q} = \frac{\zeta_{j, i_2, \dots, i_d}}{\zeta_{i, i_2, \dots, i_d}}.
  \end{equation}
  Also note that $\alpha_h, \alpha_i, \alpha_{i_2}, \dots, \alpha_{i_d}$ are linearly independent since $a_j \ne 0$.
  Then by Lemma \ref{lem-same-digraph-2} we have
  \begin{equation}\label{eq-prop-W-map-2}
    \frac{x_{ih}}{y_{ih}} = \frac{\zeta_{i, i_2, \dots, i_d}}{\zeta_{h, i_2, \dots, i_d}}.
  \end{equation}
  Moreover, by Lemma \ref{lem-W-map} we also have
  \begin{equation}\label{eq-prop-W-map-3}
    \frac{a_j}{b_j} = \frac{\zeta_{h, i_2, \dots, i_d}}{\zeta_{j, i_2, \dots, i_d}}.
  \end{equation}
  Multiplying Equations \eqref{eq-prop-W-map-1}, \eqref{eq-prop-W-map-2} and \eqref{eq-prop-W-map-3} together, we obtain
  \[\frac{z_j}{z_i} \cdot \frac{x_{ih}}{y_{ih}} \cdot \frac{a_j}{b_j} = 1.\]
  This is exactly Equation \eqref{eq-W-map-2}.
\end{proof}

By Propositions \ref{prop-W-map-1} and \ref{prop-W-map-2},  $f : V_1 \to V_2$ is a homomorphism of $W$-modules.
We have finished the proof of Theorem \ref{thm-main2}.

\section*{Acknowledgments}

The author is deeply grateful to an anonymous referee for useful suggestions which helped to improve this paper.
The author is supported by the Fundamental Research Funds for the Central Universities.

\section*{Declarations of interest}
The author has no relevant interests to declare.

\bibliographystyle{amsplain}
\bibliography{exterior-powers-II}

\providecommand{\bysame}{\leavevmode\hbox to3em{\hrulefill}\thinspace}
\providecommand{\MR}{\relax\ifhmode\unskip\space\fi MR }
% \MRhref is called by the amsart/book/proc definition of \MR.
\providecommand{\MRhref}[2]{%
  \href{http://www.ams.org/mathscinet-getitem?mr=#1}{#2}
}
\providecommand{\href}[2]{#2}
\begin{thebibliography}{1}

\bibitem{Bourbaki2002}
Nicolas Bourbaki, \emph{{Lie Groups and Lie Algebras. Chapters 4--6}}, Elements
  of Mathematics, Springer-Verlag, Berlin, 2002, Translated from the 1968
  French original by Andrew Pressley.

\bibitem{CIK71}
Charles~W. Curtis, Nagayoshi Iwahori, and Robert~W. Kilmoyer, \emph{Hecke
  algebras and characters of parabolic type of finite groups with {$(B,
  N)$}-pairs}, Inst. Hautes \'{E}tudes Sci. Publ. Math. (1971), no.~40,
  81--116.

\bibitem{FH91}
William Fulton and Joe Harris, \emph{Representation {T}heory. {A} {F}irst
  {C}ourse}, Graduate Texts in Mathematics, vol. 129, Springer-Verlag, New
  York, 1991.

\bibitem{GP00}
Meinolf Geck and G\"{o}tz Pfeiffer, \emph{{Characters of Finite Coxeter Groups
  and Iwahori-Hecke Algebras}}, London Mathematical Society Monographs. New
  Series, vol.~21, The Clarendon Press, Oxford University Press, New York,
  2000.

\bibitem{Henderson10}
Anthony Henderson, \emph{Exterior powers of the reflection representation in
  the cohomology of {S}pringer fibres}, C. R. Math. Acad. Sci. Paris
  \textbf{348} (2010), no.~19-20, 1055--1058.

\bibitem{Hu23-ext-pow}
Hongsheng Hu, \emph{On exterior powers of reflection representations}, Bull.
  Aust. Math. Soc. \textbf{110} (2024), no.~1, 90--102.

\bibitem{Hu23}
\bysame, \emph{Reflection representations of {C}oxeter groups and homology of
  {C}oxeter graphs}, Algebr. Represent. Theory \textbf{27} (2024), no.~1,
  961--994.

\bibitem{Sommers11}
Eric Sommers, \emph{Exterior powers of the reflection representation in
  {S}pringer theory}, Transform. Groups \textbf{16} (2011), no.~3, 889--911.

\bibitem{Steinberg1968}
Robert Steinberg, \emph{{Endomorphisms of Linear Algebraic Groups}}, Memoirs of
  the American Mathematical Society, vol.~80, American Mathematical Society,
  Providence, R.I., 1968.

\end{thebibliography}

\end{document}